
\documentclass[10pt]{article}

\usepackage{amsfonts}
\usepackage{mathrsfs}
\usepackage{amsmath}
\usepackage{amsthm}
\usepackage{amssymb}
\usepackage{latexsym,enumitem}
\usepackage[all]{xy}

\usepackage[square,numbers,sort]{natbib}

\setlength{\oddsidemargin}{0in}
\setlength{\evensidemargin}{0in}
\setlength{\textwidth}{6.5in}
\setlength{\topmargin}{-0.5in}
\setlength{\textheight}{9in}

\numberwithin{equation}{section}

\newtheorem{lemma}{Lemma}[section]

\newtheorem{thrm}{Theorem}[section]
\newtheorem{prop}{Proposition}[section]
\newtheorem{defn}{Definition}[section]
\newtheorem{rmk}{Remark}[section]
\newtheorem{crl}{Corollary}[section]

\newcommand{\nc}{\newcommand}

\nc{\al}{\alpha}
\nc{\eps}{\epsilon}
\nc{\ga}{\gamma}
\nc{\Ga}{\Gamma}
\nc{\ka}{\kappa}
\nc{\la}{\lambda}
\nc{\om}{\omega}
\nc{\si}{\sigma}
\nc{\Si}{\Sigma}
\nc{\bsi}{\boldsymbol\sigma}
\nc{\bSi}{\boldsymbol\Sigma}
\nc{\Ups}{\upsilon}
\nc{\vphi}{\varphi}

\nc{\btau}{\boldsymbol\tau}
\nc{\bdel}{\boldsymbol\delta}

\nc{\id}{\mathrm{id}}
\nc{\gr}{\mathrm{gr}}

\nc{\Ug}{U\mathfrak{g}}
\nc{\Ub}{U\mathfrak{b}}

\nc{\Hk}{\mathsf{H}}
\nc{\ombH}{\overline{\mathbf{H}}}

\nc{\ud}{\underline}
\nc{\tl}{\tilde}

\nc{\mbA}{\mathbf{A}}
\nc{\mbb}{\mathbf{b}}
\nc{\mbB}{\mathbf{B}}
\nc{\mbc}{\mathbf{c}}
\nc{\mbC}{\mathbf{C}}
\nc{\mbd}{\mathbf{d}}
\nc{\mbD}{\mathbf{D}}
\nc{\mbe}{\mathbf{e}}
\nc{\mbE}{\mathbf{E}}
\nc{\mbf}{\mathbf{f}}
\nc{\mbF}{\mathbf{F}}
\nc{\mbg}{\mathbf{g}}
\nc{\mbH}{\mathbf{H}}
\nc{\mbh}{\mathbf{h}}
\nc{\mbi}{\mathbf{i}}
\nc{\mbI}{\mathbf{I}}
\nc{\mbj}{\mathbf{j}}
\nc{\mbJ}{\mathbf{J}}
\nc{\mbk}{\mathbf{k}}
\nc{\mbK}{\mathbf{K}}
\nc{\mbL}{\mathbf{L}}
\nc{\mbM}{\mathbf{M}}
\nc{\mbQ}{\mathbf{Q}}
\nc{\mbq}{\mathbf{q}}
\nc{\mbr}{\mathbf{r}}
\nc{\mbT}{\mathbf{T}}
\nc{\mbu}{\mathbf{u}}
\nc{\mbU}{\mathbf{U}}
\nc{\mbv}{\mathbf{v}}
\nc{\mbV}{\mathbf{V}}
\nc{\mbw}{\mathbf{w}}
\nc{\mbW}{\mathbf{W}}
\nc{\mbX}{\mathbf{X}}
\nc{\mbY}{\mathbf{Y}}
\nc{\mbZ}{\mathbf{Z}}

\nc{\mbbA}{\mathbb{A}}
\nc{\mbbB}{\mathbb{B}}
\nc{\mbbD}{\mathbb{D}}
\nc{\mbbF}{\mathbb{F}}
\nc{\mbbV}{\mathbb{V}}
\nc{\mbbH}{\mathbb{H}}
\nc{\mbbK}{\mathbb{K}}
\nc{\mbbL}{\mathbb{L}}
\nc{\mbbP}{\mathbb{P}}
\nc{\mbbU}{\mathbb{U}}

\nc{\mcA}{\mathcal{A}}
\nc{\mcB}{\mathcal{B}}
\nc{\mcC}{\mathcal{C}}
\nc{\mcD}{\mathcal{D}}
\nc{\mcE}{\mathcal{E}}
\nc{\mcF}{\mathcal{F}}
\nc{\mcG}{\mathcal{G}}
\nc{\mcH}{\mathcal{H}}
\nc{\mcK}{\mathcal{K}}
\nc{\mcN}{\mathcal{N}}
\nc{\mcO}{\mathcal{O}}
\nc{\mcQ}{\mathcal{Q}}
\nc{\mcS}{\mathcal{S}}
\nc{\mcP}{\mathcal{P}}
\nc{\mcU}{\mathcal{U}}
\nc{\mcT}{\mathcal{T}}
\nc{\mcV}{\mathcal{V}}
\nc{\mcW}{\mathcal{W}}
\nc{\mcX}{\mathcal{X}}
\nc{\mcY}{\mathcal{Y}}
\nc{\mcZ}{\mathcal{Z}}

\nc{\mfa}{\mathfrak{a}}
\nc{\mfA}{\mathfrak{A}}
\nc{\mfb}{\mathfrak{b}}
\nc{\mfB}{\mathfrak{B}}
\nc{\mfC}{\mathfrak{C}}
\nc{\mfd}{\mathfrak{d}}
\nc{\mfD}{\mathfrak{D}}
\nc{\mfe}{\mathfrak{e}}
\nc{\mfE}{\mathfrak{E}}
\nc{\mff}{\mathfrak{f}}
\nc{\mfF}{\mathfrak{F}}
\nc{\mfg}{\mathfrak{g}}
\nc{\mfgl}{\mathfrak{g}\mathfrak{l}}
\nc{\mfh}{\mathfrak{h}}
\nc{\mfH}{\mathfrak{H}}
\nc{\mfJ}{\mathfrak{J}}
\nc{\mfk}{\mathfrak{k}}
\nc{\mfK}{\mathfrak{K}}
\nc{\mfl}{\mathfrak{l}}
\nc{\mfL}{\mathfrak{L}}
\nc{\mfM}{\mathfrak{M}}
\nc{\mfm}{\mathfrak{m}}
\nc{\mfn}{\mathfrak{n}}
\nc{\mfN}{\mathfrak{N}}
\nc{\mfo}{\mathfrak{o}}
\nc{\mfP}{\mathfrak{P}}
\nc{\mfQ}{\mathfrak{Q}}
\nc{\mfS}{\mathfrak{S}}
\nc{\mfsl}{\mathfrak{s}\mathfrak{l}}
\nc{\mfso}{\mathfrak{s}\mathfrak{o}}
\nc{\mfsp}{\mathfrak{s}\mathfrak{p}}
\nc{\mft}{\mathfrak{t}}
\nc{\mfU}{\mathfrak{U}}
\nc{\mfu}{\mathfrak{u}}
\nc{\mfUqsl}{\mathfrak{U}_q\mathfrak{sl}}
\nc{\mfUsl}{\mathfrak{Usl}}
\nc{\mfV}{\mathfrak{V}}
\nc{\mfX}{\mathfrak{X}}
\nc{\mfY}{\mathfrak{Y}}
\nc{\mfz}{\mathfrak{z}}

\nc{\mrmd}{\mathrm{d}}

\nc{\sal}{\check{\alpha}}
\nc{\cbeta}{\check{\beta}}
\nc{\cd}{\check{d}}
\nc{\cf}{\check{f}}
\nc{\cdelta}{\check{\delta}}
\nc{\ccr}{\check{r}}
\nc{\cs}{\check{s}}

\nc{\bv}{\breve{v}}

\nc{\tc}{\tilde{c}}
\nc{\tr}{\tilde{r}}
\nc{\ts}{\tilde{s}}
\nc{\tv}{\tilde{v}}

\nc{\msA}{\mathsf{A}}
\nc{\msB}{\mathsf{B}}
\nc{\msC}{\mathsf{C}}
\nc{\msc}{\mathsf{c}}
\nc{\msD}{\mathsf{D}}
\nc{\msd}{\mathsf{d}}
\nc{\mse}{\mathsf{e}}
\nc{\msw}{\mathsf{w}}
\nc{\msq}{\mathsf{q}}
\nc{\msg}{\mathsf{g}}
\nc{\msE}{\mathsf{E}}
\nc{\msf}{\mathsf{f}}
\nc{\msF}{\mathsf{F}}
\nc{\msh}{\mathsf{h}}
\nc{\msk}{\mathsf{k}}
\nc{\msH}{\mathsf{H}}
\nc{\msI}{\mathsf{I}}
\nc{\msJ}{\mathsf{J}}
\nc{\msK}{\mathsf{K}}
\nc{\msL}{\mathsf{L}}
\nc{\msP}{\mathsf{P}}
\nc{\msQ}{\mathsf{Q}}
\nc{\msR}{\mathsf{R}}
\nc{\mss}{\mathsf{s}}
\nc{\msS}{\mathsf{S}}
\nc{\msT}{\mathsf{T}}
\nc{\msU}{\mathsf{U}}
\nc{\msv}{\mathsf{v}}
\nc{\msV}{\mathsf{V}}
\nc{\msx}{\mathsf{x}}
\nc{\msX}{\mathsf{X}}
\nc{\msY}{\mathsf{Y}}
\nc{\msZ}{\mathsf{Z}}

\nc{\End}{\mathrm{End}}
\nc{\Ext}{\mathrm{Ext}}
\nc{\Hom}{\mathrm{Hom}}
\nc{\Ima}{\mathrm{Image}}
\nc{\Ind}{\mathrm{Ind}}
\nc{\Ker}{\mathrm{Ker}}
\nc{\RHom}{\mathrm{RHom}}
\nc{\Sym}{\mathrm{Sym}}

\nc{\mtc}{\mathtt{c}}
\nc{\mtD}{\mathtt{D}}
\nc{\mte}{\mathtt{e}}
\nc{\mtE}{\mathtt{E}}
\nc{\mtf}{\mathtt{f}}
\nc{\mtF}{\mathtt{F}}
\nc{\mth}{\mathtt{h}}
\nc{\mtH}{\mathtt{H}}
\nc{\mtV}{\mathtt{V}}
\nc{\mtX}{\mathtt{X}}
\nc{\mty}{\mathtt{y}}

\nc{\ddeg}{\mathtt{deg}}
\nc{\dimm}{\mathtt{dim}}
\nc{\lmod}{\mathtt{lmod}}
\nc{\opp}{\mathtt{opp}}
\nc{\rmod}{\mathtt{rmod}}

\nc{\mmod}{\mathrm{mod}}
\nc{\nbh}{\mathrm{nbh}}

\nc{\lrh}{\leftrightharpoons}
\nc{\iso}{\stackrel{\sim}{\longrightarrow}}
\nc{\liso}{\stackrel{\sim}{\longleftarrow}}
\nc{\wh}{\widehat}
\nc{\wt}{\widetilde}
\nc{\lra}{\longrightarrow}
\nc{\ra}{\rightarrow}
\nc{\into}{\hookrightarrow}
\nc{\onto}{\twoheadrightarrow}

\nc{\mysum}{\textstyle\sum}
\nc{\mysuml}{\textstyle\sum\limits}

\nc{\C}{\mathbb{C}}
\nc{\N}{\mathbb{N}}
\nc{\Z}{\mathbb{Z}}

\nc{\SW}{\mathsf{SW}}

\nc{\ot}{\otimes}
\nc{\op}{\oplus}
\nc{\ol}{\overline}
\nc{\un}{\underline}

\nc{\lan}{\langle}
\nc{\ran}{\rangle}

\nc{\Xp}{x_{j_1,\ldots,j_p}^{-1}}
\nc{\Xeta}{x_{\eta_1,\ldots,\eta_e}^{-1}}

\nc{\eqa}[1]{\begin{align}#1\end{align}}
\nc{\eqn}[1]{\begin{align*}#1\end{align*}}
\nc{\eq}[1]{\begin{equation}#1\end{equation}}

\usepackage{color}
\usepackage[usenames,dvipsnames]{xcolor}
\nc{\red}{\color{red}}
\nc{\blu}{\color{blue}}
\nc{\br}{\color{Brown}}
\nc\el{\nonumber\\}
\nc\nn{\nonumber}

\nc\bb{\mathbb}

\nc\mf[1]{\mathfrak{#1}}

\nc\Tr{{\rm tr}}

\nc{\sm}[1]{\text{\tiny{\rm #1}}}

\nc\tdeg{\stackrel{\sim}{\smash{\deg}\rule{0pt}{1.1ex}}}

\usepackage{sectsty}
\sectionfont{\large}


\usepackage{titletoc}
\usepackage{tocloft}

\setlength\cftparskip{0em}
\setlength\cftbeforesecskip{0em}
\setlength\cftaftertoctitleskip{1em}

\titlecontents{section}
  [0em]{}{\thecontentslabel\enspace}{}{\titlerule*[1pc]{.}\contentspage}
\titlecontents{subsection}
  [2em]{\thecontentslabel\enspace}{}{}{\titlerule*[1pc]{.}\contentspage}

\setcounter{tocdepth}{2} 

\usepackage{etoolbox}
\apptocmd\normalsize{%
 \abovedisplayskip=6pt plus 3pt minus 3pt
 \abovedisplayshortskip=0pt plus 3pt
 \belowdisplayskip=6pt plus 3pt minus 3pt
 \belowdisplayshortskip=6pt plus 3pt minus 3pt
}{}{}

\begin{document}

\begin{center}
{\Large{\textbf{Twisted Yangians for symmetric pairs of types B, C, D}}} 

\bigskip

Nicolas Guay$^1$, Vidas Regelskis$^2$

\end{center}

\medskip

\begin{flushleft}
$^1$ University of Alberta, Department of Mathematics, CAB 632, Edmonton, \\ AB T6G 2G1, Canada. E-mail: nguay@ualberta.ca \\
$^2$ University of Surrey, Department of Mathematics, Guildford, GU2 7XH, \\ UK. E-mail: v.regelskis@surrey.ac.uk
\end{flushleft}

\smallskip

\begin{abstract}
We study a class of quantized enveloping algebras, called twisted Yangians, associated with the symmetric pairs of types B, C, D in Cartan's classification. These algebras can be regarded as coideal subalgebras of the Yangian for orthogonal or symplectic Lie algebras. They can also be presented as quotients of a reflection algebra by additional symmetry relations. We prove an analogue of the Poincar\'e--Birkoff--Witt Theorem, determine their centres and study also extended reflection algebras. 
\end{abstract}

\smallskip

{\renewcommand{\baselinestretch}{0}
  \tableofcontents
}

\thispagestyle{empty}

\smallskip


\section{Introduction}

Twisted Yangians are some of the most elegant examples of the infinite dimensional reflection algebras introduced by E. Sklyanin in \cite{Sk}. The name twisted Yangian is due to G. Olshanskii, who constructed the first examples of such algebras for symmetric pairs of types AI and AII in \cite{Ol} using the $RTT$-presentation of Yangians \cite{FRT}. It is known that those twisted Yangians can be presented in two different ways: as abstract algebras defined by a reflection equation together with some additional relations, such as symmetry and unitarity relations, or as coideal subalgebras of the Yangian $Y(\mfgl_n)$. Twisted Yangians have also been shown to emerge in Drinfeld's original  presentation of Yangians \cite{DMS}, an approach which allows the construction of generalized (or MacKay) twisted Yangians $Y(\mfg,\mfk)$ for symmetric pairs $(\mfg,\mfk)$ of arbitrary type \cite{Ma1}. Moreover, these twisted Yangians $Y(\mfg,\mfk)$ have been shown to be an integral part of many models of mathematical physics, such as open spin chains, vertex models, non-linear sigma models, and play an important part in quantum field theory; see e.g.\ \cite{Ma2} and references therein. 

The algebraic properties of Yangians of type A and twisted Yangians of types AI and AII (corresponding to the symmetric pairs $(\mfgl_N,\mfso_N)$ and $(\mfgl_N,\mfsp_N)$, or with $\mfgl_N$ replaced by $\mfsl_N$) were thoroughly explored in the survey paper \cite{MNO} by A.~Molev, M.~Nazarov and G.~Olshankii (see also the references therein). The $RTT$-type relation gives the Yangian $Y(\mfgl_N)$, while $Y(\mfsl_N)$ is obtained by setting the quantum determinant of $Y(\mfgl_N)$ equal to the identity. For the case of the twisted Yangians of types AI and AII, the reflection equation (in its twisted form) leads to an extended twisted Yangian; by introducing an additional symmetry relation, the twisted Yangian is recovered. The analogue of the quantum determinant for the twisted Yangian is called the  Sklyanin determinant. Its coefficients generate the whole centre of the twisted Yangian and, by setting it equal to $1$, the special twisted Yangian, which is a coideal subalgebra of $Y(\mfsl_N)$, is obtained.  Finite-dimensional irreducible representations of these algebras were classified in \cite{Mo1} and their skew-representations were explored in \cite{Mo2}. Recent work of S. Khoroshkin and M. Nazarov (e.g. \cite{KhNa1,KhNa2, KhNa3, KNP} and related papers) provides explicit realizations of those representations for the Yangians of type A and for the twisted Yangians of types AI and AII using the theory of Howe dual pairs and Mickelsson algebras. There also exist symmetric pairs of type AIII, namely $(\mfgl_N, \mfgl_p \oplus \mfgl_{N-p})$: the corresponding twisted Yangians were constructed by A. Molev and E. Ragoucy in \cite{MoRa} who related them also to reflection algebras and classified their finite-dimensional irreducible representations. In this case, the reflection equation is used in its regular (non-twisted) form and the role of the symmetry equation is played by the unitarity constraint.

The $q$-analogues of twisted Yangians of types AI and AII were constructed in \cite{MRS} and of type AIII in \cite{CGM}. They can be called either twisted $q$-Yangians or twisted quantum loop algebras. Given a certain involution $\rho$ on $\mfg$, twisted Yangians can be understood as flat deformations of the enveloping algebra of the twisted (half-loop) current Lie algebra $\mfg[x]^\rho$ (see below for its definition), and their $q$-analogues are deformations of the enveloping algebra of the twisted loop Lie algebra $\mfg[x,x^{-1}]^\rho$ (where the involution on $\C[x,x^{-1}]$ is $x\mapsto x^{-1}$). Moreover, the defining relations of these algebras use a slightly different type of reflection equations.  The specialization of quantum loop algebras to Yangians was postulated by Drinfel'd \cite{Dr1}, and was proven in \cite{GTL,GuMa}. The proof relies of the Drinfeld's second presentation of these algebras \cite{Dr3}; however, no analogue of this presentation is known for twisted quantum loop algebras and twisted Yangians, but it is still possible to degenerate twisted quantum loop algebras to twisted Yangians using the $RTT$-presentation in types AI and AII \cite{CoGu}. Closure relations for twisted Yangians in Drinfeld's first presentation for symmetric pairs of general type  were recently demonstrated in \cite{BeRe}. Twisted quantum loop algebras should fit in the more general framework of quantum symmetric pairs for Kac-Moody Lie algebras developed in \cite{Ko}.  This last paper presents a generalization of the work of G.~Letzter \cite{Le}, M.~Noumi and T.~Sugitani on quantum symmetric spaces \cite{NoSu}. 

An $RTT$-presentation of Yangians associated with the classical Lie algebras of types B, C, D is given very explicitly by D.~Arnoudon {\it et al.} in \cite{AACFR}, but the existence of such presentations has been known since the foundational papers \cite{Dr1, Dr2}. It was further explored in \cite{AMR}, where certain isomorphisms between Yangians of low rank were constructed and the finite dimensional irreducible representations were classified. In this case, the $RTT$-type relation defines an extended Yangian $X(\mfg)$, while the Yangian $Y(\mfg)$ is obtained by taking the quotient of $X(\mfg)$ by the ideal generated by all non-scalar central elements.

The goal of this paper is to construct analogues of Olshanskii's twisted Yangians for all symmetric pairs of classical Lie algebras of types B, C, D and to describe fundamental properties of these new  algebras. The symmetric pairs are those given by Cartan's classification of symmetric spaces (see \cite{He}, Chapter X):
$$
\text{BDI: } (\mfso_N,\mfso_p\op\mfso_q),\quad 
\text{CI: } (\mfsp_N,\mfgl_{N/2}), \quad
\text{CII: } (\mfsp_N,\mfsp_p\op\mfsp_q),\quad 
\text{DIII: } (\mfso_N,\mfgl_{N/2}),
$$
where $p+q=N$, and $p$, $q$, $N$ are all even in the CI, CII and DIII cases. For all of these cases, the twisted Yangian can be understood as a quantization of the universal enveloping algebra $\mfU\mfg[x]^\rho$ of the twisted current Lie algebra $\mfg[x]^\rho$ related to the pair $(\mfg,\mfg^\rho)$, where $\rho$ is an involution of $\mfg$ and $\mfg^\rho$ denotes the subalgebra of $\mfg$ fixed by $\rho$. The twisted current algebra $\mfg[x]^\rho$ is defined as the subspace of $\mfg[x]$ consisting of elements fixed by the involution $\rho$ extended to $\mfg[x]$ by $\rho(F p(x)) = \rho(F) p(-x) \; \forall \, F\in\mfg,\, \forall \, p(x) \in \C[x]$.

We also construct twisted Yangians corresponding to trivial symmetric pairs, namely
$$
\text{BCD0: } (\mfg,\mfg) \quad\text{for}\quad \mfg=\mfsp_N \quad\text{and}\quad \mfg=\mfso_N.
$$
In this case, the involution $\rho$ acts trivially on $\mfg$, but is non-trivially extended to the current  Lie algebra, giving $\mfg[x]^\rho=\mfg[x^2]$. Despite the fact that $\mfg[x^2]\cong\mfg[x]$ as a Lie algebra, the quantization of the twisted current algebra $\mfU\mfg[x^2]$ is a twisted Yangian not isomorphic to $Y(\mfg)$. For Lie algebras of type A, the corresponding twisted Yangian can be constructed as in \cite{MoRa} for the extremal case $p=N$ of the symmetric pair of type AIII $(\mfgl_N,\mfgl_{p}\op\mfgl_{N-p})$. For symmetric pairs of types BDI and CII, we can also set $p=N$ and $q=0$. However, as we will see in this paper, there are some important differences between the extremal and non-extremal cases. This is in contrast to type AIII, where all of the twisted Yangians with $p=0,\ldots, N$ obey relations of the same form.  Our twisted Yangians of type BCD0 are very similar to the reflection algebra defined in \cite{IMO} - see Definition 3.1, Proposition 3.2 and the homomorphism (3.44) in \textit{loc.~cit.}

First, we define extended twisted Yangians $X(\mfg,\mcG)^{tw}$ as coideal subalgebras of the extended Yangians $X(\mfg)$, and twisted Yangians $Y(\mfg,\mcG)^{tw}$ as quotients of the extended twisted Yangians by the unitarity constraint. The latter have no non-trivial central elements and are coideal subalgebras of the Yangians $Y(\mfg)$. The construction of these coideal subalgebras is based on a matrix $\mcG(u)$, which is a solution of the reflection equation and is a rational function of the spectral parameter $u$ and the matrix $\mcG$. The corresponding symmetric pair is $(\mfg,\mfg^\rho)$ where $\mfg^\rho=\{X\in\mfg\,|\,X=\mcG X \mcG^{-1}\}$, and all the symmetric pairs of types B, C and D can be obtained this way. Moreover, the form of the rational matrix $\mcG(u)$ coincides with that of rational $K$-matrices of the principal chiral model on the half-line found in \cite{MaSho}. The differences are due to the fact that for a given symmetric pair, the matrix $\mcG$ is not unique. 

We show that $Y(\mfg,\mcG)^{tw}$ is isomorphic to a subalgebra $\wt{Y}(\mfg,\mcG)^{tw}$ of the extended Yangian and this leads to the decomposition $X(\mfg,\mcG)^{tw} \cong ZX(\mfg,\mcG)^{tw} \otimes \wt{Y}(\mfg,\mcG)^{tw}$ where $ZX(\mfg,\mcG)^{tw}$ is the centre of $X(\mfg,\mcG)^{tw}$: see Theorem \ref{T:Yt=Xt^Y}.  We then prove an analogue of the Poincar\'e-Birkoff-Witt Theorem for the twisted Yangians and their extended version (Theorem \ref{Y:PBW}), and determine the centre of $X(\mfg,\mcG)^{tw}$ in Subsection \ref{Sec:34}. We also explain how twisted Yangians provide a quantization of a left Lie coideal structure: see Theorem \ref{T:quan}. 

In Section \ref{S:ra}, we show that twisted Yangians (extended or not) are isomorphic to a class of reflection algebras satisfying additional symmetry and unitarity relations: see Theorems \ref{T:Y(g,G)} and \ref{T:BGdecomp}. In the following section on the quantum contraction, we introduce extended reflection algebras and explain how the symmetry and unitarity relations are equivalent to the vanishing of certain central elements: see Theorems \ref{T:XRES} and \ref{T:d(u)}. These central elements are obtained as coefficients of certain even and odd power series. Similar results were already known for twisted Yangians of types AI and AII \cite{MNO}.

Since $\mfsl_2$ is isomorphic to $\mfso_3$ and $\mfsp_2$, it is natural to ask if our twisted Yangians for $\mfg = \mfso_3$ or $\mfg = \mfsp_2$ are isomorphic to Olshanski's twisted Yangians for $\mfsl_2$: this is indeed the case as proved in \cite{GRW}.

In a future work, we hope to explore $q$-analogues of our twisted Yangians and of the extended Yangians. 

A word of explanation is necessary to clarify the terminology used in this paper. We use the name {\it twisted Yangian} when referring to coideal subalgebras of a {\it Yangian}. We use the name {\it reflection algebra} for algebras defined by a {\it reflection equation}. {\it Twisted Yangians} and {\it reflection algebras} are not isomorphic in general; they become isomorphic by requiring additional (symmetry and/or unitarity) relations to hold.

\smallskip

{\bf Acknowledgements.} The first author acknowledges the support of the Natural Sciences and Engineering Research Council of Canada through its Discovery Grant program.  Part of this work was done during the second author's visits to the University of Alberta. V.R. thanks the University of Alberta for the hospitality, and also gratefully acknowledges the Engineering and Physical Sciences Research Council of the United Kingdom for the Postdoctoral Fellowship under the grant EP/K031805/1.


\section{Extended Yangian}

The extended Yangian $X(\mfg)$ was first introduced in \cite{AACFR} and was studied furthermore in \cite{AMR}. It admits as quotients the standard (untwisted) orthogonal and symplectic Yangians $Y(\mfg)$: see the remark at the end of Section 2 in \cite{AMR}. In this section, we will summarize relevant definitions and results from \textit{loc.~cit.}, to which we refer the reader for detailed explanations.  

Let $n \in\N$ and set $N=2n$ or $N=2n+1$. Then $\mfg$ will denote either the orthogonal Lie algebra $\mfso_N$ or the symplectic Lie algebra $\mfsp_N$ (only when $N=2n$).  These algebras can be realized as Lie subalgebras of $\mfgl_N$ in the following way. Let us label the rows and columns of $\mfgl_N$ by the indices $\{ -n,\ldots,-1,1,\ldots,n\}$ if $N=2n$ and by $\{-n,\ldots,-1,0,1,\ldots,n \}$ if $N=2n+1$. Set $\theta_{ij}=1$ in the orthogonal case $\forall \, i,j$ and $\theta_{ij}=\mathrm{sign}(i)\cdot \mathrm{sign}(j)$ in the symplectic case for $i,j\in\{ \pm 1, \pm 2, \ldots, \pm n  \}$. Let $F_{ij}=E_{ij} - \theta_{ij} E_{-j,-i}$ where $E_{ij}$ is the usual elementary matrix of $\mfgl_N$. Then $\mfg = \mathrm{span}_{\C} \{ F_{ij} \, | \, -n\le i,j\le n \}$. These matrices satisfy the relations 
\eq{ \label{[F,F]}
F_{ij} + \theta_{ij}F_{-j,-i}=0, \qquad [F_{ij},F_{kl}] = \delta_{jk}F_{il} - \delta_{il}F_{kj} + \theta_{ij}\delta_{j,-l}F_{k,-i} - \theta_{ij}\delta_{i,-k}F_{-j,l}.
}

All the tensor products in this paper will be over $\C$, so $\ot = \ot_{\C}$. We need to introduce some operators: $P \in \End\,\C^N \ot\End\,\C^N$ will denote the permutation operator on $\C^N \ot \C^N$, and $Q$ will denote the transposed projector on $\C^N \ot \C^N$, so
\eq{ \label{PQ}
P=\mysum_{i,j=-n}^n E_{ij} \ot E_{ji}, \qquad Q = \mysum_{i,j=-n}^n \theta_{ij} E_{ij} \ot E_{-i,-j} .
}
The operator $Q$ is obtained from $P$ by taking the transpose of either the first or the second matrix, namely $Q=P^{t_1}=P^{t_2}$, the transpose $t$ being the one with respect to the bilinear form on $\C^N$ given by $(u,v) = u' \mcB v$ where $\mcB$ is the matrix with entries $b_{ij}=\mathrm{sign}(i)\,\delta_{i,-j}$ in the symplectic case and $b_{ij}=\delta_{i,-j}$ in the orthogonal case, and the primed notation $u'$ denotes the usual matrix transposition. The transposition $t$ acts on the basis elements by the rule $(E_{ij})^t = \theta_{ij} E_{-j,-i}$. Let $I$ denote the identity matrix. Then $P^2=I$, and also $PQ=QP=\pm Q$ and $Q^2=N Q$, which will be useful below. Here (and further in this paper) the upper sign corresponds to the orthogonal case and the lower sign to the symplectic case. 

Set $\ka=N/2\mp 1$. The $R$-matrix $R(u)$ that we will need is defined by:
\eq{
R(u) = I - \frac{P}{u} + \frac{Q}{u-\ka}.  \label{R(u)}
}
It is a solution of the quantum Yang-Baxter equation with spectral parameter,
\eq{ \label{YBE}
R_{12}(u)\, R_{13}(u+v)\, R_{23}(v) = R_{23}(v)\, R_{13}(u+v)\, R_{12}(u).
} 
We borrowed the matrix $R(u)$ from \cite{AACFR}, but it actually appeared earlier in \cite{ZaZa} and \cite{KuSk}.

\begin{defn} [\cite{AACFR,AMR}]
The extended Yangian $X(\mfg)$ is the associative $\C$-algebra with generators $t_{ij}^{(r)}$ for $-n\le i,j\le n$ and $r\in\Z_{\ge 0}$, which satisfy the following relations: 
\eq{ \label{RTT}
R(u-v)\,T_1(u)\,T_2(v) = T_2(v)\,T_1(u)\,R(u-v), 
}
where $T_1(u)$ and $T_2(u)$ are the elements of $ \End\,\C^N \ot \End\,\C^N \ot X(\mfg)[[u^{-1}]] $ given by
\eq{
T_1(u) = \mysum_{i,j=-n}^n E_{ij} \ot 1 \ot t_{ij}(u)  , \qquad
T_2(u) = \mysum_{i,j=-n}^n 1 \ot E_{ij} \ot t_{ij}(u) , \nn
}
with the formal power series given by 
$$
t_{ij}(u) = \mysum_{r=0}^{\infty} t_{ij}^{(r)}\, u^{-r} \in X(\mfg)[[u^{-1}]], \qquad
t_{ij}^{(0)} = \delta_{ij}.
$$
In terms of the power series elements $t_{ij}(u)$, the defining relations are
\eqa{ \label{[t,t]}
[\, t_{ij}(u),t_{kl}(v)]&=\frac{1}{u-v}
\Big(t_{kj}(u)\, t_{il}(v)-t_{kj}(v)\, t_{il}(u)\Big)\el
{}&-\frac{1}{u-v-\ka}
\mysuml_{a=-n}^n\Big(\delta_{k,-i}\,\theta_{ia}\, t_{aj}(u)\, t_{-a,l}(v)-
\delta_{l,-j}\,\theta_{ja}\, t_{k,-a}(v)\, t_{ia}(u)\Big). 
}
The Hopf algebra structure of $X(\mfg)$ is given by 
\eq{ \label{Hopf X(g)}
\Delta: t_{ij}(u)\mapsto \mysum_{k=-n}^n t_{ik}(u)\ot t_{kj}(u), \qquad S: T(u)\mapsto T^{-1}(u),\qquad \epsilon: T(u)\mapsto I. 
}
\end{defn}

\begin{lemma} [{\cite[Proposition 3.11]{AMR}}]
There exists an embedding $\mfU\mfg \into X(\mfg)$ of the enveloping algebra of $\mfg$ into the extended Yangian given by $F_{ij} \mapsto \frac{1}{2} (t_{ij}^{(1)} - \theta_{ij} t_{-j,-i}^{(1)})$.
\end{lemma}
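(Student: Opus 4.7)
My plan is to show that the assignment $\varphi\colon F_{ij}\mapsto \tilde F_{ij}:=\tfrac{1}{2}(t_{ij}^{(1)}-\theta_{ij}t_{-j,-i}^{(1)})$ extends to a well-defined algebra map $\mfU\mfg\to X(\mfg)$ and is injective. I would carry this out in three steps.

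First, I would verify the symmetry relation $\tilde F_{ij}+\theta_{ij}\tilde F_{-j,-i}=0$. This is a one-line computation using $\theta_{-j,-i}=\theta_{ij}$ (true in both cases, since $\theta_{ij}=1$ orthogonally and $\theta_{ij}=\mathrm{sign}(i)\,\mathrm{sign}(j)$ symplectically) together with $\theta_{ij}^{2}=1$.

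Next, I would verify the bracket relation by extracting the coefficient of $u^{-1}v^{-1}$ from \eqref{[t,t]}. The LHS gives $[t_{ij}^{(1)},t_{kl}^{(1)}]$. On the RHS, the elementary identity $\tfrac{v^{-1}-u^{-1}}{u-v}=u^{-1}v^{-1}$ produces $\delta_{jk}t_{il}^{(1)}-\delta_{il}t_{kj}^{(1)}$, while the expansion $\tfrac{1}{u-v-\ka}=u^{-1}+O(u^{-2})$ shows that only the leading $u^{-1}$ piece contributes at this order, so the $\ka$-correction drops out; collapsing the sum over $a$ via the Kronecker deltas yields $\theta_{ij}\delta_{j,-l}t_{k,-i}^{(1)}-\theta_{ij}\delta_{i,-k}t_{-j,l}^{(1)}$. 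Hence, writing $e_{ij}:=t_{ij}^{(1)}$, I recover exactly the defining relation \eqref{[F,F]} with $F$ replaced by $e$. Because the right-hand side of \eqref{[F,F]} is equivariant under the involution $X_{ij}\mapsto -\theta_{ij}X_{-j,-i}$ applied to both arguments of the bracket, averaging this relation for $e_{ij}$ promotes it to the symmetrized generators $\tilde F_{ij}$. Combined with Step~1, this shows $\varphi$ is a well-defined algebra homomorphism.

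For injectivity, I would filter $X(\mfg)$ by $\deg t_{ij}^{(r)}=r-1$ and invoke the PBW-type theorem for $X(\mfg)$ from \cite{AMR}: in the associated graded, the $\tilde F_{ij}$ (ranging over a transversal of the symmetry relations) are linearly independent in degree $0$ and span a copy of $\mfg$ there, so the images of any PBW-ordered monomial basis of $\mfU\mfg$ remain linearly independent in $\gr X(\mfg)$, whence $\varphi$ is injective.

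The main obstacle is Step~2, where the computational heart is the bookkeeping of signs, Kronecker deltas, and the sum over $a$ in the $Q$-contribution: one must carefully confirm that the $\ka$-dependent terms genuinely cancel at order $u^{-1}v^{-1}$ and that the $\theta$-factors correctly produce the signs of \eqref{[F,F]}, in particular keeping track of $\theta_{ij}$ versus $\theta_{-j,-i}$. Once the relation for the $e_{ij}$ has been matched verbatim with \eqref{[F,F]}, the symmetrization step is formal and the filtered PBW argument for injectivity is standard.
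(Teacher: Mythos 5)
The paper offers no proof of this lemma --- it is quoted verbatim from \cite[Proposition 3.11]{AMR} --- so there is no internal argument to compare against; your direct verification is the standard one and it is essentially correct. Steps 1 and 3 are fine: the coefficient extraction does give $[t_{ij}^{(1)},t_{kl}^{(1)}]=\delta_{jk}t_{il}^{(1)}-\delta_{il}t_{kj}^{(1)}+\theta_{ij}\delta_{j,-l}t_{k,-i}^{(1)}-\theta_{ij}\delta_{i,-k}t_{-j,l}^{(1)}$ (the constant terms of both numerators in \eqref{[t,t]} vanish and only the leading $u^{-1}$ of each kernel can contribute to $u^{-1}v^{-1}$, so $\ka$ indeed drops out), and injectivity follows since $\tfrac12(t_{ij}^{(1)}-\theta_{ij}t_{-j,-i}^{(1)})=\tau_{ij}^{(1)}$ and ordered monomials in the $\tau_{ij}^{(1)}$ are part of the PBW basis of $\wt{Y}(\mfg)$ from \cite{AMR}. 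The one step you should not leave as an assertion is the ``equivariance'' used to pass from $e_{ij}=t_{ij}^{(1)}$ to the symmetrized $\tilde F_{ij}$. Writing $\beta\bigl((i,j),(k,l)\bigr)$ for the right-hand side of \eqref{[F,F]} in the letters $e$, the substitution $(i,j)\mapsto(-j,-i)$ weighted by $-\theta_{ij}$ behaves differently in the two slots: in the first slot it returns $\beta$ unchanged, while in the second slot it returns $\sigma\circ\beta$, where $\sigma(e_{pq})=-\theta_{pq}e_{-q,-p}$ (note $\beta$ is not even formally antisymmetric in its two arguments, only modulo the linear relations among the $e$'s, so the two slots genuinely cannot be treated symmetrically). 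Averaging over the four substitutions therefore yields $\tfrac12(1+\sigma)\beta$, and since $\tfrac12(1+\sigma)e_{pq}=\tilde F_{pq}$ this is exactly the required relation for the $\tilde F$'s --- so the averaging does work, but only after these two slot-by-slot identities are checked; as phrased, your ``equivariant in both arguments'' is slightly inaccurate and hides the computation that makes the step legitimate.
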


\begin{rmk} [\cite{AMR}] 
If $i\neq j$, then $t_{ij}^{(1)} = -\theta_{ij} t_{-j,-i}^{(1)}$, so the embedding sends $F_{ij}$ to $t_{ij}^{(1)}$. However, $t^{(1)}_{ii} = z_1 - t_{-i,-i}^{(1)}$ where $z_1$ is a certain central element in $X(\mfg)$, so the previous embedding maps \mbox{$F_{ii}$ to $t_{ii}^{(1)} \!-\! \frac{z_1}{2}$.} 
\end{rmk}

Next, we will state some properties of $X(\mfg)$ that we will require in further sections. Consider an arbitrary formal series $f(u)$ of the form
\eq{
f(u)=1+f_1u^{-1}+f_2 u^{-2}+\cdots\in \C[[u^{-1}]]. \nn
}
Let $a\in\C$ be an arbitrary constant and let $A$ be a matrix with entries in $\C$ such that $AA^t=1$. Then each of the maps in the first line below defines an automorphism of $X(\mfg)$ and each map in the second line defines an anti-automorphism:
\eqa{
\mu_f:T(u)&\mapsto f(u)\, T(u), &\tau_a:T(u)&\mapsto T(u-a), &\al_A:T(u)&\mapsto A\, T(u) A^{t}, \label{Aut}\\
T(u)&\mapsto T(-u), & T(u)&\mapsto T^{t}(u), &T(u)&\mapsto T^{-1}(u). \label{Anti-Aut}
}
This is verified with the use of the following property of the $R$-matrix:
\eq{ \label{R(u)R(-u)}
R(u)\, R(-u)=(1-u^{-2})\cdot I,
}
and the fact that $R(u)$ is stable under the composition of the transpositions in the first and the second copies of $\End\,\C^N$: $R^{t_1t_2}(u)=R(u)$.

Let $ZX(\mfg)$ denote the centre of $X(\mfg)$. Multiply both sides of \eqref{RTT} by $u-v-\ka$ and set $u=v+\ka$. Then upon replacing $v$ by $u$ one obtains $Q\,T_1(u+\ka)\,T_2(u) = T_2(u)\,T_1(u+\ka)\,Q$. Recall that $N^{-1}Q$ is a projection operator to a one-dimensional subspace of $\C^N\ot\C^N$. Thus the expression above must be equal to $Q$ times a formal power series $z(u)$. Using the definition of $Q$, one deduces that $Q\,T_1(u)=Q\,T_2^t(u)$ and $T_1(u)\,Q=T_2^t(u)\,Q$. From this, one can show that
\eq{
T^{t}(u+\ka)\,T(u) = T(u)\,T^t(u+\ka) = z(u)\cdot I, \label{TT}
}
where $z(u)=1+\mysum_{i\geq1} z_i\,u^{-i}$ is called the quantum contraction of the matrix $T(u)$; its coefficients $z_i$ generate the centre $ZX(\mfg)$ of $X(\mfg)$. This leads to the following tensor product decomposition of $X(\mfg)$ \cite[Theorem~3.1]{AMR}:
\eqa{
X(\mfg) = ZX(\mfg)\ot Y(\mfg) , \label{X=ZX*Y}
}
where $Y(\mfg)$ is the Yangian of $\mfg$. $Y(\mfg)$ is thus isomorphic to the quotient of $X(\mfg)$ by the ideal generated by the central elements $z_i$, that is, $Y(\mfg)\cong X(\mfg)/(z(u)-1)$. It is also isomorphic to the subalgebra of $X(\mfg)$ stable under all the automorphisms $\mu_f$. Let us give a few more details which will be relevant later.

Let $y(u)$ be the unique series such that $z(u)=y(u)\,y(u+\ka)$. By \eqref{TT} the automorphism $\mu_f$ takes $y(u)$ to $f(u)\,y(u)$. The Yangian $Y(\mfg)$ (\cite{AMR}, Corollary 3.2) may be alternatively defined as the subalgebra of $X(\mfg)$ stable under all the automorphisms $\mu_f$ given in \eqref{Aut}, i.e.\  as the subalgebra $\wt{Y}(\mfg)$ generated by the coefficients $\tau^{(r)}_{ij}$ of the series $\tau_{ij}(u) = y^{-1}(u)\,t_{ij}(u)$ with $-n\le i,j\le n$ and $r\in\Z_{\ge0}$.

The generators $\tau_{ij}^{(r)}$ of  $\wt{Y}(\mfg)$ satisfy the relations \eqref{[t,t]} with $t_{ij}(u)$ replaced by $\tau_{ij}(u)$ and the additional relation
\eq{ \label{tt}
\mysum_{a=-n}^n \theta_{ak}\, \tau_{-a,-k}(u+\kappa)\,\tau_{al}(u) = \delta_{kl}.
}
We can also express these as:
\eq{
T(u) = y(u)\,\mcT(u) , \qquad \mcT(u)\,\mcT^t(u+\ka) = \mcT^t(u+\ka)\,\mcT(u) = I \label{cTT}
}  
where $\mcT(u)$ is the matrix with entries $\tau_{ij}(u)$.

\section{Twisted Yangians}

The twisted Yangians of types AI and AII, corresponding to the symmetric pairs $(\mfgl_n,\mfso_n)$, and $(\mfgl_N,\mfsp_N)$ and the twisted reflection equation were first introduced by G. Olshanskii in \cite{Ol} and have been studied extensively over the past twenty years (see e.g.\ \cite{MNO} for a pedagogic exposition). Those of type AIII were first investigated in \cite{MoRa} where they were called reflection algebras since they can be defined using the non-twisted reflection equation, and their twisted quantum loop analogues were introduced in \cite{CGM}. In this section, we introduce new twisted Yangians for the classical Lie algebras of types B, C and D: they are in bijection with the symmetric pairs of types BDI, CI, CII and DIII. This notation refers to Cartan's classification of symmetric spaces. We also introduce twisted Yangians BCD0 of even levels that are analogues of the even loop twisted Yangians of \cite{BeRe} and the reflection algebras $\mcB(n,0)$ of \cite{MoRa}.

\subsection{Symmetric pairs of types B, C, D}

The symmetric pairs we are interested in are of the form $(\mfg,\mfg^{\rho})$ where $\rho$ is an involutive automorphism of $\mfg$ given by $\mathrm{Ad}(\mcG)$ where $\mcG\in G$ or $\sqrt{-1}\mcG \in G$ and \[ G = \{ A\in SL_N(\C) \, | \, A^{-1} =  A^t   \} \text{ and } \mfg = \{ X \in \mfsl_N \, | \, X + X^t=0 \}.  \] The fixed-point subalgebra $\mfg^{\rho}$ is given by $\mfg^{\rho} = \{ X\in\mfg \, | \, X = \mcG X \mcG^{-1} \} = \mathrm{span}\{ X + \mcG X \mcG^{-1} \, | \, X\in\mfg \}$. We will denote by $\check{\mfg}^{\rho}$ the eigenspace of eigenvalue $-1$ of $\rho$ and by $g_{ij}$ the entries of $\mcG$. The matrix $\mcG$ is not unique, but $\mfg^{\mathrm{Ad}(\mcG_1)} \cong  \mfg^{\mathrm{Ad}(\mcG_2)}$ implies that $\mcG_1$ and $\mcG_2$ are, up to a central element, conjugate to each other under $G$ as explained below, except in type $D$ where $O_N(\C)$ has to be considered instead of $SO_N(\C)$. 

\smallskip

Let us consider each symmetric pair and one or two choices for the matrix $\mcG$:
\vspace{-1ex}
\begin{itemize} [itemsep=0ex]

\item BCD0\,: $\mcG=I_N$, $\rho$ is trivial and $\mfg^{\rho} = \mfg$.

\item CI{\phantom{IIII}} : $N$ is even, $\mfg=\mfsp_N$, $\mcG=\mysum_{i=1}^{\frac{N}{2}} (E_{ii} - E_{-i,-i})$ and  $\mfg^{\rho} \cong \mfgl_{\frac{N}{2}}$. In this case, it is $\sqrt{-1}\mcG\in G$.

\item DIII{\phantom{II}} : $N$ is even, $\mfg=\mfso_N$, $\mcG=\mysum_{i=1}^{\frac{N}{2}} (E_{ii} - E_{-i,-i})$ and  $\mfg^{\rho} \cong \mfgl_{\frac{N}{2}}$. In this case, it is $\sqrt{-1}\mcG\in G$.

\item CII{\phantom{IIII}}:  $N$, $p$ and $q$ are even and $>0$, $N=p+q$, $\mfg=\mfsp_N$, 
\[ 
\mcG= -\mysum_{i=1}^{\frac{q}{2}} (E_{ii} + E_{-i,-i}) + \mysum_{i=\frac{q}{2}+1}^{\frac{N}{2}} (E_{ii} + E_{-i,-i}) 
\] 
and  $\mfg^{\rho}= \mfsp_p \op \mfsp_q$. More precisely, the subalgebra of $\mfg^{\rho}$ spanned by $F_{ij}$ with $-\frac{q}{2} \le i,j \le \frac{q}{2}$ is isomorphic to $\mfsp_q$ and the subalgebra of $\mfg^{\rho}$ spanned by $F_{ij}$ with $|i|,|j| > \frac{q}{2}$ is isomorphic to $\mfsp_p$. 

\item BDI{\phantom{II}} : $\mfg=\mfso_N$, $\mfg^{\rho}= \mfso_p \op \mfso_q$ where $p>q>0$ if $N$ is odd, and $p\geq q>0$ if $N$ is even. (If $q=1$, then $\mfso_q$ is the zero Lie algebra.)
When $N$ is even, $p$ and $q$ have the same parity and $\mcG$ is given by 
\[ 
\mcG=\mysum_{i=1}^{\frac{p-q}{2}} (E_{ii} + E_{-i,-i}) + \mysum_{i=\frac{p-q}{2}+1}^{\frac{N}{2} } (E_{-i,i} + E_{i,-i}). 
\] 
When $N$ is odd, $p-q$ is odd and 
\[ 
\mcG=\mysum_{i=-\frac{p-q-1}{2}}^{\frac{p-q-1}{2}} E_{ii}
 + \mysum_{i=\frac{p-q+1}{2}}^{\frac{N-1}{2}} (E_{-i,i} + E_{i,-i}). 
 \] 

\end{itemize}

To see that $\mfg^{\rho} \cong \mfso_p \op \mfso_q$, we will adopt the more common point of view on $\mfso_N$, namely that it is isomorphic to the the Lie algebra of matrices in $\mfsl_N$ skew-symmetric with respect to the main diagonal. 

Let $\wt{\mfso}_N = \{ X\in\mfsl_N \, | \, X= - X' \}$: here, $X'$ is the standard transpose of $X$ with respect to its main diagonal. Let $C$ be the matrix with non zero-entries given by $c_{ii} = -\frac{\sqrt{-1}}{\sqrt{2}}, c_{-i,-i} =\frac{1}{\sqrt{2}}, c_{-i,i}=\frac{\sqrt{-1}}{\sqrt{2}}, c_{i,-i} =\frac{1}{\sqrt{2}}$ for $1\le i \le \frac{N}{2}$ if $N$ is even and for $1\le i \le \frac{N-1}{2}$ if $N$ is odd; in the latter case, we also set $c_{00}=1$. Then $CC' = K$ where $K$ is the antidiagonal matrix with entries $k_{ij} = \delta_{i,-j}$. An isomorphism $\varphi: \wt{\mfso}_N \lra \mfg$ is given by $\varphi(X) = C X C^{-1}$. Indeed, if $X=-X'$, then $-\varphi(X)^t = -K(C X C^{-1})' K = - K (C^{-1})' X' C' K = C X C^{-1} = \varphi(X)$, so $\varphi(X)\in\mfg$. 

If $N$ is even and $p> q$, we let $\wt{\mcG}$ be the diagonal matrix with entries $\wt{g}_{ii}=1$ for $-\frac{N}{2}\le i\le p-\frac{N}{2}$ and $\wt{g}_{ii}=-1$ for $p-\frac{N}{2}+1\le i\le \frac{N}{2}$. If $N$ is even and $p=q=\frac{N}{2}$, we let $\wt{\mcG}$ be the diagonal matrix with entries $\wt{g}_{ii}=1$ for $i<0$ and $\wt{g}_{ii}=-1$ for $i>0$. If $N$ is odd and $p> q$, we let $\wt{\mcG}$ be the diagonal matrix with entries $\wt{g}_{ii}=1$ for $-\frac{N-1}{2}\le i\le p-\frac{N+1}{2}$ and $\wt{g}_{ii}=-1$ for $p-\frac{N-1}{2}\le i\le \frac{N-1}{2}$.  Conjugation by $\wt{\mcG}$ defines an automorphism $\wt{\rho}$ of $\wt{\mfso}_N$ with fixed-point subalgebra isomorphic to $\mfso_p \op \mfso_q$. Using $\varphi$, we can transport it to an automorphism $\rho$ of $\mfg$: $\rho(X) = (\varphi \circ \wt{\rho} \circ\varphi^{-1})(X) = (C\wt{\mcG}C^{-1})X(C\wt{\mcG}C^{-1})^{-1}$. Observe that $\mcG = C\wt{\mcG}C^{-1}$ with $\mcG$ as given above, and $\rho(X) = \mcG X\mcG^{-1}$: this proves that $\mfg^{\rho} \cong \mfso_p \op \mfso_q$ because $\mfg^{\rho}$ is isomorphic via $\varphi$ to $\wt{\mfso}_N^{\wt{\rho}}$. 

When $N$, $p$ and $q$ are even, another possibility for $\mcG$ is $\mysum_{i=\frac{q}{2}+1}^{\frac{N}{2}} (E_{ii} + E_{-i,-i}) - \mysum_{i=1}^{\frac{q}{2}} (E_{ii} + E_{-i,-i})$. When $N$ is odd, $p$ is odd and $q$ is even, another possibility for $\mcG$ is $\mysum_{i=\frac{p+1}{2}}^{\frac{N-1}{2}} (E_{ii} + E_{-i,-i}) - \mysum_{i=0}^{\frac{p-1}{2}} (E_{ii} + E_{-i,-i}) $.  The fixed-point subalgebra is also isomorphic to $\mfso_p \op \mfso_q$.  The main advantage of the first matrix $\mcG$ given above in the BDI case is that it works for all possible parities of $N$, $p$ and $q$.

The various matrices $\mcG$ chosen in the previous paragraphs will help us define the twisted Yangians that will be of interest to us in the remainder of this article. They are not the only ones that we could use. Theorem 6.1 in \cite{He} says that if $\rho_1$ and $\rho_2$ are two involutions of a simple Lie algebra $\mfg$ and if $\mfg^{\rho_1}$ is isomorphic to $\mfg^{\rho_2}$, then $\rho_1$ and $\rho_2$ are conjugate under $\mathrm{Aut}(\mfg)$.  When $\mfg$ is of type B or C, there are no Dynkin diagram automorphisms and consequently $\mathrm{Aut}(\mfg)$ consists of inner automorphisms. This means that there exists a third matrix $D$ in $G$ such that $\mathrm{Ad}(\mcG_1) = \mathrm{Ad}(D) \mathrm{Ad}(\mcG_2) \mathrm{Ad}(D)^{-1}$, hence $\mcG_1 = Z D \mcG_2 D^{-1}$ where $Z$ is in the centre of $G$. We can take $\mcG_2$ to be one of the matrices $\mcG$ above (in types BI or CII) or $\sqrt{-1}\mcG$ (in type CI) and conclude that if $\mcG_1$ is any other matrix such that $\mathrm{Ad}(\mcG_1)$ is an involution of $\mfg$ with fixed-point subalgebra isomorphic to $\mfg^{\rho}$, then $\mcG_1$ is in the orbit of $\mcG$ (or $\sqrt{-1}\mcG$) under the adjoint action of $G$, up to multiplication by a central element in $G$. (The centre is trivial when $G=SO_N(\C)$ and $N$ is odd, and it is equal to $\{ \pm I \}$ when $G=SO_N(\C)$ with $N$ even or $G = Sp_N(\C)$.)  The orbit of $\mcG$ (or $\sqrt{-1}\mcG$) under the action of $\mathrm{Aut}(\mfg)$ is in bijection with $\mathrm{Aut}(\mfg)/\mathrm{Cent}_{\mathrm{Aut}(\mfg)}(\mcG)$  where $\mathrm{Cent}_{\mathrm{Aut}(\mfg)}(\mcG)$ is the centralizer of $\mcG$ in $\mathrm{Aut}(\mfg)$. The centralizer $\mathrm{Cent}_{\mathrm{Aut}(\mfg)}(\mcG)$ can be determined for the specific matrices $\mcG$ considered above and is a complex Lie subgroup of $G$ with Lie algebra $\mfg^{\rho}$.


\subsection{Twisted Yangians as subalgebras and quotients of extended twisted Yangians}

We now introduce two types of twisted Yangians associated to the extended Yangian $X(\mfg)$. We will explore their algebraic structure in the subsections bellow. 

\begin{defn} \label{D:X(g,G)}
Let the matrix $\mcG$ be as described above. The extended twisted Yangian $X(\mfg,\mcG)^{tw}$ is the subalgebra of $X(\mfg)$ generated by the coefficients of the entries of the $S$-matrix 
\eq{
S(u) = T(u-\ka/2)\,\mcG(u)\,T^t(-u+\ka/2) , \label{S=TGT} \vspace{-1ex}
}
where
\vspace{-.75ex}
\begin{itemize} [itemsep=-0.1ex]
\item $\mcG(u)=\mcG$ for cases BCD0, CI, DIII and DI, CII when $p=q$;
\item $\mcG(u) = (I-c\,u\, \mcG)(1-c\,u)^{-1}$ with $c=\tfrac{4}{p-q}$ for cases BDI, CII when $p>q$.
\end{itemize}
We will further refer to the first case above as `$\mcG$ of the first kind' and to the second case as `$\mcG$ of the second kind'. We will use the same terminology for the matrices $\mcG(u)$ and $S(u)$. 
\end{defn}

\begin{rmk}
The BCD0 case was considered in \cite{IMO} (see Definition 3.1 of their reflection algebra); those authors used a slightly different formula for $S(u)$ (see Proposition 3.2 in  \cite{IMO}). Our formula \eqref{S=TGT} is more in line with the one used in \cite{MNO}. The choice of $\mcG(u)$ is motivated by \cite{MaSho}. The rational form of $\mcG(u)$ is a new feature of twisted Yangians of types B,C,D which is not present in type A. The shift by $\ka/2$ in \eqref{S=TGT} is imposed upon us by \eqref{TT}. Shifting by $\ka/2$ gives a more symmetric formula for $S(u)$ and for the left-hand side of \eqref{Suwu}, which is similar to the notation used in physics for the unitary condition.
\end{rmk}

\begin{prop}\label{sususcalar}
In the algebra $X(\mfg,\mcG)^{tw}$, the product $S(u)\,S(-u)$ is a scalar matrix
\eq{
S(u)\,S(-u) = w(u)\cdot I, \label{Suwu}
}
where $w(u)$ is an even formal power series in $u^{-1}$ with coefficients $w_i$ $(i=2,4,\ldots)$ central in $X(\mfg,\mcG)^{tw}$.
\end{prop}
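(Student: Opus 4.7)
The plan is to compute $S(u)\,S(-u)$ directly from \eqref{S=TGT} using the quantum contraction identity \eqref{TT} twice, together with the elementary matrix identity $\mcG(u)\,\mcG(-u)=I$. Expanding the definition gives
\[
S(u)\,S(-u)=T(u-\ka/2)\,\mcG(u)\,T^t(-u+\ka/2)\,T(-u-\ka/2)\,\mcG(-u)\,T^t(u+\ka/2).
\]
The first move is to collapse the middle factor $T^t(-u+\ka/2)\,T(-u-\ka/2)$. Setting $v=-u-\ka/2$, so that $v+\ka=-u+\ka/2$, equation \eqref{TT} gives $T^t(v+\ka)\,T(v)=z(v)\cdot I$, and hence this factor equals $z(-u-\ka/2)\,I$. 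Because $z(-u-\ka/2)$ lies in the centre $ZX(\mfg)$ and multiplies the identity matrix, it commutes with every other factor and can be pulled to the front.

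The next step is to verify $\mcG(u)\,\mcG(-u)=I$. For $\mcG$ of the first kind $\mcG(u)=\mcG$, so this reduces to the identity $\mcG^2=I$, which I would check by direct inspection in each family: for BCD0 it is trivial; for CI and DIII the matrix is diagonal with entries $\pm 1$; for CII with $p=q$ it is diagonal with entries $\pm 1$; and for BDI with $p=q$ the matrix is the signed-permutation swap $i\leftrightarrow -i$, whose square is $I$. For $\mcG$ of the second kind a quick algebraic manipulation using $\mcG^2=I$ (again verified by inspection of the BDI and CII matrices with $p>q$) gives
\[
\mcG(u)\,\mcG(-u)=\frac{(I-cu\,\mcG)(I+cu\,\mcG)}{(1-cu)(1+cu)}=\frac{I-c^2u^2\,\mcG^2}{1-c^2u^2}=I.
\]

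Once these two facts are in place, what remains is $z(-u-\ka/2)\,T(u-\ka/2)\,T^t(u+\ka/2)$. A second application of \eqref{TT}, now with $v=u-\ka/2$, produces $T(u-\ka/2)\,T^t(u+\ka/2)=z(u-\ka/2)\,I$, and therefore
\[
S(u)\,S(-u)=z(u-\ka/2)\,z(-u-\ka/2)\cdot I.
\]
I would then \emph{define} $w(u):=z(u-\ka/2)\,z(-u-\ka/2)$. Evenness is automatic, since both factors lie in the commutative subalgebra $ZX(\mfg)[[u^{-1}]]$, so $w(-u)=z(-u-\ka/2)\,z(u-\ka/2)=w(u)$. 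The coefficients $w_i$ are polynomials in the central generators $z_k$ of $X(\mfg)$, hence central in $X(\mfg)$ and \emph{a fortiori} central in the subalgebra $X(\mfg,\mcG)^{tw}$; that they belong to $X(\mfg,\mcG)^{tw}$ at all follows from the identity $S(u)\,S(-u)=w(u)\,I$, which exhibits each $w_i$ as a polynomial in the generating coefficients of $S(u)$.

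The only mildly delicate point is the verification $\mcG^2=I$ across all five families BCD0, CI, DIII, BDI, CII for both choices of $\mcG(u)$, but each is a one-line check on the explicit matrices written down in Section~3.1. The rest is purely a matter of applying the quantum contraction twice and observing that all intermediate scalar factors are central.
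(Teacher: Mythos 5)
Your proposal is correct and follows essentially the same route as the paper: expand $S(u)\,S(-u)$, collapse $T^t(-u+\ka/2)\,T(-u-\ka/2)$ and $T(u-\ka/2)\,T^t(u+\ka/2)$ via two applications of \eqref{TT}, and use $\mcG(u)\,\mcG(-u)=I$ to conclude $w(u)=z(u-\ka/2)\,z(-u-\ka/2)$. The paper's proof is just a terser version of this; your explicit verification of $\mcG(u)\,\mcG(-u)=I$ (via $\mcG^2=I$ in each family) and of why the $w_i$ lie in $X(\mfg,\mcG)^{tw}$ fills in steps the paper leaves implicit.
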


\begin{proof}
Recall that $T^{t}(u+\ka)\,T(u)=T(u)\,T^{t}(u+\ka)=z(u)\cdot I$. Thus
$$
S(u)\,S(-u) = z(-u-\ka/2)\,z(u-\ka/2)\cdot I ,
$$
and $w(u)=z(-u-\ka/2)\,z(u-\ka/2)$ is indeed an even series whose coefficients are central since so are the coefficients of $z(u)$ in $X(\mfg)$.
\end{proof}

Let $W(\mfg,\mcG)^{tw}$ denote the commutative algebra generated by the coefficients of $w(u)$. It will be proven in Section \ref{Sec:34} that $W(\mfg,\mcG)^{tw}$ is indeed the centre of $X(\mfg,\mcG)^{tw}$.

\begin{defn} \label{D:Y(g,G)}
The twisted Yangian $Y(\mfg,\mcG)^{tw}$ is the quotient of $X(\mfg,\mcG)^{tw}$ by the ideal generated by the coefficients of the unitarity relation, i.e.,
\eq{
Y(\mfg,\mcG)^{tw} = X(\mfg,\mcG)^{tw} / (S(u)\,S(-u) - I) . \label{Y=X/(SS-I)}
}
\end{defn}

The new Yangians, as Olshanskii's twisted Yangians, are coideal subalgebras of a larger Yangian.

\begin{prop} \label{P:coideal} 
The algebra $X(\mfg,\mcG)^{tw}$ is a left coideal subalgebra of $X(\mfg)$:
\eq{
\Delta(X(\mfg,\mcG)^{tw}) \subset X(\mfg) \ot X(\mfg,\mcG)^{tw}. \nn
}
\end{prop}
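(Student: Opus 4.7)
The plan is to compute $\Delta(S(u))$ directly from \eqref{S=TGT} and recognize the second tensor slot as a copy of $S(u)$. Since $X(\mfg,\mcG)^{tw}$ is generated by the coefficients of the entries $s_{ij}(u)$ and $\Delta$ is an algebra homomorphism, it will suffice to show that every entry of the matrix $\Delta(S(u))$ already lies in $X(\mfg)\ot X(\mfg,\mcG)^{tw}$.

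First I would record how $\Delta$ interacts with $T(u)$ and $T^{t}(u)$ matricially. Writing $T_{[a]}(u)$ and $(T^{t})_{[a]}(u)$ for the matrices whose algebra entries sit in the $a$-th tensor slot of $X(\mfg)\ot X(\mfg)$, the coproduct formula in \eqref{Hopf X(g)} immediately yields $\Delta(T(u))=T_{[1]}(u)\,T_{[2]}(u)$. For the transpose, a brief element-level calculation using $(T^{t})_{kl}(u)=\theta_{kl}\,t_{-l,-k}(u)$ and a reindexing $m\mapsto -n$ in $\Delta(t_{-l,-k}(u))=\sum_{m}t_{-l,m}(u)\ot t_{m,-k}(u)$ gives
$$\Delta((T^{t})_{kl}(u))=\mysum_{n}(T^{t})_{nl}(u)\ot(T^{t})_{kn}(u),$$
after invoking the identity $\theta_{kl}\,\theta_{nl}\,\theta_{kn}=1$, which is valid in both the orthogonal and symplectic cases. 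Matricially this reads $\Delta(T^{t}(u))=(T^{t})_{[2]}(u)\,(T^{t})_{[1]}(u)$, the factors appearing in the opposite order.

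Next, since $\Delta$ is an algebra map and the scalar matrix $\mcG(u)\in\End\,\C^N$ is untouched by $\Delta$, applying $\Delta$ to \eqref{S=TGT} gives
$$\Delta(S(u))=T_{[1]}(u-\ka/2)\,T_{[2]}(u-\ka/2)\,\mcG(u)\,(T^{t})_{[2]}(-u+\ka/2)\,(T^{t})_{[1]}(-u+\ka/2).$$
Entries in the $[1]$-slot commute with entries in the $[2]$-slot as elements of $X(\mfg)\ot X(\mfg)$, so by associativity of matrix multiplication in $\End\,\C^N$ this can be regrouped as
$$\Delta(S(u))=T_{[1]}(u-\ka/2)\,\bigl[T_{[2]}(u-\ka/2)\,\mcG(u)\,(T^{t})_{[2]}(-u+\ka/2)\bigr]\,(T^{t})_{[1]}(-u+\ka/2),$$
and the middle bracket has $(a,c)$-entry $1\ot s_{ac}(u)$ by the very definition of $S(u)$, i.e.\ it is the matrix $I\ot S(u)$.

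Finally, reading off the $(i,j)$-entry yields
$$\Delta(s_{ij}(u))=\mysum_{a,c} t_{ia}(u-\ka/2)\,(T^{t})_{cj}(-u+\ka/2)\ot s_{ac}(u)\in X(\mfg)\ot X(\mfg,\mcG)^{tw},$$
from which the proposition follows. The only delicate point is the reversed order in $\Delta(T^{t})$; getting that order right is exactly what allows the factors $T_{[2]}$ and $(T^{t})_{[2]}$ to close around $\mcG(u)$ and reassemble into $S(u)$ in the second tensor slot.
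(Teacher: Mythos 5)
Your argument is correct and is essentially the paper's proof: both compute $\Delta(s_{ij}(u))$ from \eqref{S=TGT} and \eqref{Hopf X(g)} and arrive at the same formula $\Delta(s_{ij}(u))=\mysum_{a,b}\theta_{jb}\,t_{ia}(u-\ka/2)\,t_{-j,-b}(-u+\ka/2)\ot s_{ab}(u)$, which visibly lies in $X(\mfg)\ot X(\mfg,\mcG)^{tw}$. The only difference is presentational — you derive the reversed-order identity $\Delta(T^{t}(u))=(T^{t})_{[2]}(u)\,(T^{t})_{[1]}(u)$ and work matricially, while the paper states the entry-wise result directly — and your verification of $\theta_{kl}\theta_{nl}\theta_{kn}=1$ is a correct and worthwhile detail.
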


\begin{proof}
It is sufficient to show that $\Delta (s_{ij}(u)) \in X(\mfg) \ot X(\mfg,\mcG)^{tw}$. Indeed, 
$$
s_{ij}(u) = \mysum_{a,b=-n}^n \theta_{jb}\, t_{ia}(u-\ka/2)\,g_{ab}(u)\,t_{-j,-b}(-u+\ka/2). \vspace{-.75ex}
$$
and by \eqref{Hopf X(g)}, \vspace{-.75ex}
\eqa{ \label{cop:s(u)}
\Delta(s_{ij}(u)) = \mysum_{a,b=-n}^n \theta_{jb}\,t_{ia}(u-\ka/2)\,t_{-j,-b}(-u+\ka/2)\ot s_{ab}(u),
}
which completes the proof.
\end{proof}

The elements $w_i$ are group-like in $X(\mfg,\mcG)^{tw}$, that is, $\Delta : w(u)  \mapsto w(u)\ot w(u)$. This follows straightforwardly since $\Delta : z(u)  \mapsto z(u)\ot z(u)$ (see (2.29) in \cite{AMR}), which can be obtained from \eqref{TT}.

\smallskip

Now we show that $Y(\mfg,\mcG)^{tw}$ is isomorphic to a subalgebra of the extended twisted Yangian. Recall that $\wt Y(\mfg)$ was defined as the subalgebra of $X(\mfg)$ generated by the coefficients $\tau^{(r)}_{ij}$ of the series $\tau_{ij}(u) = y^{-1}(u)\,t_{ij}(u)$ with $-n\le i,j\le n$ and $r\in\Z_{\ge0}$; moreover, the $\tau_{ij}(u)$ are matrix entries of $\mcT(u)$.

\begin{thrm} \label{T:Yt=Xt^Y}
Let $\wt{Y}(\mfg,\mcG)^{tw}$ be the subalgebra of $\wt Y(\mfg)$ generated by the coefficients $\si_{ij}(u)$ of $\Si(u)$ defined by $\Si(u) = \mcT(u-\ka/2) \mcG(u) \mcT^t(-u+\ka/2)$. Then $\wt{Y}(\mfg,\mcG)^{tw}$ is a subalgebra of $X(\mfg,\mcG)^{tw}$ and the quotient homomorphism $X(\mfg,\mcG)^{tw} \onto Y(\mfg,\mcG)^{tw}$ induces an isomorphism between $\wt{Y}(\mfg,\mcG)^{tw}$ and $Y(\mfg,\mcG)^{tw}$.  Moreover, $X(\mfg,\mcG)^{tw}$ is isomorphic to $W(\mfg,\mcG)^{tw} \otimes \wt{Y}(\mfg,\mcG)^{tw}$. 
\end{thrm}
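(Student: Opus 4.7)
The plan is to lift the scalar decomposition $T(u) = y(u)\,\mcT(u)$ from Section~2 to the twisted setting. Set
\[ h(u) := y(u-\ka/2)\,y(-u+\ka/2). \]
Since $y(u)$ has coefficients in the centre $ZX(\mfg)$, substituting $T(u)=y(u)\,\mcT(u)$ and $T^t(-u+\ka/2) = \mcT^t(-u+\ka/2)\,y(-u+\ka/2)$ into the definition of $S(u)$ produces the factorization
\[ S(u) = h(u)\,\Si(u), \]
with $h(u) \in ZX(\mfg)[[u^{-1}]]$ and entries of $\Si(u)$ in $\wt Y(\mfg)[[u^{-1}]]$. Comparing with Proposition~\ref{sususcalar} forces $w(u) = h(u)\,h(-u)$ and $\Si(u)\,\Si(-u) = I$, so the generators of $\wt Y(\mfg,\mcG)^{tw}$ already satisfy the unitarity relation inside $\wt Y(\mfg)$.

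The heart of the proof is the inclusion $\wt Y(\mfg,\mcG)^{tw} \subset X(\mfg,\mcG)^{tw}$. Since $\si_{ij}(u) = h(u)^{-1}\,s_{ij}(u)$, it suffices to place the coefficients of $h(u)$ inside $W(\mfg,\mcG)^{tw}$. Writing $h(u) = \tilde w(u-\ka/2)$ with $\tilde w(v) := y(v)\,y(-v)$ even in $v$, the identity $w(u) = \tilde w(u-\ka/2)\,\tilde w(u+\ka/2)$ expanded in $u^{-1}$ gives a recursion of the form
\[ w_{2k} = 2\,\tilde w_{2k} + P_{2k}(\tilde w_2, \tilde w_4, \ldots, \tilde w_{2k-2}) \]
for an explicit polynomial $P_{2k}$ (depending on $\ka$). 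Inverting it expresses each $\tilde w_{2k}$ as a polynomial in $w_2, w_4, \ldots, w_{2k}$, hence as an element of $W(\mfg,\mcG)^{tw}$. Since every coefficient of $h(u)$ is a polynomial in the $\tilde w_{2j}$'s via the binomial expansion of $(u-\ka/2)^{-2k}$, all coefficients of $h(u)$ and hence of $h(u)^{-1}$ lie in $W(\mfg,\mcG)^{tw} \subset X(\mfg,\mcG)^{tw}$. This recursive extraction of $\tilde w_{2k}$ from $w_{2k}$ is the main technical obstacle: one must verify that the recursion never requires inverting anything beyond the constant $2$.

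For the tensor factorization $X(\mfg,\mcG)^{tw} \cong W(\mfg,\mcG)^{tw} \ot \wt Y(\mfg,\mcG)^{tw}$, consider the multiplication map $m$. It is well-defined because the coefficients of $w(u)$ are central in $X(\mfg,\mcG)^{tw}$; surjective since every generator satisfies $s_{ij}(u) = h(u)\,\si_{ij}(u)$ with $h(u)$ now in $W(\mfg,\mcG)^{tw}[[u^{-1}]]$; and injective because $m$ is the restriction of the multiplication $ZX(\mfg) \ot \wt Y(\mfg) \to X(\mfg)$, which is an isomorphism by \eqref{X=ZX*Y}, and the two subalgebras $W(\mfg,\mcG)^{tw}$ and $\wt Y(\mfg,\mcG)^{tw}$ are contained in the distinct tensor factors.

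Finally, the quotient $\pi: X(\mfg,\mcG)^{tw} \onto Y(\mfg,\mcG)^{tw}$ is obtained by setting $w(u) = 1$, i.e.\ by killing the augmentation ideal of $W(\mfg,\mcG)^{tw}$ in the tensor decomposition just established. The restriction of $\pi$ to $1 \ot \wt Y(\mfg,\mcG)^{tw}$ is therefore an isomorphism onto $Y(\mfg,\mcG)^{tw}$, completing the proof.
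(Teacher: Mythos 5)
Your argument is correct and follows essentially the same route as the paper's proof: your $h(u)$ is the paper's $q(u)$, and your recursion $w_{2k}=2\,\tilde w_{2k}+P_{2k}(\tilde w_2,\ldots,\tilde w_{2k-2})$ is exactly the induction the paper invokes to place the coefficients of $q(u)$ in $W(\mfg,\mcG)^{tw}$, after which the tensor decomposition and the quotient statement are deduced from $X(\mfg)\cong ZX(\mfg)\ot\wt Y(\mfg)$ just as in the paper. One cosmetic point: the identity $w(u)=h(u)\,h(-u)$ is not literally ``forced'' by comparing with Proposition \ref{sususcalar} (that comparison only yields $w(u)\cdot I=h(u)\,h(-u)\,\Si(u)\,\Si(-u)$); it should instead be read off from $w(u)=z(-u-\ka/2)\,z(u-\ka/2)$ together with $z(u)=y(u)\,y(u+\ka)$, with $\Si(u)\,\Si(-u)=I$ then following separately from \eqref{cTT} and $\mcG(u)\,\mcG(-u)=I$.
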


\begin{proof}
Set $q(u) = y(u-\ka/2)y(-u+\ka/2)$. Then $\Sigma(u) = q(u)^{-1} S(u) $ and $w(u) = q(u)q(-u) = q(u)q(u+\ka)$. It follows from the last equality using induction that the coefficients of $q(u)$ can be expressed in terms of the coefficients of $w(u)$, hence belong to the centre of $X(\mfg,\mcG)^{tw}$. The entries of $\Si(u)$ are thus also in $X(\mfg,\mcG)^{tw}$, so $\wt{Y}(\mfg,\mcG)^{tw}$ is a subalgebra of $X(\mfg,\mcG)^{tw}$. From the decomposition  $S(u) = q(u) \Sigma(u)$, it follows that $X(\mfg,\mcG)^{tw} \cong W(\mfg,\mcG)^{tw} \cdot  \wt{Y}(\mfg,\mcG)^{tw} $. $ W(\mfg,\mcG)^{tw} \subset ZX(\mfg)$ since $w(u) = z(-u-\ka/2) z(u-\ka/2)$ and $\wt{Y}(\mfg,\mcG)^{tw} \subset \wt{Y}(\mfg)$, where $\wt{Y}(\mfg)$ is the subalgebra of $X(\mfg)$ generated by the coefficients of $\mcT(u)$. ($\wt{Y}(\mfg)$ is isomorphic to the Yangian $Y(\mfg)$ - see \cite{AMR}.) Therefore, since $X(\mfg) \cong ZX(\mfg) \ot \wt{Y}(\mfg)$ \cite{AMR}, $X(\mfg,\mcG)^{tw}$ is isomorphic to $W(\mfg,\mcG)^{tw} \otimes \wt{Y}(\mfg,\mcG)^{tw}$. 

The kernel of the quotient homomorphism $X(\mfg,\mcG)^{tw} \onto Y(\mfg,\mcG)^{tw}$ is generated by $w_i, \, i\ge 1$. It follows from the decomposition $X(\mfg,\mcG)^{tw} \cong W(\mfg,\mcG)^{tw} \otimes \wt{Y}(\mfg,\mcG)^{tw}$ that $X(\mfg,\mcG)^{tw} \cong \mathrm{ker} \oplus \wt{Y}(\mfg,\mcG)^{tw}$ and thus $\wt{Y}(\mfg,\mcG)^{tw}$  is isomorphic to the image of the quotient homomorphism, that is, to $Y(\mfg,\mcG)^{tw}$.
\end{proof}

Let $f(u)$ be an invertible power series. The restriction of the map $\mu_f$ of $X(\mfg)$ (see \eqref{Aut}) to the subalgebra $X(\mfg,\mcG)^{tw}$ provides an automorphism of the latter; we denote it by $\nu_g$. Indeed, by \eqref{Aut} and \eqref{S=TGT} we have
$$
\mu_f : S(u) \mapsto f(u-\ka/2)\,T(u-\ka/2)\,\mcG(u)\,f(-u+\ka/2)\,T^{t}(-u+\ka/2) = f(u-\ka/2)f(-u+\ka/2)\,S(u) .
$$
From this we see that $g(u)$ given by $g(u)=f(u)f(-u)$ is an even series and $\nu_g(S(u)) = g(u-\ka/2)S(u)$.

\begin{crl} \label{C:32}
The algebra $\wt{Y}(\mfg,\mcG)^{tw}$ is stable under all automorphisms of the form $\nu_g$.
\end{crl}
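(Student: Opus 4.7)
The plan is to prove the stronger statement that $\nu_g$ acts as the identity on $\wt{Y}(\mfg,\mcG)^{tw}$, which trivially implies stability. By Theorem \ref{T:Yt=Xt^Y}, $\wt{Y}(\mfg,\mcG)^{tw}$ is generated by the coefficients of the entries $\si_{ij}(u)$ of the matrix $\Si(u) = \mcT(u-\ka/2)\,\mcG(u)\,\mcT^t(-u+\ka/2)$. Since the entries of $\mcG(u)$ are scalars (either the constant matrix $\mcG$, or $(I-cu\mcG)/(1-cu)$ in the second-kind case), it suffices to verify that $\mu_f$ fixes every entry $\tau_{ij}(u)$ of $\mcT(u)$.

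This is essentially built into the definition of $\mcT(u)$. Recall that $\tau_{ij}(u) = y^{-1}(u)\,t_{ij}(u)$. From \eqref{Aut} one has $\mu_f(t_{ij}(u)) = f(u)\,t_{ij}(u)$, and the paper records (immediately after equation \eqref{TT}) that $\mu_f(y(u)) = f(u)\,y(u)$. The two factors of $f(u)$ then cancel, so $\mu_f(\tau_{ij}(u)) = \tau_{ij}(u)$; equivalently, the entire matrix $\mcT(u)$ is $\mu_f$-invariant entry by entry.

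Because $\Si(u)$ is the matrix product of entries of $\mcT(\pm u \mp \ka/2)$ with entries of the scalar matrix $\mcG(u)$, each $\si_{ij}(u)$ is likewise $\mu_f$-invariant. Since $\nu_g$ is by construction the restriction of $\mu_f$ to $X(\mfg,\mcG)^{tw}$, it fixes every generator of $\wt{Y}(\mfg,\mcG)^{tw}$, and hence fixes the whole subalgebra pointwise; in particular it preserves it setwise, which is the claimed stability. I do not foresee a substantive obstacle: the factorization $\Si(u) = q(u)^{-1}\,S(u)$ from the proof of Theorem \ref{T:Yt=Xt^Y}, with $q(u) = y(u-\ka/2)\,y(-u+\ka/2)$, makes the cancellation equally transparent, since $\mu_f$ scales both $q(u)$ and $S(u)$ by the common factor $f(u-\ka/2)\,f(-u+\ka/2) = g(u-\ka/2)$; the only minor bookkeeping is the invertibility of $q(u)$ as a formal series, which is immediate from its constant term being $1$.
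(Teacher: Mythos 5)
Your proposal is correct and follows essentially the same route as the paper: the paper's proof likewise notes that $\mu_f$ fixes $\mcT(u)$, hence $\Si(u)$, and that $\nu_g$ inherits this from $\mu_f$ by restriction. You merely spell out the cancellation $\mu_f(\tau_{ij}(u)) = f(u)^{-1}f(u)\,\tau_{ij}(u)$ that the paper takes as already known.
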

\begin{proof}
We know already that $\mu_f(\mcT(u)) = \mcT(u)$, from which it follows that $\mu_f(\Sigma(u)) = \Sigma(u)$. The same holds for $\nu_g$, since it is obtained from $\mu_f$ by restriction.
\end{proof}

\begin{crl}\label{C:coideal2} 
The algebra $Y(\mfg,\mcG)^{tw}$ is a left coideal subalgebra of $Y(\mfg)$:
\eq{
\Delta(Y(\mfg,\mcG)^{tw}) \subset Y(\mfg) \ot Y(\mfg,\mcG)^{tw}. \nn
}
\end{crl}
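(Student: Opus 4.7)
The plan is to deduce this corollary from Proposition~\ref{P:coideal} by passing to the two relevant quotients, with the embedding $Y(\mfg,\mcG)^{tw}\hookrightarrow Y(\mfg)$ provided by Theorem~\ref{T:Yt=Xt^Y}. Recall that this embedding factors through the identification of $Y(\mfg,\mcG)^{tw}$ with $\wt Y(\mfg,\mcG)^{tw}\subset\wt Y(\mfg)\iso Y(\mfg)$, both isomorphisms being restrictions of the quotient maps $\pi_{tw}:X(\mfg,\mcG)^{tw}\to Y(\mfg,\mcG)^{tw}$ and $\pi_Y:X(\mfg)\to Y(\mfg)$. In particular, for any $a\in\wt Y(\mfg,\mcG)^{tw}$, the image of $\pi_{tw}(a)$ in $Y(\mfg)$ under the embedding is precisely $\pi_Y(a)$.

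Given $a\in\wt Y(\mfg,\mcG)^{tw}\subset X(\mfg,\mcG)^{tw}$, Proposition~\ref{P:coideal} produces a decomposition $\Delta(a)=\sum_i p_i\ot q_i$ with $p_i\in X(\mfg)$ and $q_i\in X(\mfg,\mcG)^{tw}$. Applying $\pi_Y\ot\pi_Y$ and using that the coproduct on $Y(\mfg)$ is induced from that on $X(\mfg)$ (since $z(u)$ is group-like), I obtain $\Delta_{Y(\mfg)}(\pi_Y(a))=\sum_i\pi_Y(p_i)\ot\pi_Y(q_i)$. It therefore suffices to check that each $\pi_Y(q_i)$ lies in the image of $Y(\mfg,\mcG)^{tw}\hookrightarrow Y(\mfg)$.

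For this, I would expand $q_i=\sum_j w_{ij}\,\tilde q_{ij}$ using the tensor-product decomposition $X(\mfg,\mcG)^{tw}\cong W(\mfg,\mcG)^{tw}\ot\wt Y(\mfg,\mcG)^{tw}$ from Theorem~\ref{T:Yt=Xt^Y}, with $w_{ij}\in W(\mfg,\mcG)^{tw}$ and $\tilde q_{ij}\in\wt Y(\mfg,\mcG)^{tw}$. The identity $w(u)=z(-u-\ka/2)\,z(u-\ka/2)$, noted in the proof of Proposition~\ref{sususcalar}, implies $\pi_Y(w(u))=1$ and hence $\pi_Y(W(\mfg,\mcG)^{tw})=\C$. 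Combined with the observation of the first paragraph, each $\pi_Y(q_i)$ becomes a $\C$-linear combination of the elements $\pi_Y(\tilde q_{ij})$, every one of which lies in the image of the embedding, as required.

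The principal obstacle is really just the bookkeeping: ensuring that the two quotient maps $\pi_Y$ and $\pi_{tw}$ interact compatibly with the subalgebras $\wt Y(\mfg)$ and $\wt Y(\mfg,\mcG)^{tw}$. The heart of the matter is the identity $w(u)=z(-u-\ka/2)\,z(u-\ka/2)$, which says that the central ideal by which $Y(\mfg,\mcG)^{tw}$ is a quotient of $X(\mfg,\mcG)^{tw}$ is absorbed into the central ideal defining $Y(\mfg)$; once this is recognised, the coideal property transfers automatically from the extended setting of Proposition~\ref{P:coideal}.
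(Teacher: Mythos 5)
Your argument is correct. It does, however, take a slightly different route from the paper's: the paper's proof (which is stated in one line) intends for you to redo the computation of Proposition \ref{P:coideal} inside $\wt{Y}(\mfg)$, i.e.\ to use $\Delta(\tau_{ij}(u))=\sum_k \tau_{ik}(u)\ot\tau_{kj}(u)$ (which holds because $y(u)$ is group-like) and the formula $\Si(u)=\mcT(u-\ka/2)\,\mcG(u)\,\mcT^t(-u+\ka/2)$ to obtain the exact analogue of \eqref{cop:s(u)} with $t$'s and $s$'s replaced by $\tau$'s and $\si$'s, landing directly in $\wt{Y}(\mfg)\ot\wt{Y}(\mfg,\mcG)^{tw}$, and then to invoke the identifications of Theorem \ref{T:Yt=Xt^Y}. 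You instead transfer the already-proved statement of Proposition \ref{P:coideal} through the quotient maps $\pi_Y$ and $\pi_{tw}$, with the key observations that $\ker\pi_{tw}$ is absorbed into $\ker\pi_Y$ via $w(u)=z(-u-\ka/2)\,z(u-\ka/2)$ and that $\pi_Y(W(\mfg,\mcG)^{tw})=\C$. Your version buys you freedom from recomputing any coproduct (and in particular from having to verify that $y(u)$ is group-like), at the price of the bookkeeping you describe: checking that $\pi_Y$ restricted to $X(\mfg,\mcG)^{tw}$ factors through $\pi_{tw}$ and agrees with the embedding of Theorem \ref{T:Yt=Xt^Y} on $\wt{Y}(\mfg,\mcG)^{tw}$, and that the Hopf structure on $Y(\mfg)$ is the one induced by $\pi_Y$. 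All of these checks go through, so both arguments are valid; the paper's is computationally more explicit, yours is more structural.
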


\begin{proof}
This follows by Theorem \ref{T:Yt=Xt^Y} and analogous computation as in the proof of Proposition \ref{P:coideal}
\end{proof}

\begin{rmk} \label{matG}
The following observation will be useful in further sections. Let $A\in G$. The automorphism $\al_A$ of $X(\mfg)$ (see \eqref{Aut}) restricts to an isomorphism between $X(\mfg,\mcG)^{tw}$ and $X(\mfg,A\mcG A^t)^{tw}$ (see \eqref{S=TGT}). This isomorphism descends to the quotients $Y(\mfg,\mcG)^{tw}$ and $Y(\mfg,A\mcG A^t)^{tw}$. 
\end{rmk}

\subsection{Poincar\'e--Birkhoff--Witt Theorem for twisted Yangians}\label{Sec:32}

We first formulate this theorem in terms of the associated graded algebra of a certain filtration on the twisted Yangian and then in terms of a vector space basis. We first prove it for $\wt{Y}(\mfg,\mcG)^{tw}$ (and hence for $Y(\mfg,\mcG)^{tw}$ by Theorem \ref{T:Yt=Xt^Y}) and then for the extended twisted Yangian $X(\mfg,\mcG)^{tw}$.

Since $\wt{Y}(\mfg,\mcG)^{tw}$ is a subalgebra of $X(\mfg,\mcG)^{tw}$, it inherits its filtration, which in turn comes from the filtration on $X(\mfg)$ obtained by setting $\mathrm{deg} \, t_{ij}^{(m)}=m-1$. As a subalgebra of $X(\mfg)$, $\wt{Y}(\mfg)$ also inherits a filtration.

The following lemma will be useful in the proof of Proposition \ref{P:Yt=Ug[x]^q} and Corollary \ref{X:PBW} below.

\begin{lemma}
Denote respectively by $\bar t^{(m)}_{ij}$ and $\bar s^{(m)}_{ij}$ the images of $t^{(m)}_{ij}$ and $s^{(m)}_{ij}$ in the $(m-1)$-th homogeneous component of the associated graded algebra $\gr\, X(\mfg)$.  Let $\bar {\tau}^{(m)}_{ij}$ and, respectively, $\bar{\si}^{(m)}_{ij}$ denote the images of $\tau^{(m)}_{ij}$ and $\si^{(m)}_{ij}$ in the $(m-1)$-st homogeneous component of $\gr\,\wt{Y}(\mfg)$. 
Then the following equalities hold:
\eqa{  \label{s^m}
\bar{s}_{ij}^{(m)} & = \mysum_{a=-n}^n \big(\bar t^{(m)}_{ia}g_{aj} + (-1)^m \theta_{ja} g_{ia}\bar t^{(m)}_{-j,-a}\big) + \delta_{m1} \bar g_{ij}, \\
 \label{sigma^m}
\bar{\sigma}_{ij}^{(m)} & = \mysum_{a=-n}^n \big(\bar {\tau}^{(m)}_{ia}g_{aj} + (-1)^m \theta_{ja} g_{ia}\bar \tau^{(m)}_{-j,-a}\big) + \delta_{m1} \bar  g_{ij}. 
}
where $\bar g_{ij} = 0$ if $\mcG(u)$ is of the first kind and  $\bar g_{ij} = (g_{ij}-\delta_{ij})c^{-1}$ if $\mcG(u)$ is of the second kind.
\end{lemma}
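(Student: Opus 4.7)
The plan is to expand both $S(u)$ and $\Sigma(u)$ entry-wise as Laurent series in $u^{-1}$ and then track which monomial contributions survive in the top filtration degree $m-1$. Since the two computations are completely parallel, I would first carry out the derivation of \eqref{s^m} in detail and then observe that \eqref{sigma^m} follows by the identical argument.

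Starting from $S(u) = T(u-\ka/2)\mcG(u)T^t(-u+\ka/2)$ together with $(T^t(v))_{bj} = \theta_{bj}t_{-j,-b}(v)$, one obtains
\[
s_{ij}(u) = \mysum_{a,b=-n}^{n}\theta_{jb}\,t_{ia}(u-\ka/2)\,g_{ab}(u)\,t_{-j,-b}(-u+\ka/2).
\]
Each factor is then expanded in $u^{-1}$: the coefficient of $u^{-r}$ in $t_{ia}(u-\ka/2)$ equals $t_{ia}^{(r)}$ plus a linear combination of $t_{ia}^{(r')}$ with $r'<r$, which is of strictly lower filtration degree, and similarly the $u^{-s}$-coefficient of $t_{-j,-b}(-u+\ka/2)$ is $(-1)^s t_{-j,-b}^{(s)}$ modulo lower filtration. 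For $\mcG(u)$ of the first kind one simply has $g_{ab}(u)=g_{ab}$, while for $\mcG(u)$ of the second kind the Laurent expansion $(1-cu)^{-1} = -\sum_{k\ge 1}(cu)^{-k}$ gives
\[
g_{ab}(u) = g_{ab} + (g_{ab}-\delta_{ab})\mysum_{k\ge 1}(cu)^{-k},
\]
so that its $u^{-k}$-coefficient for $k\ge 1$ is the scalar $(g_{ab}-\delta_{ab})c^{-k}$.

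Substituting these expansions and collecting the coefficient of $u^{-m}$ expresses $s_{ij}^{(m)}$ as a sum over triples $(r,k,s)$ with $r+k+s=m$. The key estimate is that any contribution with both $r\ge 1$ and $s\ge 1$ has filtration degree at most $(r-1)+(s-1)=m-k-2\le m-2$, so it vanishes in $\bar s_{ij}^{(m)}$. The only triples surviving in degree $m-1$ are therefore $(m,0,0)$, $(0,0,m)$, and---only when $m=1$, since a pure scalar sits in filtration degree $0$---the triple $(0,m,0)$. Using $t_{ij}^{(0)}=\delta_{ij}$, the first triple yields $\mysum_a \bar t_{ia}^{(m)}g_{aj}$ (from $b=j$, $\theta_{jj}=1$) and the second yields $(-1)^m\mysum_a \theta_{ja}g_{ia}\bar t_{-j,-a}^{(m)}$ (from $a=i$, after relabeling $b\to a$); the third contributes precisely $\delta_{m1}\bar g_{ij}$ with $\bar g_{ij}$ as defined in the statement. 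This proves \eqref{s^m}.

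The derivation of \eqref{sigma^m} is word-for-word the same with $t,s,T$ replaced by $\tau,\sigma,\mcT$: the only inputs used are that the $u^{-r}$-coefficient of $\tau_{ij}(u-\ka/2)$ equals $\tau_{ij}^{(r)}$ modulo strictly lower filtration (immediate from expanding $(u-\ka/2)^{-r}$ together with the inherited filtration on $\wt Y(\mfg)$) and that $\tau_{ij}^{(0)}=\delta_{ij}$. The main obstacle throughout is purely the bookkeeping of filtration degrees in the sum over $(r,k,s)$; once the estimate $(r-1)+(s-1)\le m-2$ is in hand, the only mildly subtle point is the identification of the $m=1$ scalar correction $\bar g_{ij}$ coming from the $k=m$ term of $\mcG(u)$ in the second-kind case.
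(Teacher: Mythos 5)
Your proposal is correct and follows essentially the same route as the paper's proof: expand $s_{ij}(u)=\sum_{a,b}\theta_{jb}\,t_{ia}(u-\ka/2)\,g_{ab}(u)\,t_{-j,-b}(-u+\ka/2)$ factor by factor in $u^{-1}$, note that the shift by $\ka/2$ only produces lower-filtration corrections, and observe that in the $u^{-m}$ coefficient only the "pure" terms $(m,0,0)$, $(0,0,m)$ and, for $m=1$, the scalar $(0,m,0)$ term from the second-kind $\mcG(u)$ survive in degree $m-1$. The bookkeeping, including the identification of $\bar g_{ij}=(g_{ij}-\delta_{ij})c^{-1}$, matches the paper's computation.
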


\begin{proof}
The proofs of both identities are very similar, so we consider only \eqref{s^m}. Let $g_{ij}(u)$ denote the matrix elements of $\mcG(u)$. Then the matrix elements of $S(u)$ are expressed as
\eq{ \label{s_ij(u)}
s_{ij}(u) = \mysum_{a,b=-n}^n \theta_{jb}\, t_{ia}(u-\ka/2)\,g_{ab}(u)\,t_{-j,-b}(-u+\ka/2).
}
We have
\eqn{
t_{ia}(u-\ka/2) &= \mysum_{r\ge0} t_{ia}^{(r)}(u-\ka/2)^{-r} 
= \delta_{ia} + \mysum_{r\ge1} \mysum_{s\ge0} t_{ia}^{(r)} {\scriptscriptstyle\small\begin{pmatrix} s+r-1 \\ s\end{pmatrix}} \left(\frac{\ka}{2}\right)^s u^{-s-r} ,
}
and
\eqn{
t_{-j,-b}(-u+\ka/2) &= \mysum_{r\ge0} (-1)^r t_{-j,-b}^{(r)}(u-\ka/2)^{-r} 
= \delta_{jb} + \mysum_{r\ge1} \mysum_{s\ge0} (-1)^r t_{-j,-b}^{(r)}\,  {\scriptscriptstyle\small\begin{pmatrix} s+r-1 \\ s\end{pmatrix}} \left(\frac{\ka}{2}\right)^s u^{-s-r} . \nonumber
}
Set $f^{(r)}(u) = \mysum_{s\ge0}  {\scriptscriptstyle\small\begin{pmatrix} s+r-1 \\ s\end{pmatrix}} {(\ka/2)}^s u^{-s}$ and $f^{(0)}(u)=1$. %
Let $\mcG(u)$ be of the first kind. Then $\mcG(u)=\mcG$ and $g_{ij}(u)=g_{ij}$, giving
\eqa{ \label{s_ij(u):1}
s_{ij}(u) = g_{ij} & + \mysum_{a=-n}^n \mysum_{r\ge1} \big(t^{(r)}_{ia} g_{aj} + (-1)^r \theta_{aj} g_{ia} t^{(r)}_{-j,-a} \big) f^{(r)}(u)\, u^{-r} \el
& + \mysum_{a,b=-n}^n \mysum_{r,s\ge1} (-1)^s \theta_{bj} t^{(r)}_{ia} g_{ab} t^{(r)}_{-j,-b} f^{(r)}(u)f^{(s)}(u)u^{-r-s} ,
}
and 
$$
\bar{s}_{ij}^{(m)} = \mysum_{a=-n}^n \big(\bar t^{(m)}_{ia}g_{aj} + (-1)^m \theta_{ja} g_{ia}\bar t^{(m)}_{-j,-a}\big)  \text{ for } m\ge 1.
$$
Let $\mcG(u)$ of the second kind. Then 
$$
\mcG(u) = (I-c\,u\,\mcG)(1-c\,u)^{-1} = \mcG + (\mcG-I)\mysum_{t\ge1}c^{-t}u^{-t} ,
$$
and
\eqa{ \label{s_ij(u):2}
s_{ij}(u) = g_{ij} &+ g'_{ij}\mysum_{t\ge1}c^{-t}u^{-t} + \mysum_{a=-n}^n \mysum_{r\ge1} \big(t^{(r)}_{ia} g_{aj} + (-1)^r \theta_{aj} g_{ia} t^{(r)}_{-j,-a} \big) f^{(r)}(u)\, u^{-r} \el
& + \mysum_{a=-n}^n \mysum_{r,t\ge1} \big(t^{(r)}_{ia} g'_{aj} + (-1)^r \theta_{aj} g'_{ia} t^{(r)}_{-j,-a} \big) f^{(r)}(u)\, c^{-t}u^{-r-t} \el
& + \mysum_{a,b=-n}^n \mysum_{r,s\ge1} (-1)^s \theta_{bj} t^{(r)}_{ia} \left( g_{ab} + \mysum_{t\ge1}g'_{ab}c^{-t}u^{-t}\right) t^{(s)}_{-j,-b} f^{(r)}(u)f^{(s)}(u)u^{-r-s} , 
}
where $g'_{ab}=g_{ab}-\delta_{ab}$.
This time we get
\[
\bar{s}_{ij}^{(m)} = \mysum_{a=-n}^n \big(\bar t^{(m)}_{ia}g_{aj} + (-1)^m \theta_{ja} g_{ia}\bar t^{(m)}_{-j,-a}\big) + \delta_{m1} (g_{ij} - \delta_{ij}) c^{-1} \text{ for } m\ge 1.  \qedhere
\]
\end{proof}

The twisted current algebra $\mfg[x]^\rho$ is defined as the subspace of $\mfg[x]$ consisting of elements fixed by the involution $\rho$ extended to $\mfg[x]$ by $\rho(F \ot p(x)) = \rho(F) \ot p(-x)$ for all $F\in\mfg$. The next result is an analogue for twisted Yangians of Theorem 3.6 in \cite{AMR}.

\begin{prop} \label{P:Yt=Ug[x]^q} 
The graded algebra $\gr\,\wt{Y}(\mfg,\mcG)^{tw}$ is isomorphic to the enveloping algebra $\mfU\mfg[x]^\rho$ of the twisted current algebra $\mfg[x]^\rho$.
\end{prop}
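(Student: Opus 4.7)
The plan is to realize $\gr\wt{Y}(\mfg,\mcG)^{tw}$ as a subalgebra of $\gr\wt{Y}(\mfg)\cong\mfU\mfg[x]$ and identify it with $\mfU\mfg[x]^\rho$. The filtration $\deg t_{ij}^{(m)}=m-1$ on $X(\mfg)$ restricts to $\wt{Y}(\mfg)$ and to $\wt{Y}(\mfg,\mcG)^{tw}$. By \cite[Theorem~3.6]{AMR}, the isomorphism $\gr\wt{Y}(\mfg)\cong\mfU\mfg[x]$ sends $\bar{\tau}_{ij}^{(m)}\mapsto F_{ij}x^{m-1}$. Applying this isomorphism to \eqref{sigma^m} and using the identity $\theta_{ja}F_{-j,-a}=-F_{aj}$, a direct consequence of \eqref{[F,F]}, the image of $\bar{\sigma}_{ij}^{(m)}$ in $\mfU\mfg[x]$ is
\[
\Phi_{ij}^{(m)} \;=\; \big(F\mcG+(-1)^{m+1}\mcG F\big)_{ij}\,x^{m-1}+\delta_{m1}\bar{g}_{ij},
\]
where $F$ denotes the matrix with $(i,j)$-entry $F_{ij}\in\mfg$, and the constant $\bar{g}_{ij}$ appears only in the second-kind case.

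The first step is to show that $\Phi_{ij}^{(m)}\in\mfg[x]^\rho$. Since $\rho$ extended to $\mfg[x]$ sends $Yx^k$ to $\rho(Y)(-x)^k$, an element $Yx^k$ lies in $\mfg[x]^\rho$ if and only if $\rho(Y)=(-1)^kY$, i.e., $Y\in\mfg^\rho$ for $k$ even and $Y\in\check{\mfg}^\rho$ for $k$ odd. Setting $k=m-1$, one needs the entries of $F\mcG+\mcG F$ to be $\rho$-invariant for $m$ odd and the entries of $[F,\mcG]$ to be $\rho$-antiinvariant for $m$ even. This is a finite linear-algebraic check using $\mcG^2=\pm I$ and the explicit form of $\mcG$, carried out separately for each of the five families of matrices $\mcG$ (BCD0, CI, DIII, BDI, CII). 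The scalar $\bar{g}_{ij}=(g_{ij}-\delta_{ij})c^{-1}$ arising at $m=1$ in the second-kind case lies in $\C\oplus\mfg^\rho\subset\mfU\mfg[x]^\rho$.

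The second step is to show that the $\Phi_{ij}^{(m)}$ span $\mfg[x]^\rho$ as a $\C$-vector space. For each $m\ge 1$, the twisted symmetrization $X\mapsto X+(-1)^{m+1}\rho(X)$ surjects $\mfg$ onto $\mfg^\rho$ (for $m$ odd) or $\check{\mfg}^\rho$ (for $m$ even). Since the $F_{ij}$ span $\mfg$ and $\mcG$ is invertible, as $(i,j)$ varies the entries of $F\mcG+(-1)^{m+1}\mcG F$ realize exactly the image of this surjection, i.e., the required $\rho$-eigenspace of $\mfg$; multiplying by $x^{m-1}$ and letting $m$ range over $\Z_{\ge 1}$ covers all graded components of $\mfg[x]^\rho$. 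Since $\mfU\mfg[x]^\rho$ is generated as a subalgebra of $\mfU\mfg[x]$ by $\mfg[x]^\rho$ (by PBW for enveloping algebras), we conclude $\gr\wt{Y}(\mfg,\mcG)^{tw}=\mfU\mfg[x]^\rho$, giving the desired isomorphism.

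The main obstacle is the parallel case-by-case verification in the first and second steps that the entries of $F\mcG+(-1)^{m+1}\mcG F$ both lie in and span the correct $\rho$-eigenspace of $\mfg$; this is elementary linear algebra but must be done separately for each of the five families of matrices $\mcG$, with the second-kind cases additionally requiring careful handling of the constant correction $\bar{g}_{ij}$ at $m=1$.
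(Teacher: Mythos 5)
Your proposal is correct and follows essentially the same route as the paper: transport $\bar{\sigma}_{ij}^{(m)}$ through the isomorphism $\gr\,\wt{Y}(\mfg)\cong\mfU\mfg[x]$ of \cite[Theorem~3.6]{AMR}, identify the images with the spanning elements $\mysum_{a}\big(F_{ia}g_{aj}-(-1)^{m}g_{ia}F_{aj}\big)x^{m-1}$ of $\mfg[x]^{\rho}$, and restrict. The only real difference is that your membership check in step 1 need not be done case by case over the five families of $\mcG$: the single identity $\mcG F_{ij}\mcG^{-1}=\mysum_{a,b}g_{ia}F_{ab}g_{bj}$ (valid for any symmetric $\mcG$ with $\mcG^{2}=I$, as computed in the paper) shows at once that your matrix $F\mcG+(-1)^{m+1}\mcG F$ equals $\big(F+(-1)^{m+1}\mcG F\mcG^{-1}\big)\mcG$ entrywise, so both membership in and spanning of the correct $\rho$-eigenspace follow uniformly.
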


\begin{proof}
The Lie algebra $\mfg[x]^\rho$ is the linear span of the elements
\eq{
F_{ij}^{(\rho,m)} = \big( F_{ij} - (-1)^m \mcG F_{ij} \mcG^{-1} \big) x^{m-1} \qquad\text{with}\qquad -n\le i,j \le n, \; m\ge1. \nn
}
It is also spanned by the elements $F_{ij}^{\prime(\rho,m)} $ defined by  \begin{equation}F_{ij}^{\prime(\rho,m)}  = \mysum_{a=-n}^n \big( F_{ia} g_{aj} - (-1)^m g_{ia} F_{aj} \big) x^{m-1}. \label{Fprime} \end{equation}
Let us see why this is true. The $(a,b)$ entry of $\mcG F_{ij} \mcG^{-1}$ is $\mysum_{c,d=-n}^n g_{ac} (F_{il})_{cd} g_{db}$ and $(F_{ij})_{cd} = (E_{ij} - \theta_{ij} E_{-j,-i})_{cd} = \delta_{ic} \delta_{jd} - \delta_{-j,c} \delta_{-i,d} \theta_{ij} $, so 
\eqn{
(a,b) \text{ entry of } \mcG F_{ij} \mcG^{-1} & = \mysum_{c,d=-n}^n g_{ac} (\delta_{ic} \delta_{jd}  - \delta_{-j,c} \delta_{-i,d} \theta_{ij} ) g_{db} \\
& = \mysum_{c,d=-n}^n  \delta_{ic} \delta_{jd} g_{ac}  g_{db} - \mysum_{c,d} \delta_{-j,c} \delta_{-i,d} \theta_{ij} g_{ac}  g_{db} \\
& = g_{ai} g_{jb} -  \theta_{ij} g_{a,-j}  g_{-i,b} .
}
Therefore, 
\eqn{
\mcG F_{ij} \mcG^{-1} & = \mysum_{a,b=-n}^n (g_{ai} g_{jb} -  \theta_{ij} g_{a,-j} g_{-i,b}) E_{ab} \\
& = \mysum_{a,b=-n}^n g_{ai} E_{ab} g_{jb} - \mysum_{a,b=-n}^n \theta_{ij} g_{-a,-j} E_{-a,-b} g_{-i,-b} \\
& = \mysum_{a,b=-n}^n g_{ai} F_{ab} g_{jb} = \mysum_{a,b=-n}^n g_{ia} F_{ab} g_{bj} .
}
It follows that $\mysum_{b=-n}^n F_{ib}^{\prime(\rho,m)} g_{bj} = F_{ij}^{(\rho,m)}$, which shows that \[ \mathrm{span}_{\C} \{ F_{ij}^{(\rho,m)} \, | \, -n\le i,j \le n \} \subset \mathrm{span}_{\C} \{ F_{ij}^{\prime(\rho,m)}  \, | \, -n\le i,j \le n \}.  \] Indeed, \begin{align*} \mysum_{b=-n}^n F_{ib}^{\prime(\rho,m)} g_{bj} & = \mysum_{a,b=-n}^n  (F_{ia} g_{ab} g_{bj} - (-1)^m g_{ia} F_{ab} g_{bj}) x^{m-1} \\
& =  \mysum_{a=-n}^n \left( F_{ia} \delta_{aj} - (-1)^m \mysum_{b=-n}^n g_{ia} F_{ab} g_{bj}\right) x^{m-1} = F_{ij}^{(\rho,m)}. \end{align*} Moreover, \[ \mysum_{j=-n}^n (F_{ij}  - (-1)^m  \mcG F_{ij} \mcG^{-1} ) g_{jk} x^{m-1} = \mysum_{a,j=-n}^n F_{ia}^{\prime(\rho,m)} g_{aj} g_{jk} = \mysum_{a=-n}^n F_{ia}^{\prime(\rho,m)} \delta_{ak} = F_{ik}^{\prime(\rho,m)}, \] which implies that  \[ \mathrm{span}_{\C} \{ F_{ij}^{\prime(\rho,m)}  \, | \, -n\le i,j \le n \} \subset \mathrm{span}_{\C} \{ F_{ij}^{(\rho,m)} \, | \, -n\le i,j \le n \}, \] hence equality holds. It will be useful later to know that $F_{-j,-i}^{\prime(\rho,m)}= (\pm)\, \theta_{ij} (-1)^m F_{ij}^{\prime(\rho,m)}$.

 By (\cite{AMR}, Theorem 3.6) there exists an isomorphism $\psi : \mfU\mfg[x] \to \gr\,\wt{Y}(\mfg),\, F_{ij} x^{m-1} \mapsto \bar\tau_{ij}^{(m)}$. Using the symmetry of $\bar\tau_{ij}^{(m)}$ and \eqref{sigma^m}, we can write $$ \bar{\si}_{ij}^{(m)}=\mysum_{a=-n}^n \big(\bar\tau^{(m)}_{ia}g_{aj} - (-1)^m g_{ia}\bar\tau^{(m)}_{aj}\big) + \delta_{m1} \bar g_{ij}$$ and we have
\eq{
\psi : F_{ij}^{\prime(\rho,m)}  \longmapsto  \mysum_{a=-n}^n \Big( \bar\tau_{ia}^{(m)} g_{aj} - (-1)^m g_{ia} \bar\tau^{(m)}_{aj} \Big) = \bar{\si}_{ij}^{(m)} - \delta_{m1} \bar g_{ij}. \label{psi:F->si}
}
Since ${\rm span}_\C\{F_{ij}^{(\rho,m)} \, | \, -n \le i,j \le n\} = {\rm span}_\C\{F_{ij}^{\prime(\rho,m)}\, | \, -n \le i,j \le n\}$ and the elements $\si_{ij}^{(m)}$ generate $\wt{Y}(\mfg,\mcG)^{tw}$, we can conclude the proof because we already know that $\psi$ is an isomorphism, hence its restriction to $\mfU\mfg[x]^\rho$ must provide an isomorphism with $\gr\,\wt{Y}(\mfg,\mcG)^{tw}$.
\end{proof}

\begin{crl} \label{C:33} 
Set $F_{ij}^{\prime \rho} = \mysum_{a=-n}^n \left( F_{ia}g_{aj} + g_{ia}F_{aj} \right)$. The assignment $F_{ij}^{\prime \rho} \mapsto  {\si}_{ij}^{(1)} - \bar{g}_{ij}$ defines an embedding $\mfU\mfg^\rho \into \wt{Y}(\mfg,\mcG)^{tw}$. 
\end{crl}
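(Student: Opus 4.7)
My strategy is to deduce the corollary directly from Proposition~\ref{P:Yt=Ug[x]^q}. The twisted current algebra $\mfg[x]^\rho$ contains $\mfg^\rho$ as its $x$-degree zero component, spanned by the elements $F_{ij}^{\prime(\rho,1)} = F_{ij}^{\prime\rho}$; thus $\mfU\mfg^\rho$ sits inside $\mfU\mfg[x]^\rho$ as the graded subalgebra concentrated in degree zero. Under the graded algebra isomorphism $\psi : \mfU\mfg[x]^\rho \iso \gr\,\wt{Y}(\mfg,\mcG)^{tw}$ supplied by Proposition~\ref{P:Yt=Ug[x]^q}, the generator $F_{ij}^{\prime\rho}$ is sent to $\bar{\sigma}_{ij}^{(1)} - \bar{g}_{ij}$, an element of graded degree zero. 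Restricting $\psi$ therefore yields an algebra isomorphism $\psi|_{\mfU\mfg^\rho} : \mfU\mfg^\rho \iso \gr_0\,\wt{Y}(\mfg,\mcG)^{tw}$.

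Next I would lift this back to the filtered algebra. Let $A$ be the subalgebra of $\wt{Y}(\mfg,\mcG)^{tw}$ generated by the elements $\sigma_{ij}^{(1)} - \bar{g}_{ij}$. Each such generator lies in filtration degree zero, and its image in $\gr_0\,\wt{Y}(\mfg,\mcG)^{tw}$ is exactly $\bar{\sigma}_{ij}^{(1)} - \bar{g}_{ij}$; hence $\gr A = \psi(\mfU\mfg^\rho)$, which is isomorphic to $\mfU\mfg^\rho$. Consider the natural map $\Phi : \mfU\mfg^\rho \to A$ sending $F_{ij}^{\prime\rho}$ to $\sigma_{ij}^{(1)} - \bar{g}_{ij}$. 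It is surjective by construction, and its associated graded coincides with $\psi|_{\mfU\mfg^\rho}$, which is an isomorphism; by the standard filtered-to-graded argument this forces $\Phi$ itself to be an isomorphism onto $A$. Composing with $A \hookrightarrow \wt{Y}(\mfg,\mcG)^{tw}$ then produces the claimed embedding.

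The delicate point is well-definedness of $\Phi$: one needs $[\sigma_{ij}^{(1)},\sigma_{kl}^{(1)}]$ to equal the image under $\Phi$ of the $\mfg^\rho$-bracket of $F_{ij}^{\prime\rho}$ and $F_{kl}^{\prime\rho}$ on the nose, not merely modulo lower filtration. This holds because the commutator of two filtration-degree-zero elements stays of filtration degree zero, so the graded identity from Proposition~\ref{P:Yt=Ug[x]^q} promotes verbatim to an identity in $\wt{Y}(\mfg,\mcG)^{tw}$. This is the main technical point of the proof. One can alternatively sidestep the filtration bookkeeping entirely by restricting the embedding $\mfU\mfg \hookrightarrow X(\mfg)$ from the Section~2 lemma to $\mfU\mfg^\rho$ -- noting that its image lies in $\wt{Y}(\mfg)$ since the central correction $y_1$ cancels in the antisymmetric combination $t_{ij}^{(1)} - \theta_{ij}\, t_{-j,-i}^{(1)}$ -- and then verifying directly from the expansion of $\Sigma(u)$, together with the symmetry $\tau_{ab}^{(1)} = -\theta_{ab}\,\tau_{-b,-a}^{(1)}$ for $a \neq b$, that $F_{ij}^{\prime\rho}$ maps to $\sigma_{ij}^{(1)} - \bar{g}_{ij}$; injectivity is then immediate from the injectivity of $\mfU\mfg \hookrightarrow X(\mfg)$.
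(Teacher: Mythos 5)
Your proof is correct and follows the route the paper intends: Corollary \ref{C:33} is stated as an immediate consequence of Proposition \ref{P:Yt=Ug[x]^q}, obtained by restricting the isomorphism $\psi$ to the degree-zero part $\mfU\mfg^\rho$ of $\mfU\mfg[x]^\rho$ and noting that, because the filtration is non-negative, the degree-zero component of the associated graded algebra coincides with the degree-zero filtered piece, so the graded relations hold verbatim for the elements $\si_{ij}^{(1)}-\bar g_{ij}$ themselves. Your identification of well-definedness as the only delicate point, and its resolution via $F_{-1}=0$, is exactly the right observation.
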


\begin{rmk} \label{R:flat-def}
The algebra $Y(\mfg,\mcG)^{tw}$ may be considered as a flat deformation of the algebra $\mfU\mfg[x]^\rho$. Introduce a formal deformation parameter $\hbar$. Let $Y_{\hbar}(\mfg,\mcG)^{tw}$ be the $\C[\hbar]$-subalgebra of  $Y(\mfg,\mcG)^{tw} \otimes_{\C} \C[\hbar]$ generated by $\tl s^{(r)}_{ij} = h^{r-1}s_{ij}^{(r)}$ for $r\ge 1$. (Here we denote by $s_{ij}^{(r)}$ also the image of $s_{ij}^{(r)}$ under the quotient homomorphism $X(\mfg,\mcG)^{tw} \onto Y(\mfg,\mcG)^{tw}$.) For $a\in\C^{\times}$, $Y_{\hbar}(\mfg,\mcG)^{tw}/(\hbar-a)Y_{\hbar}(\mfg,\mcG)^{tw}$ is isomorphic to $Y(\mfg,\mcG)^{tw}$, whereas $Y_{\hbar}(\mfg,\mcG)^{tw}/ \hbar Y_{\hbar}(\mfg,\mcG)^{tw}$ is isomorphic to the enveloping algebra of $\mfg[x]^{\rho}$.
\end{rmk}

Now we formulate the Poincar\'e-Birkhoff-Witt property in terms of a vector space basis, which could be useful for obtaining a basis of a Verma module as in Section 4.2 in \cite{Mo3}. Suppose that $\mcG$ is a diagonal matrix. Then we have:
$$
\bar{\si}_{ij}^{(m)} = (g_{jj} - (-1)^m g_{ii})\,\bar\tau^{(m)}_{ij} + \delta_{m1}\bar g_{ij} .
$$
Let $\mcG$ be in the BDI case. We write (assuming that $g_{00}=1$ and $g_{-j,j}, g_{j,-j}$ do not appear if $j=0$)
$$ 
\bar{\si}_{ij}^{(m)}= \bar\tau^{(m)}_{ij}g_{jj} + \bar\tau^{(m)}_{i,-j}\,g_{-j,j} - (-1)^m g_{ii}\,\bar\tau^{(m)}_{ij}  - (-1)^m g_{i,-i}\,\bar\tau^{(m)}_{-i,j}  +  \delta_{m1} \bar g_{ij}.
$$
Define \[ \mcQ^{\pm}=\left\{\pm\left(\tfrac{p-q-k}{2}+1\right),\ldots,\pm\left(\tfrac{N-k}{2}\right)\right\}, \quad \mcP=\left\{-\tfrac{p-q-k}{2}, \ldots, \tfrac{p-q-k}{2}\right\} = \mcP^{< 0} \cup \mcP^{> 0 } \text{ or } \mcP^{< 0} \cup \{ 0 \} \cup \mcP^{> 0 } \] where $k=0$ if $N$ is even and $k=1$ if $N$ is odd (i.e. $g_{ii}=1$, $g_{i,-i}=0$ for $i\in\mcP$ and $g_{ii}=0$, $g_{i,-i}=1$ for $i\in\mcQ^{\pm}$). 

Then
\eqa{
\bar{\si}_{ij}^{(m)} &= \bar\tau^{(m)}_{ij} - (-1)^m \bar\tau^{(m)}_{ij} \qquad \text{for} \qquad i,j\in\mcP , \label{BDI:a} \\
\bar{\si}_{ij}^{(m)} - \delta_{m1} \bar{g}_{ij} &= \bar\tau^{(m)}_{i,-j} - (-1)^m \bar\tau^{(m)}_{-i,j}  \qquad \text{for} \qquad i,j\in\mcQ^{\pm} , \label{BDI:b}\\
\bar{\si}_{ij}^{(m)} &= \bar\tau^{(m)}_{ij} - (-1)^m \bar\tau^{(m)}_{-i,j} \qquad  \text{for} \qquad i\in\mcQ^{\pm}, j\in\mcP, \label{BDI:c}\\
\bar{\si}_{ij}^{(m)} &= \bar\tau^{(m)}_{i,-j} -(-1)^m \bar\tau^{(m)}_{ij} \qquad  \text{for} \qquad i\in\mcP, j\in\mcQ^{\pm}. \label{BDI:d}
}
Recall that a vector space basis of $\wt{Y}(\mfg)$ is provided by the ordered monomials in the  generators $\tau_{ij}^{(r)}$ with $r\ge1$ and $i+j>0$ in the orthogonal case and $i+j\ge0$ in the symplectic case (\cite{AMR}, Corollary 3.7). 
This together with what was considered above implies the following analogue of the Poincar\'e--Birkhoff--Witt theorem for the algebra $Y(\mfg,\mcG)^{tw}$.

\begin{thrm} \label{Y:PBW} 
Given any total ordering on the set of generators $\si_{ij}^{(r)}$ of $\wt{Y}(\mfg,\mcG)^{tw}$, a vector space basis of $\wt{Y}(\mfg,\mcG)^{tw}$ is provided by the ordered monomials in the following generators $(r\ge 1)$:
\vspace{-.4em}
\begin{itemize} [itemsep=0.1ex]
\item BD0: \, $\si_{ij}^{(2r-1)}$ with $i+j>0$.

\item C0: \;\;\; $\si_{ij}^{(2r-1)}$ with $i+j\ge0$.

\item CI: \;\;\;\; $\si_{ij}^{(2r-1)}$ with $i,j>0$; and $\si_{ij}^{(2r)}$ with $i+j\ge0$, $ij < 0$.

\item DIII: \; $\si_{ij}^{(2r-1)}$ with $i,j>0$; and $\si_{ij}^{(2r)}$ with $i+j>0$, $ij < 0$.

\item CII: \;\;\, $\si_{ij}^{(2r-1)}$ with $i+j\ge0$ and $|i|,|j|\le\frac{q}{2}$ or $|i|,|j|\ge \frac{q}{2}+1$; \\ and \;\;\;\, $\si_{ij}^{(2r)}$ \quad with $i\ge\frac{q}{2}+1$, $-\frac{q}{2}\le j\le \frac{q}{2}$ or $j\ge\frac{q}{2}+1$, $-\frac{q}{2}\le i\le \frac{q}{2}$.

\item BDI: \;\, $\si^{(2r-1)}_{ij}$ with $i+j>0$ and either $i,j\in\mcP$, or $i\in\mcP^{> 0}$, $j\in\mcQ^+$, or $i \in \mcQ^+$, $j\in\mcP^{> 0}$ or $i=0,j\in\mcQ^+$ (in type BI only); we should also include $\si^{(2r-1)}_{ij}$ when $i>|j|$ and $i,j\in\mcQ^{\pm}$; \\
and \;\;\;\, $\si_{ij}^{(2r)}$ \quad with $i+j>0$ and either $i\in\mcP^{> 0}$, $j\in\mcQ^+$ or $i\in\mcQ^+$, $j\in\mcP^{> 0}$ or $i=0,j\in\mcQ^+$ (in type BI only); we should also include $\si_{ij}^{(2r)}$ when $i\ge |j|, i\neq j$ and $i, j\in\mcQ^{\pm}$.
\end{itemize}
\end{thrm}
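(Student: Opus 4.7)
The plan is to deduce the theorem from Proposition~\ref{P:Yt=Ug[x]^q}, which identifies $\gr\,\wt{Y}(\mfg,\mcG)^{tw}$ with the enveloping algebra $\mfU\mfg[x]^{\rho}$. Together with the classical Poincar\'e--Birkhoff--Witt theorem for an enveloping algebra, this reduces the problem to checking, case by case, that the principal symbols $\bar{\si}_{ij}^{(m)}$ of the generators listed in the statement give a $\C$-basis of the Lie algebra $\mfg[x]^{\rho}$. A basis of the filtered algebra is then produced by the standard lifting argument: if the principal symbols of an ordered family form a basis of the associated graded algebra, then the family itself is a basis of the filtered algebra, and ordered monomials in the original generators span and are linearly independent by degree.

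The key structural input is the decomposition $\mfg[x]^{\rho}=\mfg^{\rho}\ot\C[x^2]\op\check{\mfg}^{\rho}\ot x\,\C[x^2]$, which holds because $\rho(Fx^{m-1})=(-1)^{m-1}\rho(F)x^{m-1}$. Hence $Fx^{m-1}\in\mfg[x]^{\rho}$ if and only if $F\in\mfg^{\rho}$ for odd $m$ and $F\in\check{\mfg}^{\rho}$ for even $m$. Using \eqref{psi:F->si} and the symmetry $F_{ij}+\theta_{ij}F_{-j,-i}=0$, I only need to verify, within each case, that the listed $\si_{ij}^{(2r-1)}$ correspond under $\psi^{-1}$ to a basis of $\mfg^{\rho}$ for every fixed $r$ and the listed $\si_{ij}^{(2r)}$ to a basis of $\check{\mfg}^{\rho}$.

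For the trivial pairs BD0 and C0 nothing new is required: $\check{\mfg}^{\rho}=0$, only odd-level generators appear, and the index conditions $i+j>0$ or $i+j\ge 0$ reproduce the orthogonal/symplectic bases of $\mfg$ itself from \cite{AMR}, Corollary~3.7. When $\mcG$ is diagonal (cases CI, DIII and CII), the general identity \eqref{sigma^m} collapses to $\bar\si_{ij}^{(m)}=\bigl(g_{jj}-(-1)^m g_{ii}\bigr)\bar\tau_{ij}^{(m)}$, so nonvanishing at odd $m$ forces $g_{ii}=g_{jj}$ and at even $m$ forces $g_{ii}=-g_{jj}$. Intersecting these sign conditions with the requirement of one representative per $(i,j)\leftrightarrow(-j,-i)$ orbit gives precisely the ranges prescribed for CI, DIII and CII; for example in CI one has $g_{ii}=\mathrm{sign}(i)$ and $\theta_{ij}=\mathrm{sign}(i)\,\mathrm{sign}(j)$, so odd-level indices $i,j>0$ cut out a basis of the $\mfgl_{N/2}$ part of $\mfg^{\rho}$, while even-level indices with $ij<0$ and $i+j\ge 0$ exhaust $\check{\mfg}^{\rho}$.

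The BDI case is where I expect the main difficulty, because $\mcG$ has off-diagonal entries and identities \eqref{BDI:a}--\eqref{BDI:d} couple $\bar\tau_{ij}^{(m)}$ with $\bar\tau_{i,-j}^{(m)}$ and $\bar\tau_{-i,j}^{(m)}$. The plan is to split the index set into the four sub-blocks $\mcP\times\mcP$, $\mcQ^{\pm}\times\mcQ^{\pm}$, $\mcP\times\mcQ^{\pm}$ and $\mcQ^{\pm}\times\mcP$, and handle each sub-block using the corresponding identity among \eqref{BDI:a}--\eqref{BDI:d}. On $\mcP\times\mcP$ only odd-level generators survive, on the other three sub-blocks both parities of $m$ appear, and in each case one must identify the subsystem fixed by the involution $(i,j)\leftrightarrow(-j,-i)$. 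The auxiliary index conditions in the theorem involving $\mcP^{>0}$, $\mcQ^{+}$, the extra case $i=0$ in type BI, and the additional constraint for $i,j$ both in $\mcQ^{\pm}$ are designed precisely to select one representative from each such orbit in each sub-block; I expect to finish by a combinatorial dimension count matching the odd-level total with $\dim\mfso_p+\dim\mfso_q$ and the even-level total with $\dim\mfg-\dim\mfso_p-\dim\mfso_q$, which completes the identification of the proposed generators with a PBW basis of $\mfg[x]^{\rho}$ and hence of $\wt{Y}(\mfg,\mcG)^{tw}$.
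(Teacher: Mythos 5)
Your proposal is correct and follows essentially the same route as the paper: reduce via Proposition \ref{P:Yt=Ug[x]^q} and the classical PBW theorem to exhibiting, for each parity of $m$, a basis of $\mfg^{\rho}$ (odd level) resp.\ $\check{\mfg}^{\rho}$ (even level) among the symbols $\bar{\si}_{ij}^{(m)}$, selecting one representative per $(i,j)\leftrightarrow(-j,-i)$ orbit in the diagonal cases and treating BDI block-by-block via \eqref{BDI:a}--\eqref{BDI:d} with a concluding dimension count. The only detail left implicit in both your sketch and the paper is the explicit count for BDI ($\tfrac{p(p-1)+q(q-1)}{2}$ odd-level and $pq$ even-level generators per degree), which is routine.
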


\begin{proof}
This follows from Proposition \ref{P:Yt=Ug[x]^q}. The Lie algebra $\mfg[x]^{\rho}$ is spanned by the matrices $F_{ij}^{\prime(\rho,m)} $ for $-n \le i,j \le n$, so all we need to do is to extract a basis from this spanning set. This will lead via the Poincar\'e-Birkhoff-Witt Theorem to a basis of $\mfU \mfg[x]^{\rho}$ which, by Proposition \ref{P:Yt=Ug[x]^q}, corresponds to a basis of $\wt{Y}(\mfg,\mcG)^{tw}$ consisting of ordered monomials in some of the generators $\si_{ij}^{(r)}$.

Let's explain a little bit how the basis is obtained in the BDI case. By considering the four cases $i,j\in\mcP$, $i,j\in\mcQ^\pm$, $i\in\mcP$, $j\in\mcQ^\pm$ and $i\in\mcQ^\pm$, $j\in\mcP$ and using the anti-symmetry $F_{ij} = -F_{-j,-i}$, it can be checked that $\mfg^{\rho}\ot \C x^{2r}$ is spanned by $F_{ij}^{\prime(\rho,2r)}$ with $i+j>0$ and either $i,j\in\mcP$, or $i\in\mcP^{> 0}$, $j\in\mcQ^+$, or $i\in\mcQ^+$, $j\in\mcP^{> 0}$, or $i=0,j\in\mcQ^+$ (in type BI only), or $i,j\in\mcQ^\pm$ with $i> |j|$. All these elements are linearly independent and there are exactly $\frac{p(p-1) + q(q-1)}{2}$ of them, which is the dimension of $\mfg^{\rho}$. Indeed, there are $\frac{(p-q)^2-(p-q)}{2}$ with $i,j\in\mcP$, $\frac{q(p-q-k)}{2}$ with $i\in\mcP^{> 0}$, $j\in\mcQ^+$ or $i\in\mcQ^+$, $j\in\mcP^{> 0}$, and there are $q^2-q$ with $i,j\in\mcQ^{\pm}$ and $i> |j|$.

As for $\check{\mfg}^{\rho}\ot \C x^{2r+1}$, it is spanned by $F_{ij}^{\prime(\rho,2r+1)}$ with $i+j>0$ and either $i\in\mcP^{> 0}$, $j\in\mcQ^+$, or $i\in\mcQ^+$, $j\in\mcP^{> 0}$, or $i=0,j\in\mcQ^+$ (in type BI only), or $i,j\in\mcQ^\pm$ with $i\ge |j|, i\neq j$. These are all linearly independent and there are $pq$ such elements, which is the dimension of $\check{\mfg}$.
\end{proof}

\begin{crl} \label{X:PBW} 
Given any total ordering on the set of generators $s_{ij}^{(r)}$ of $X(\mfg,\mcG)^{tw}$, a vector space basis of $X(\mfg,\mcG)^{tw}$ is provided by the ordered monomials in the generators $w_i$ with $i=2,4,6,\ldots$, and $s^{(r)}_{ij}$ with $r,i,j$ satisfying the same constraints as in Theorem \ref{Y:PBW}.
\end{crl}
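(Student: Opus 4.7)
The approach is to leverage the tensor-product decomposition $X(\mfg,\mcG)^{tw} \cong W(\mfg,\mcG)^{tw}\otimes \wt{Y}(\mfg,\mcG)^{tw}$ from Theorem \ref{T:Yt=Xt^Y} together with the PBW theorem for $\wt{Y}(\mfg,\mcG)^{tw}$ (Theorem \ref{Y:PBW}), and then to convert the resulting $\sigma$-generator basis into the desired $s$-generator basis via a triangular change of variables.

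First I would establish that $W(\mfg,\mcG)^{tw}$ is a polynomial algebra freely generated by $w_2, w_4, w_6, \ldots$. Starting from $w(u)=z(u-\ka/2)\,z(-u-\ka/2)$, a direct expansion shows that $w_{2i}$ equals $2z_{2i}$ plus a polynomial in $z_1,\ldots,z_{2i-1}$ of strictly lower degree with respect to the canonical filtration $\deg t_{ij}^{(r)}=r-1$. Since $ZX(\mfg)=\C[z_1,z_2,\ldots]$ is a polynomial algebra by \cite[Theorem 3.1]{AMR}, the leading images $\bar{w}_{2i}=2\bar{z}_{2i}$ are algebraically independent in $\gr ZX(\mfg)$, and therefore so are the $w_{2i}$ in $W(\mfg,\mcG)^{tw}$. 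Combined with the tensor-product decomposition and Theorem \ref{Y:PBW}, this shows that, for any chosen total ordering, the ordered monomials in $\{w_{2i}\}$ together with the admissible generators $\sigma_{ij}^{(r)}$ form a $\C$-vector space basis of $X(\mfg,\mcG)^{tw}$.

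The final step is to replace the $\sigma$-generators by the $s$-generators. The factorisation $S(u)=q(u)\,\Sigma(u)$ with $q(u)=y(u-\ka/2)\,y(-u+\ka/2)$ yields $s_{ij}^{(r)}=\sigma_{ij}^{(r)}+\sum_{k=1}^{r}q_{k}\,\sigma_{ij}^{(r-k)}$, where $\sigma_{ij}^{(0)}=\delta_{ij}$ and each coefficient $q_k$ has filtration degree at most $k-1$. Inductively inverting the relation $w(u)=q(u)\,q(u+\ka)$ starting from $q_0=1$ expresses each $q_k$ as a polynomial in $w_2, w_4, \ldots$ (all odd-index $w_i$ vanish because $w(u)$ is even), so $q_k\in W(\mfg,\mcG)^{tw}$. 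Since the correction terms $q_k\,\sigma_{ij}^{(r-k)}$ have strictly smaller total superscript degree than $\sigma_{ij}^{(r)}$ and the $w_{2i}$ are central, the transition matrix from ordered $\{w_{2i},\sigma_{ij}^{(r)}\}$-monomials to ordered $\{w_{2i},s_{ij}^{(r)}\}$-monomials is upper triangular with identity leading part; its invertibility transports the basis, yielding the asserted PBW basis.

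The main technical hurdle I anticipate is confirming that each $q_k$ lies in $W(\mfg,\mcG)^{tw}$ rather than merely in the ambient $ZX(\mfg)$; this is handled by solving $w(u)=q(u)\,q(u+\ka)$ recursively within $W(\mfg,\mcG)^{tw}[[u^{-1}]]$, using the evenness of $w(u)$ to ensure that no odd-index central elements are required.
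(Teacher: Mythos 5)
Your proposal is correct and follows essentially the same route as the paper: both arguments rest on the decomposition $X(\mfg,\mcG)^{tw}\cong W(\mfg,\mcG)^{tw}\ot\wt{Y}(\mfg,\mcG)^{tw}$ from Theorem \ref{T:Yt=Xt^Y}, the algebraic independence of the $w_{2i}$ (Corollary \ref{C:ZX(g,G)} ({\it 3}), which you re-derive), Theorem \ref{Y:PBW}, and a final comparison of the $s$- and $\sigma$-generators. The paper performs this last comparison in the associated graded algebra, obtaining $\bar s^{(m)}_{ij}=\bar\sigma^{(m)}_{ij}+\tfrac14(1+(-1)^m)g_{ij}\bar w_m$ (formula \eqref{s=si+w}); you perform it at the filtered level via $S(u)=q(u)\Sigma(u)$. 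One step of your write-up is imprecise, and it is exactly the point where the paper is careful: the $k=r$ correction term $q_r\,\sigma^{(0)}_{ij}=q_r\,g_{ij}$ (note $\sigma^{(0)}_{ij}=g_{ij}$, not $\delta_{ij}$, since the constant term of $\Sigma(u)$ is $\mcG$) does \emph{not} have strictly smaller filtration degree than $\sigma^{(r)}_{ij}$: from $w(u)=q(u)\,q(-u)$ one gets $\bar w_r=(1+(-1)^r)\bar q_r$, so for even $r$ the leading graded image of $q_r g_{ij}$ is $\tfrac12 g_{ij}\bar w_r\neq 0$ in degree $r-1$, which is precisely the surviving term recorded in \eqref{s=si+w}. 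Your transition is therefore not unitriangular with respect to the filtration degree; it is nevertheless invertible because this surviving term lies in the central polynomial factor $\C[\bar w_2,\bar w_4,\ldots]$ of $\gr X(\mfg,\mcG)^{tw}$, so the change of generators is unitriangular with respect to the number of factors coming from $\mfU\mfg[x]^{\rho}$. With that one adjustment your argument coincides with the paper's.
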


\begin{proof}
By Proposition \ref{P:Yt=Ug[x]^q}, Theorem \ref{T:Yt=Xt^Y} and Corollary \ref{C:ZX(g,G)} ({\it 3}) (to be proved below), the graded algebra $\gr\,X(\mfg,\mcG)^{tw}$ is isomorphic to the tensor product of $\mfU\mfg[x]^\rho$ and the polynomial algebra $\C[\xi_2,\xi_4,\ldots]$ in the indeterminates $\xi_i$.  Denote by  $\bar w_m$ the images of the elements $w_m$ in the $(m-1)$-th homogeneous component of $\gr\,X(\mfg,\mcG)^{tw}$. Recall that
\eq{
\bar{t}_{ij}^{(m)} + \theta_{ij}\bar{t}_{-j,-i}^{(m)} = \delta_{ij} \bar{z}_m \qquad\text{and}\qquad
\bar\tau_{ij}^{(m)} = \frac{1}{2} (\bar{t}_{ij}^{(m)} - \theta_{ij} \bar{t}_{-j,-i}^{(m)}) . \nn
}
Moreover, we have $\bar{w}_m = 2 \bar{z}_m$. Then, by \eqref{s^m}, we see that the image of the element $s^{(m)}_{ij}$ in the $(m-1)$-th component of $\gr\,X(\mfg,\mcG)^{tw}$ is given by 
\eq{ \label{s=si+w}
\bar{s}^{(m)}_{ij} = \bar{\si}^{(m)}_{ij} + \frac{1}{4}(1+(-1)^m)g_{ij}\bar w_m .
}
Hence, by \eqref{psi:F->si}, we obtain an isomorphism
\eq{
F_{ij}^{\prime(\rho,m)} \mapsto \ol{s}_{ij}^{(m)} - \frac{1}{4}(1+(-1)^m) g_{ij} \ol{w}_m - \delta_{m1} \bar{g}_{ij}, \nn
}
with $\bar w_m$ being the image of $\xi_m$. This concludes the proof.
\end{proof}

\subsection{The centre of twisted Yangians}  \label{Sec:34}

In order to deduce that $W(\mfg,\mcG)^{tw}$ is the centre of $X(\mfg,\mcG)^{tw}$, we will need to know that the centre of $\wt{Y}(\mfg,\mcG)^{tw}$ is trivial. By the previous proposition, this reduces to the same problem for the enveloping algebra of the twisted current algebra.

\begin{prop} \label{P:C=0} 
The centre of the enveloping algebra of $\mfg[x]^{\rho}$ is trivial.
\end{prop}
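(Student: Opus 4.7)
The plan is to pass to the PBW associated graded algebra and prove the stronger statement that $S(\mfa)^{\mfa}=\C$, where $\mfa=\mfg[x]^{\rho}$. This will suffice because the symbol of any central element of $U(\mfa)$ is Poisson-central for the Kirillov--Kostant bracket on $\mfa^{*}$, i.e.\ it lies in $S(\mfa)^{\mfa}$, and an easy induction on filtration degree will then reduce the proposition to this classical statement (base case $U(\mfa)^{\leq 0}=\C$).

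First I would set up the coadjoint picture by grading $\mfa$ by $x$-degree, giving $\mfa_{k}=\mfg^{\rho}\cdot x^{k}$ for even $k$ and $\mfa_{k}=\check{\mfg}^{\rho}\cdot x^{k}$ for odd $k$, since $\rho$ acts on $\mfg[x]$ by $\rho(Fp(x))=\rho(F)p(-x)$. One then identifies $\mfa^{*}=\prod_{k\geq 0}\mfa_{k}^{*}$ and writes a point as $f=(f_{0},f_{1},\ldots)$. A direct computation from $[X\cdot x^{l},Z\cdot x^{k}]=[X,Z]\cdot x^{k+l}$ yields
\[
\big(\mathrm{ad}^{*}(X\cdot x^{l})f\big)_{k}(Z)=-f_{k+l}\big([X,Z]\big)\qquad \text{for all }Z\in\mfa_{k},\ X\in\mfa_{l}.
\]
The key feature, which will drive the rest of the argument, is that the $k$-th component depends only on the single coordinate $f_{k+l}$.

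Given any $P\in S(\mfa)^{\mfa}$, regard $P$ as a polynomial on $\mfa^{*}$ depending on finitely many coordinates, say $f_{0},\ldots,f_{M}$. Viewing each partial derivative $\tfrac{\partial P}{\partial f_{k}}\big|_{f}$ as an element of $\mfa_{k}\subseteq\mfg$, the $\mathrm{ad}^{*}(X\cdot x^{l})$-invariance of $P$ reads
\[
\sum_{k=0}^{M}f_{k+l}\Big(\big[X,\tfrac{\partial P}{\partial f_{k}}\big|_{f}\big]\Big)=0\qquad \text{for every }X\in\mfa_{l},\ l\geq 0,\ f\in\mfa^{*}.
\]
Choosing $l>2M$ makes the indices $k+l$, $k=0,\ldots,M$, pairwise distinct and all strictly greater than $M$, so the $f_{k+l}$ become independent coordinates that do not appear in $P$ itself. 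The invariance equation therefore decouples in $k$ and forces $[X,\tfrac{\partial P}{\partial f_{k}}|_{f}]=0$ in $\mfg$ for every $X\in\mfa_{l}$ and every $l>2M$.

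Finally, letting $l$ run through large even integers will make $X$ range over $\mfg^{\rho}$, and through large odd integers over $\check{\mfg}^{\rho}$; together these span $\mfg$. Hence each $\tfrac{\partial P}{\partial f_{k}}|_{f}$ lies in $Z(\mfg)=0$, so $P$ is independent of every $f_{k}$ and must be a constant, proving $S(\mfa)^{\mfa}=\C$. The main subtlety in the argument will be the bookkeeping in the decoupling step, i.e.\ carefully tracking which coordinates $f_{m}$ appear in the invariance equation as $(X,l)$ vary; once that is in place, the conclusion $Z(U(\mfa))=\C$ is essentially forced by the triviality of $Z(\mfg)$.
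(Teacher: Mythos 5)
Your argument is correct, but it follows a genuinely different route from the paper. The paper works at the level of the Lie algebra $\mfg$ itself: it first observes (via Dixmier's result that $\mfg^{\rho}$ is reductive in $\mfg$ and self-normalizing) that when the centre of $\mfg^{\rho}$ is trivial, $\mfg$ has no nonzero $\mfg^{\rho}$-invariant vectors, and then invokes a general criterion (Theorem 2.8.1 in Molev's book) to conclude that $\mfU\mfg[x]^{\rho}$ has trivial centre; since this fails precisely in types CI and DIII, where the centre of $\mfg^{\rho}$ is spanned by $J=\mysum_{i=1}^n F_{ii}$, the paper then runs a separate hands-on argument with the PBW basis, bracketing a putative central element against $Jx^{r}$ for $r$ large and reducing to the centre of $\mfU\mfg^{\rho}[x^{2}]$. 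You instead pass to the Poisson centre of $S(\mfa)$, $\mfa=\mfg[x]^{\rho}$, and prove $S(\mfa)^{\mfa}=\C$ uniformly by decoupling the coadjoint invariance equation at large $x$-degree $l$; running $l$ over large even and odd integers lets the test element range over $\mfg^{\rho}$ and $\check{\mfg}^{\rho}$, which together span $\mfg$, so only $Z(\mfg)=0$ is used. Your version avoids the case split and the external citations, is self-contained, and proves the slightly stronger statement that the Poisson centre of $S(\mfg[x]^{\rho})$ is trivial; the paper's version is shorter modulo the quoted theorems but needs the ad hoc CI/DIII patch (which, interestingly, uses the same ``pair against $x^{r}$ for $r$ large'' device that drives your decoupling step). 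Two small points worth making explicit if you write this up: $l>M$ already suffices for the decoupling (your $l>2M$ is harmless overkill), and the whole argument rests on $\mfg$ having trivial centre, which is exactly the standing simplicity assumption the paper also uses.
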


\begin{proof}
Since $\mfg$ is a simple Lie algebra, $\mfg^{\rho}$ is reductive in $\mfg$ and it is equal to it own normalizer in $\mfg$ by Theorem 1.13.3 in \cite{Di}. It follows that if a non-zero element of $\mfg$ is invariant under the adjoint action of $\mfg^{\rho}$, then this element belongs to the normalizer of $\mfg^{\rho}$, hence it belongs to the centre of $\mfg^{\rho}$. Consequently, if the centre of $\mfg^{\rho}$ is trivial, then $\mfg$ does not have any nonzero elements invariant under the adjoint action of $\mfg^{\rho}$ and Theorem 2.8.1 in \cite{Mo3} allows us to conclude that the enveloping algebra of $\mfg[x]^{\rho}$ has no centre.

For the cases which interests us in this paper, the centre of $\mfg^{\rho}$ is non-trivial in types CI and DIII where it happens to be one-dimensional and is spanned by the matrix $J$ given by $J =\mysum_{i=1}^n F_{ii}$ and $[F_{\mp i,\pm j},J] = \pm 2F_{\mp i,\pm j}$ if $i,j\ge 1$. 

Let $C$ be an element in the centre of $\mfU\mfg[x]^{\rho}$ which is not a scalar. A basis of $\mfU\mfg[x]^{\rho}$ is provided by ordered monomials (in some fixed chosen order) of the elements $F_{ij}x^{2m}$ for $i,j\ge 1, m\ge 0$ and $F_{ij}x^{2m+1}$ for $ij<0, i+j \ge 0, m\ge 0$ (except that $F_{-i,i}=0$ when $\mfg$ is of type DIII.) Therefore, we can write $C$ as a sum of such monomials. 

Since $C$ is in the centre of $\mfU\mfg[x]^{\rho}$, $[Jx^r,C]=0$ for all $r\ge 0$ and it follows that the monomials in $C$ cannot include any  $F_{ij}x^{2m+1}$ with $ij<0, i+j \ge 0, m\ge 0$. (This can be seen by taking $r$ larger than any of the exponents of $x$ appearing in any of the monomials which add up to $C$.)  Therefore, $C$ is a central element in $\mfU\mfg^{\rho}[x^2]$ and it follows from Lemma 1.7.4 in \cite{Mo3} that the centre of $\mfU\mfg^{\rho}[x^2]$ is a polynomial ring in the variables $J x^{2m}, \, m\ge 0$. However, such an element cannot be in the centre of $\mfU\mfg[x]^{\rho}$ unless it is zero, again because $[F_{\mp i,\pm j},J] = \pm 2F_{\mp i,\pm j}$ if $i,j\ge 1$.
\end{proof}

\smallskip

The next corollary is the analogue of Corollary 3.9 in \cite{AMR} for the (extended) twisted Yangians.

\begin{crl} \label{C:ZX(g,G)}
The following statements hold: 
\vspace{-.4em}
\begin{enumerate} [itemsep=-0.1ex]
\item The centre of the algebra $Y(\mfg,\mcG)^{tw}$ is trivial.
\item The coefficients of the even series $w(u)$ generate the whole centre $ZX(\mfg,\mcG)^{tw}$ of $X(\mfg,\mcG)^{tw}$.
\item The coefficients $w_{2i}$ of the even series $w(u)$ are algebraically independent, so the subalgebra $ZX(\mfg,\mcG)^{tw}$ of $X(\mfg,\mcG)^{tw}$ is isomorphic to the algebra of polynomials in countably many variables.
\end{enumerate}
\end{crl}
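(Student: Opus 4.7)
My plan is to prove \textit{(1)} by a filtration argument using Propositions \ref{P:Yt=Ug[x]^q} and \ref{P:C=0}, to prove \textit{(3)} directly by relating the $w_{2i}$ to the coefficients of the quantum contraction, and to deduce \textit{(2)} from \textit{(1)} and \textit{(3)} via the tensor product decomposition of Theorem \ref{T:Yt=Xt^Y}. I avoid invoking Corollary \ref{X:PBW} here, since its proof already uses \textit{(3)}.

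For \textit{(1)}, I would equip $\wt{Y}(\mfg,\mcG)^{tw}$ with the filtration inherited from $X(\mfg)$ via $\mathrm{deg}\,t_{ij}^{(m)} = m - 1$, so that by Proposition \ref{P:Yt=Ug[x]^q} the associated graded is $\mfU\mfg[x]^\rho$. If $c$ is a non-scalar central element of filtration degree $d \ge 1$ with principal symbol $\bar c$, then for any homogeneous $\bar a$ of degree $e$ and any lift $a$ the identity $[a,c] = 0$ descends to $[\bar a, \bar c] = \overline{[a,c]} = 0$ in $\gr^{d+e}$. Hence $\bar c$ is central in $\mfU\mfg[x]^\rho$, and Proposition \ref{P:C=0} forces $\bar c \in \C \subset \gr^0$, which is incompatible with $d \ge 1$. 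Therefore $c$ is a scalar, and via the isomorphism $Y(\mfg,\mcG)^{tw} \cong \wt{Y}(\mfg,\mcG)^{tw}$ of Theorem \ref{T:Yt=Xt^Y} this yields the triviality of the centre of $Y(\mfg,\mcG)^{tw}$.

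For \textit{(3)}, I would use the inclusion $W(\mfg,\mcG)^{tw} \subset ZX(\mfg)$ and the fact that $ZX(\mfg) \cong \C[z_1, z_2, \ldots]$ is polynomial in the coefficients of $z(u)$, with $\mathrm{deg}\,z_j = j - 1$. Expanding $w(u) = z(-u-\ka/2)\,z(u-\ka/2)$ in powers of $u^{-1}$ and isolating the top-filtration-degree contribution gives
\[
w_{2i} = 2\,z_{2i} + P_{2i}(z_1, \ldots, z_{2i-1})
\]
for some polynomial $P_{2i}$ in the lower $z_j$, so $\{w_{2i}\}_{i\ge 1}$ is upper-triangular in $\{z_{2i}\}_{i\ge 1}$ and therefore algebraically independent. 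For \textit{(2)}, I would then invoke the algebra isomorphism $X(\mfg,\mcG)^{tw} \cong W(\mfg,\mcG)^{tw} \ot \wt{Y}(\mfg,\mcG)^{tw}$ from Theorem \ref{T:Yt=Xt^Y}, with $W(\mfg,\mcG)^{tw}$ central. By \textit{(3)} the ordered monomials $\{m_\alpha\}$ in the $w_{2i}$ form a $\C$-basis of $W(\mfg,\mcG)^{tw}$, and each central $c \in X(\mfg,\mcG)^{tw}$ admits a unique expansion $c = \mysum_\alpha m_\alpha\,y_\alpha$ with $y_\alpha \in \wt{Y}(\mfg,\mcG)^{tw}$. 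The identity
\[
0 = [a,c] = \mysum_\alpha m_\alpha\,[a,y_\alpha] \quad\text{for all}\quad a \in \wt{Y}(\mfg,\mcG)^{tw}
\]
and the uniqueness of the tensor decomposition force $[a,y_\alpha] = 0$ for every $\alpha$ and every $a$; each $y_\alpha$ is thus central in $\wt{Y}(\mfg,\mcG)^{tw}$ and, by \textit{(1)}, a scalar, so $c \in W(\mfg,\mcG)^{tw}$.

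The main obstacle lies in the degeneration step in \textit{(1)}: one needs the commutator on $\wt{Y}(\mfg,\mcG)^{tw}$ to drop filtration degrees in a manner compatible with the Lie bracket on $\mfg[x]^\rho$, so that $\overline{[a,c]} = [\bar a, \bar c]$ in the graded algebra and centrality genuinely passes to $\gr$. Given the compatibility of the filtration on $X(\mfg)$ with its graded structure exhibited in \cite{AMR}, this should reduce to standard Yangian bookkeeping; once it is in place, everything else is formal, relying only on the polynomial structure of $ZX(\mfg)$ and on the tensor product decomposition of Theorem \ref{T:Yt=Xt^Y}.
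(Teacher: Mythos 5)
Your proposal is correct and follows essentially the same route as the paper: \textit{(1)} via the degeneration to $\mfU\mfg[x]^\rho$ (Propositions \ref{P:Yt=Ug[x]^q} and \ref{P:C=0}), \textit{(3)} via the triangular relation between the $w_{2i}$ and the $z_j$, and \textit{(2)} from the decomposition $X(\mfg,\mcG)^{tw}\cong W(\mfg,\mcG)^{tw}\ot\wt{Y}(\mfg,\mcG)^{tw}$ of Theorem \ref{T:Yt=Xt^Y} together with \textit{(1)}. Your formula $w_{2i}=2z_{2i}+P_{2i}(z_1,\ldots,z_{2i-1})$ is in fact the correct one (the paper writes $2z_i$, an apparent misprint), and your concern about $\overline{[a,c]}=[\bar a,\bar c]$ is resolved by the standard fact that the product on the associated graded of a filtered algebra is induced by the product on the algebra.
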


\begin{proof}
{\it 1} follows Propositions \ref{P:Yt=Ug[x]^q} and \ref{P:C=0}. By Theorem \ref{T:Yt=Xt^Y}, the even coefficients of the series $w(u)$ generate the whole centre of $X(\mfg,\mcG)^{tw}$, which proves {\it 2}.

As for {\it 3}, the algebraic independence of the $w_{2i}$ is a consequence of the algebraic independence of the central elements $z_i$ and the fact that, since $w(u) = z(-u-\kappa/2) z(u-\kappa/2)$, we have that $w_{2i} = 2z_i + p(z_1,\ldots, z_{i-1})$ for some polynomial $p(z_1,\ldots, z_{i-1})$.
\end{proof}

\begin{crl} \label{C:Xt=ZXt*Yt} 
The algebra $X(\mfg,\mcG)^{tw}$ is isomorphic to the tensor product of its centre and the subalgebra $\wt{Y}(\mfg,\mcG)^{tw}$.
\end{crl}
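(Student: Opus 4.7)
The plan is very short: this corollary is simply the combination of Theorem \ref{T:Yt=Xt^Y} with part \textit{2} of Corollary \ref{C:ZX(g,G)}. First I would recall that, by definition, $W(\mfg,\mcG)^{tw}$ is the commutative subalgebra of $X(\mfg,\mcG)^{tw}$ generated by the coefficients $w_{2i}$ of the even power series $w(u)$ appearing in the unitarity relation $S(u)\,S(-u)=w(u)\cdot I$. Part \textit{2} of Corollary \ref{C:ZX(g,G)} asserts precisely that these coefficients generate the full centre $ZX(\mfg,\mcG)^{tw}$, so we have the identification $W(\mfg,\mcG)^{tw}=ZX(\mfg,\mcG)^{tw}$ as subalgebras of $X(\mfg,\mcG)^{tw}$.

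Second, I would invoke Theorem \ref{T:Yt=Xt^Y}, whose final assertion is the tensor product decomposition $X(\mfg,\mcG)^{tw} \cong W(\mfg,\mcG)^{tw} \otimes \wt{Y}(\mfg,\mcG)^{tw}$. Substituting the identification from the first step, we obtain
\eq{
X(\mfg,\mcG)^{tw} \;\cong\; ZX(\mfg,\mcG)^{tw} \otimes \wt{Y}(\mfg,\mcG)^{tw}, \nn
}
which is exactly the claim. There is no real obstacle, since the honest work (namely, that the product map $W(\mfg,\mcG)^{tw}\otimes \wt{Y}(\mfg,\mcG)^{tw}\to X(\mfg,\mcG)^{tw}$ is an algebra isomorphism, and that $W(\mfg,\mcG)^{tw}$ exhausts the centre) has already been carried out in Theorem \ref{T:Yt=Xt^Y} and Corollary \ref{C:ZX(g,G)}. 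The corollary is therefore best presented as a one-line reformulation packaging those two results in a form convenient for citation in later sections.
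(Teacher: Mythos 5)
Your proposal is correct and is exactly the paper's argument: the paper also deduces the corollary as an immediate consequence of Theorem \ref{T:Yt=Xt^Y} (the decomposition $X(\mfg,\mcG)^{tw}\cong W(\mfg,\mcG)^{tw}\otimes\wt{Y}(\mfg,\mcG)^{tw}$) together with Corollary \ref{C:ZX(g,G)} (which identifies $W(\mfg,\mcG)^{tw}$ with the centre). Nothing is missing.
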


\begin{proof}
This is an immediate consequence of Theorem \ref{T:Yt=Xt^Y} and Corollary \ref{C:ZX(g,G)}.
\end{proof}

\subsection{Quantization of a left Lie coideal structure}

It was shown in (\cite{AACFR}, Theorem 3.3) that $\wt Y(\mfg)$ is a homogeneous quantization of a Lie bi-algebra $(\mfg[x],\bdel)$ (see Definition 6.2.6 in \cite{CP}), where $\bdel$ is a cobracket on $\mfg[x]$ defined as follows. $\mfg[x]$ is equal to ${\rm span}_\C\{F^{(r)}_{ij}\,|-n\leq i,j\leq n,\, r\geq 1 \}$, where $F^{(r)}_{ij} = F_{ij} \,x^{r-1}$ and the grading on $\mfg[x]$ is given by $\deg F^{(r)}_{ij} = r-1$. (For convenience we set $F^{(0)}_{ij}=0$.) Then
\eq{
\bdel(F^{(r)}_{ij})= \mysum_{a=-n}^{n} \mysum_{s=1}^{r-1} \left( F^{(r-s)}_{ia} \ot F^{(s)}_{aj} -  F^{(s)}_{aj} \ot F^{(r-s)}_{ia} \right) . \label{cobrack1} 
}
Set $\wt \mcT(u) = (\mcT(u/\hbar)-I)/\hbar$ and let $\tl\tau_{ij}(u)$ denote the matrix elements of $\wt\mcT(u)$. Then, for the coefficient $\tl{\tau}^{(r)}_{ij}$ of $u^{-r}$ in $\tl\tau_{ij}(u)$, set
$$ 
\bdel(\tl{\tau}^{(r)}_{ij})  = \frac{1}{\hbar}\left( \Delta(\tl{\tau}^{(r)}_{ij}) - \Delta^{op}(\tl{\tau}^{(r)}_{ij}) \right).
$$
Since $\Delta(\tl{\tau}^{(r)}_{ij})= \tl{\tau}^{(r)}_{ij}\ot1 + 1\ot\tl{\tau}^{(r)}_{ij} +  \hbar \displaystyle{\mysum_{a=-n}^n \mysum_{s=1}^{r-1}} \left( \tl{\tau}^{(r-s)}_{ia} \ot \tl{\tau}^{(s)}_{aj}\right)$, it follows that 
\eq{
\bdel(\tl{\tau}^{(r)}_{ij}) = \mysuml_{a=-n}^n \mysuml_{s=1}^{r-1} \left( \tl{\tau}^{(r-s)}_{ia} \ot \tl{\tau}^{(s)}_{aj} - \tl{\tau}^{(s)}_{aj} \ot \tl{\tau}^{(r-s)}_{ia} \right). \label{cobrack2}
}
Using the generators $\tl{\tau}_{ij}^{(r)}$, we can define a flat deformation $\wt Y_{\hbar}(\mfg)$ of $\mfU\mfg[x]$ and $\wt Y_{\hbar}(\mfg)/\hbar \wt Y_{\hbar}(\mfg) \cong \mfU\mfg[x]$ by identifying $\tl{\tau}_{ij}^{(r)} \,({\rm mod}\;\hbar)$ with $\bar{\tau}_{ij}^{(r)}$. (See Remark \ref{R:flat-def} where $Y_\hbar(\mfg,\mcG)^{tw}$ ($\cong\wt Y_\hbar(\mfg,\mcG)^{tw}$) is considered.) Upon this identification, the cobracket \eqref{cobrack2} becomes \eqref{cobrack1}.

We want to show that an analogous result holds for $\wt{Y}_\hbar(\mfg,\mcG)^{tw}$. It will be convenient for us to use the language of twisted Manin triples and left Lie coideals as introduced by S. Belliard and N. Crampe in (\cite{BeCr}, Definitions 2.1 and 2.2). (What we call left Lie coideals are termed left Lie bi-ideals in \textit{loc.~cit.}) Left Lie coideal structures for twisted current algebras were constructed by one of the current authors in (\cite{BeRe}, Section 4.2). Let $\mfg=\mfg^\rho\op\check\mfg^\rho$ be the symmetric pair decomposition of $\mfg$ with respect to the involution $\rho$. The standard Lie bialgebra structure on the current algebra $\mfg[x]$ comes from the Manin triple $(\mfg((x^{-1})),\mfg[x],\mfg[[x^{-1}]])$ with the non-degenerate ad-invariant bilinear form $\langle \cdot, \cdot \rangle$ given by $\langle F_1 x^r, F_2 x^{-s}  \rangle = \kappa(F_1,F_2) \delta(r=s-1)$ where $\kappa(F_1,F_2) = \frac{1}{2}\mathrm{Tr}(F_1F_2)$. (In particular, $\kappa(F_{ij},F_{ji}) =1$ and $\kappa(F_{ij},F_{kl}) =0$ if $(k,l) \neq (j,i)$ or $(k,l)\neq (-i,-j)$.) The involution $\rho$ can be naturally extended to $\mfg((x^{-1}))$ and we will denote its extension also by $\rho$. The pairing  $\langle \cdot, \cdot \rangle$ is not $\rho$-invariant because 
\[ 
\langle \rho(F_1 x^r), \rho(F_2 x^{-r-1})  \rangle = - \langle \rho(F_1) x^r, \rho(F_2) x^{-r-1} \rangle = - \kappa(F_1,F_2) = - \langle F_1 x^r, F_2 x^{-r-1}  \rangle. 
\] 
The map $\rho$ can thus be viewed as an anti-invariant Manin triple twist in the terminology of \cite{BeCr}.

The cobracket $\bdel$ associated to this Manin triple is $\rho$-anti-invariant in the sense that $\bdel(\rho(F x^r)) = - \left(\rho \ot \rho\right) (\bdel(F x^r))$: 
\eqn{ 
\langle \bdel(\rho(F_1 x^{r_1})), (F_2 x^{r_2}) \ot (F_3 x^{r_3}) \rangle & = \langle \rho(F_1 x^{r_1}), [F_2 x^{r_2}, F_3 x^{r_3}] \rangle \\
& = - \langle F_1 x^{r_1}, \rho([F_2 x^{r_2}, F_3 x^{r_3}]) \rangle \\
& = - \langle F_1 x^{r_1}, [\rho(F_2 x^{r_2}), \rho(F_3 x^{r_3})] \rangle = - \langle \bdel(F_1 x^{r_1}), \rho(F_2 x^{r_2}) \ot \rho(F_3 x^{r_3}) \rangle. 
}
It follows that $\bdel(\mfg[x]^{\rho}) \subset  (\check{\mfg}[x]^{\rho} \ot \mfg[x]^{\rho}) \op (\mfg[x]^{\rho} \ot \check{\mfg}[x]^{\rho})$. (Here $\check{\mfg}[x]^{\rho}$ is the eigenspace of $\mfg[x]$ for the eigenvalue $-1$ of $\rho$.) The restriction of $\bdel$ to $\mfg[x]^{\rho}$ can thus be decomposed as $\bdel = \btau + \btau'$ where $\btau$ is the composite of $\bdel$ with the projection onto $\check{\mfg}[x]^{\rho} \otimes \mfg[x]^{\rho}$ and similarly for $\btau'$. (Note that $\btau' = -\sigma\circ\btau$ where $\si : a\ot b \mapsto b\ot a$ is the flip operator. $\btau'$ yields a right Lie coideal structure.) 

The Lie algebra $\mfg[x]^{\rho}$ is spanned by the elements $F_{ij}^{\prime(\rho,m)} $ and using \eqref{Fprime} one can compute $\bdel(F_{ij}^{\prime(\rho,m)})$ and find that 
\eqn{
\bdel(F_{ij}^{\prime(\rho,r)}) & = \mysuml_{a=-n}^n \mysuml_{s=1}^{r-1} \big( F_{ia}^{(s)} \ot F_{aj}^{\prime(\rho,r-s)} - (-1)^{s} F_{aj}^{(s)} \ot F_{ia}^{\prime(\rho,r-s)} \\
& \qquad  \qquad \qquad  -  F_{aj}^{\prime(\rho,r-s)} \ot F_{ia}^{(s)}  + (-1)^s  F_{ia}^{\prime(\rho,r-s)} \ot F_{aj}^{(s)} \big) .
}
It follows that 
\eq{ 
\btau(F_{ij}^{\prime(\rho,r)}) = \mysuml_{a=-n}^n \mysuml_{s=1}^{r-1} \big( F_{ia}^{(s)} \ot F_{aj}^{\prime(\rho,r-s)} - (-1)^{s} F_{aj}^{(s)} \ot F_{ia}^{\prime(\rho,r-s)} \big).  \label{btauF} 
}

\begin{thrm}\label{T:quan}
The algebra $\wt{Y}(\mfg,\mcG)^{tw}$ is a homogeneous quantization of the left Lie coideal $(\mfU\mfg[x]^\rho,\btau)$.
\end{thrm}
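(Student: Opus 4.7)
The plan is to verify the two conditions defining a homogeneous quantization of a left Lie coideal in the sense of \cite{BeCr,BeRe}: (a) a flat graded $\C[\hbar]$-deformation of $\wt{Y}(\mfg,\mcG)^{tw}$ whose semiclassical limit is $\mfU\mfg[x]^\rho$, and (b) a left coaction whose $\hbar$-linear term recovers the cobracket $\btau$ of \eqref{btauF}.

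For (a), let $\wt{Y}_\hbar(\mfg,\mcG)^{tw}$ be the $\C[\hbar]$-subalgebra of $\wt{Y}(\mfg,\mcG)^{tw} \ot_\C \C[\hbar]$ generated by the rescaled elements $\tl\sigma^{(r)}_{ij} = \hbar^{r-1}\sigma^{(r)}_{ij}$, in analogy with Remark \ref{R:flat-def}. Flatness follows directly from the PBW theorem (Theorem \ref{Y:PBW}), and Proposition \ref{P:Yt=Ug[x]^q} together with \eqref{psi:F->si} identifies the quotient $\wt{Y}_\hbar(\mfg,\mcG)^{tw}/\hbar\wt{Y}_\hbar(\mfg,\mcG)^{tw}$ with $\mfU\mfg[x]^\rho$, the class of $\tl\sigma^{(r)}_{ij}\bmod\hbar$ corresponding to $\psi(F^{\prime(\rho,r)}_{ij}) + \delta_{r1}\bar g_{ij}$. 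The grading $\deg \tl\sigma^{(r)}_{ij} = r-1$ makes this quantization homogeneous.

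For (b), first derive the explicit coaction formula
\eq{\Delta(\sigma_{ij}(u)) = \mysum_{c,d=-n}^n \theta_{jd}\,\tau_{ic}(u-\ka/2)\,\tau_{-j,-d}(-u+\ka/2) \ot \sigma_{cd}(u) \nn}
starting from $\Delta(\tau_{ij}(u)) = \sum_k \tau_{ik}(u) \ot \tau_{kj}(u)$, the definition of $\Si(u)$, the centrality of the scalar $g_{ab}(u)$, and the identity $\theta_{jd}\theta_{db} = \theta_{jb}$ valid in both the orthogonal and symplectic cases. Writing $\tau_{ij}(u) = \delta_{ij} + \tau^+_{ij}(u)$ and $\sigma_{ij}(u) = g_{ij} + \sigma^+_{ij}(u)$, substituting and passing to rescaled generators produces, after using the classical symmetry $\bar\tau^{(s)}_{ij} = -\theta_{ij}\bar\tau^{(s)}_{-j,-i}$ to repackage the $O(1)$ remainder, an expansion of the form
\eq{\Delta(\tl\sigma^{(r)}_{ij}) \equiv 1 \ot \tl\sigma^{(r)}_{ij} + \psi(F^{\prime(\rho,r)}_{ij}) \ot 1 + \hbar\,\btau(\tl\sigma^{(r)}_{ij}) \pmod{\hbar^2}. \nn}
The $O(1)$ summand $\psi(F^{\prime(\rho,r)}_{ij}) \ot 1$ encodes the primitive coaction coming from the embedding $\mfU\mfg[x]^\rho \into \mfU\mfg[x]$, as one verifies via \eqref{Fprime}; the $O(\hbar)$ piece collects precisely the bilinear contributions $\tl\tau^{(s)}_{ia} \ot \tl\sigma^{(r-s)}_{cj}$ for $1 \le s \le r-1$, which under the identifications from Proposition \ref{P:Yt=Ug[x]^q} match \eqref{btauF} term by term. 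The shifts by $\ka/2$ contribute only $O(\hbar^2)$ corrections since $\ka$ is $\hbar$-independent.

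The main obstacle is the clean separation of the $O(1)$ classical-coaction piece from the $O(\hbar)$ cobracket: the non-trivial constant term $g_{ij}$ of $\sigma_{ij}(u)$ generates $O(1)$ contributions of the shape $\sum_c \tl\tau^{(r)}_{ic} g_{cj} \ot 1$ that must be reorganised, via \eqref{Fprime} and the $\tau$-anti-symmetry, into the primitive embedding piece $\psi(F^{\prime(\rho,r)}_{ij}) \ot 1$ rather than being mistaken for genuine cobracket contributions. Once this bookkeeping is carried out, matching the remaining $O(\hbar)$ expression with \eqref{btauF} is immediate and closely parallels the untwisted calculation \eqref{cobrack1}--\eqref{cobrack2} recalled at the beginning of this subsection, together with the loop-twisted analogue treated in \cite{BeRe}.
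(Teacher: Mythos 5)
Your architecture is the paper's: expand the coideal coproduct of $\Si(u)$, rescale by $\hbar$, strip off a primitive part, and identify the $\hbar$-linear remainder with $\btau$ via Proposition \ref{P:Yt=Ug[x]^q}. But your central displayed congruence is not correct as written, for two related reasons. First, the primitive part to be subtracted in the first tensor factor is not the classical element $\psi(F^{\prime(\rho,r)}_{ij})$ but the image $\phi(\tl{\sigma}^{(r)}_{ij})$ of $\tl{\sigma}^{(r)}_{ij}$ under the inclusion $\wt{Y}_\hbar(\mfg,\mcG)^{tw} \subset \wt{Y}_\hbar(\mfg)$ (this is how the left Lie coideal condition is phrased in \cite{BeRe}, Definition 5.3(4)). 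The two differ at order $\hbar$: besides its leading term $\mysum_a (\tl{\tau}^{(r)}_{ia} g_{aj} + (-1)^r\theta_{ja} g_{ia}\tl{\tau}^{(r)}_{-j,-a})$, the element $\phi(\tl{\sigma}^{(r)}_{ij})$ carries the order-$\hbar$ contributions
$$
\hbar\,(r-1)\tfrac{\ka}{2}\mysum_a \big(\tl{\tau}^{(r-1)}_{ia} g_{aj} - (-1)^{r}\theta_{ja} g_{ia}\tl{\tau}^{(r-1)}_{-j,-a}\big), \qquad
\hbar \mysum_{a,b}\mysum_{s=1}^{r-1}\theta_{jb}(-1)^s \tl{\tau}^{(r-s)}_{ia} g_{ab}\tl{\tau}^{(s)}_{-j,-b},
$$
plus a term involving $\bar g$. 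Exactly these expressions also occur, tensored with $1$, in the $\hbar$-expansion of $\Delta(\tl{\sigma}^{(r)}_{ij})$, and the computation works only because they cancel in $\Delta(\tl{\sigma}^{(r)}_{ij}) - \phi(\tl{\sigma}^{(r)}_{ij})\ot 1 - 1\ot\tl{\sigma}^{(r)}_{ij}$. Subtracting only (a lift of) $\psi(F^{\prime(\rho,r)}_{ij})\ot 1$ leaves these spurious $(\cdots)\ot 1$ terms in the $\hbar$-linear coefficient, which then does not equal $\btau(\tl{\sigma}^{(r)}_{ij})$, since $\btau$ lands in $\check{\mfg}[x]^{\rho}\ot\mfg[x]^{\rho}$ and has no such component.

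Second, your justification for discarding the shifts --- ``the shifts by $\ka/2$ contribute only $O(\hbar^2)$ corrections'' --- is false. The coefficient of $u^{-r}$ in $\tau_{ia}(u/\hbar - \ka/2)$ is $\hbar\,\tl{\tau}^{(r)}_{ia} + \hbar^2 (r-1)\tfrac{\ka}{2}\tl{\tau}^{(r-1)}_{ia} + \cdots$, so after the overall division by $\hbar$ the shift produces a genuine order-$\hbar$ term, namely the first display above; it disappears from the final answer only through the cancellation against $\phi(\tl{\sigma}^{(r)}_{ij})\ot 1$, not because it is negligible. The same goes for the bilinear term $\tl{\tau}\,g\,\tl{\tau}\ot 1$, which your accounting (collecting only the mixed terms $\tl{\tau}^{(s)}_{ia}\ot\tl{\sigma}^{(r-s)}_{aj}$) leaves unexplained. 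Finally, after the correct subtraction the remainder is $\hbar\mysum_{a}\mysum_{s=1}^{r-1}\big(\tl{\tau}^{(s)}_{ia}\ot(\tl{\sigma}^{(r-s)}_{aj}-\delta_{s,r-1}\bar g_{aj}) - (-1)^s\tl{\tau}^{(s)}_{aj}\ot(\tl{\sigma}^{(r-s)}_{ia}-\delta_{s,r-1}\bar g_{ia})\big) + \mcO(\hbar^2)$; the $\bar g$ corrections at $s=r-1$, which you omit, are needed because it is $\tl{\sigma}^{(1)}_{aj} - \bar g_{aj}$, not $\tl{\sigma}^{(1)}_{aj}$, that reduces to $F^{\prime(\rho,1)}_{aj}$ modulo $\hbar$ (cf.\ \eqref{psi:F->si}). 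With these corrections the matching with \eqref{btauF} goes through as you intend.
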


\begin{proof}
We have to verify item $\mathit{(4)}$ in Definition 5.3 of \cite{BeRe}: for items $\mathit{(1)-(3)}$, see \cite[Theorem~3.3]{AACFR}, Remark \ref{R:flat-def} and Corollary \ref{C:coideal2}.
Set $\wt \Sigma(u)=(\Sigma(u/\hbar)-\mcG(u/\hbar))/\hbar$ and let $\tl \sigma_{ij}(u)$ denote the matrix elements of $\wt \Sigma(u)$. Then, for the coefficient $\tl{\sigma}^{(r)}_{ij}$ of $u^{-r}$ in $\tl{\sigma}_{ij}(u)$, set
$$
\wt{\btau}(\tl{\sigma}^{(r)}_{ij})  = \frac{1}{\hbar}\left( \Delta(\tl{\sigma}^{(r)}_{ij}) - \left(\phi(\tl{\sigma}^{(r)}_{ij})\ot1+1\ot\tl{\sigma}^{(r)}_{ij}\right) \right) \in Y_{\hbar}(\mfg) \ot \wt{Y}_\hbar(\mfg,\mcG)^{tw}[[u^{-1}]].
$$
In this proof, $\phi$ denotes the inclusion $\wt{Y}_{\hbar}(\mfg,\mcG)^{tw} \subset \wt{Y}_{\hbar}(\mfg)$. Using \eqref{cop:s(u)}, for $r\ge 1$ we find
\eqa{ \label{D(si):hbar}
\Delta(\tl{\sigma}^{(r)}_{ij}) 
&= \mysum_{a=-n}^n\left( \tl{\tau}^{(r)}_{ia}g_{aj} + (-1)^r \theta_{ja}\,g_{ia}\,\tl{\tau}^{(r)}_{-j,-a} \right) \ot 1 + 1\ot\tl{\sigma}^{(r)}_{ij} \el
& \quad  + \hbar \,(r-1)\,\frac{\ka}{2} \mysum_{a=-n}^n\left( \tl{\tau}^{(r-1)}_{ia}g_{aj} - (-1)^{r} \theta_{ja}\,g_{ia}\,\tl{\tau}^{(r-1)}_{-j,-a} \right) \ot 1 \el
& \quad + \hbar \mysum_{a,b=-n}^n \mysum_{s=1}^{r-1} \theta_{jb} (-1)^s \tl{\tau}^{(r-s)}_{ia} g_{ab} \tl{\tau}^{(s)}_{-j,-b} \ot 1 \el
& \quad + \hbar \mysum_{a=-n}^n \mysum_{s=1}^{r-1} \left( \tl{\tau}^{(s)}_{ia} \ot \tl{\sigma}^{(r-s)}_{aj} + (-1)^s\theta_{ja} \tl{\tau}^{(s)}_{-j,-a}\ot \tl{\sigma}^{(r-s)}_{ia}\right)   + \mcO(\hbar^2) ,
}
where $\mcO(\hbar^2)$ denotes elements of quadratic and higher order in $\hbar$. 
Then, by observing that, for $r\ge1$,
\eqn{
\phi(\tl{\sigma}^{(r)}_{ij}) &= \mysum_{a=-n}^n\left( \tl{\tau}^{(r)}_{ia}g_{aj} + (-1)^r \theta_{ja}\,g_{ia}\,\tl{\tau}^{(r)}_{-j,-a} \right) \\
& \quad + \hbar \,(r-1)\,\frac{\ka}{2} \mysum_{a=-n}^n\left( \tl{\tau}^{(r-1)}_{ia}g_{aj} - (-1)^{r} \theta_{ja}\,g_{ia}\,\tl{\tau}^{(r-1)}_{-j,-a} \right) \\
& \quad  + \hbar \mysum_{a=-n}^n  \big(\tl\tau^{(r-1)}_{ia} \bar{g}_{aj} - (-1)^{r} \theta_{aj} \bar{g}_{ia} \tl\tau^{(r-1)}_{-j,-a} \big) \\
& \quad + \hbar \mysum_{a,b=-n}^n \mysum_{s=1}^{r-1} \theta_{jb} (-1)^s \tl{\tau}^{(r-s)}_{ia} g_{ab} \tl{\tau}^{(s)}_{-j,-b}  + \mcO(\hbar^2)
}
and using the symmetry relation $\tl{\tau}^{(r)}_{ij} \equiv -\theta_{ij}\,\tl{\tau}^{(r)}_{-j,-i} \;({\rm mod}\,\hbar)$, we obtain, for $r\ge 1$,
\eqn{
\wt{\btau}(\tl{\sigma}^{(r)}_{ij}) & = \mysum_{a=-n}^n \mysum_{s=1}^{r-1} \left( \tl{\tau}^{(s)}_{ia} \ot (\tl{\sigma}^{(r-s)}_{aj} - \delta_{s,r-1}\bar{g}_{aj}) - (-1)^s \tl{\tau}^{(s)}_{aj}\ot (\tl{\sigma}^{(r-s)}_{ia} - \delta_{s,r-1}\bar{g}_{ia})) \right) +\mcO(\hbar).
}
Since $\tl{\tau}^{(s)}_{ia} \equiv F_{ia}^{(s)} \; ({\rm mod}\, \hbar)$ and $\tl{\sigma}^{(r-s)}_{aj}  - \delta_{r-s,1} \bar{g}_{aj} \equiv F_{aj}^{\prime(\rho,r-s)} \; ({\rm mod}\, \hbar)$ (see \eqref{psi:F->si}), we can conclude from \eqref{btauF} that $\wt\btau \equiv \btau \; ({\rm mod}\, \hbar)$.
\end{proof}

\section{Reflection algebras} \label{S:ra}

In this section, we introduce a reflection algebra $\mcB(\mcG)$ defined via the $R$-matrix given by \eqref{R(u)}, the matrix $\mcG$ and an additional symmetry relation. We show that the extended twisted Yangian $X(\mfg,\mcG)^{tw}$ given by Definition \ref{D:X(g,G)} is isomorphic to the algebra $\mcB(\mcG)$, but first we prove a similar isomorphism for $Y(\mfg,\mcG)^{tw}$ and a quotient of $\mcB(\mcG)$. The usual notation $\pm$ and $\mp$ will distinguish orthogonal (upper sign) and symplectic (lower sign) cases. The lower sign in $(\pm)$  will distinguish the cases CI and DIII from the other cases. 

Using solutions of the reflection equation, quantum analogues of symmetric spaces were introduced in \cite{NoSu}. By analogy, we may think of twisted Yangians as affine and quantized versions of symmetric spaces.

\begin{defn}
The reflection algebra $\mcB(\mcG)$ is the unital associative algebra generated by elements $\mss_{ij}^{(r)}$ for $-n\le i,j\le n$, $r\in\Z_{\ge 0}$ satisfying the reflection equation 
\eq{ \label{RE}
R(u-v)\,\msS_1(u)\,R(u+v)\,\msS_2(v) = \msS_2(v)\,R(u+v)\,\msS_1(u)\,R(u-v) ,
}
and the symmetry relation
\eq{
\msS^t(u) = (\pm)\,\msS(\ka-u) \pm \frac{\msS(u)-\msS(\ka-u)}{2u-\ka} + \frac{\Tr(\mcG(u))\,\msS(\ka-u) - \Tr(\msS(u))\cdot I }{2u-2\ka} \,, \label{RES}
}
where the $S$-matrix $\msS(u)$ is defined by
\eq{ \label{XS(u)}
\msS(u) = \mysum_{i,j=-n}^n E_{ij}\ot \mss_{ij}(u)\in \End(\C^N)\ot \mcB(\mcG)[[u^{-1}]], 
\quad\; \mss_{ij}(u) = \mysum_{r=0}^{\infty} \mss_{ij}^{(r)}\,u^{-r} , 
\quad\; \mss^{(0)}_{ij} = g_{ij} . 
}
\end{defn}

The reflection equation \eqref{RE} is equivalent to the following set of relations:

\eqa{ \label{[s,s]}
[\,\mss_{ij}(u),\mss_{kl}(v)]&=\frac{1}{u-v}\Big(\mss_{kj}(u)\,\mss_{il}(v)-\mss_{kj}(v)\,\mss_{il}(u)\Big)\el
{}& +\frac{1}{u+v} \mysuml_{a=-n}^n \Big(\delta_{kj}\,\mss_{ia}(u)\,\mss_{al}(v)-
\delta_{il}\,\mss_{ka}(v)\,\mss_{aj}(u)\Big)\el
{}& -\frac{1}{u^2-v^2} \mysuml_{a=-n}^n \delta_{ij}\Big(\mss_{ka}(u)\,\mss_{al}(v) - \mss_{ka}(v)\,\mss_{al}(u)\Big) \el	
{}& -\frac{1}{u-v-\ka} \mysuml_{a=-n}^n \Big( \delta_{k,-i}\,\theta_{i a}\, \mss_{a j}(u)\, \mss_{-a,l}(v) - \delta_{l,-j} \,\theta_{a j}\, \mss_{k,-a}(v)\, \mss_{i a}(u) \Big) \el
{}& -\frac{1}{u+v-\ka}\, \Big( \theta_{j,-k}\,\mss_{i,-k}(u)\, \mss_{-j,l}(v) - \theta_{i,-l}\,\mss_{k,-i}(v)\, \mss_{-l,j}(u) \Big) \nn \el
{}& +\frac{1}{(u+v) (u-v-\ka)}\, \theta_{i,-j} \mysuml_{a=-n}^n  \Big( \delta_{k,-i}\,\mss_{-j,a}(u)\, \mss_{a l}(v) - \delta_{l,-j}\,\mss_{k a}(v)\, \mss_{a,-i}(u) \Big) \el
{}& +\frac{1}{(u-v) (u+v-\ka)}\, \theta_{i,-j} \Big( \mss_{k,-i}(u)\, \mss_{-j,l}(v)-\mss_{k,-i}(v)\, \mss_{-j,l}(u) \Big) \el
{}& -\frac{1}{(u-v-\ka) (u+v-\ka)}\, \theta_{i j}  \mysuml_{a=-n}^n \Big( \delta_{k,-i}\, \mss_{a a}(u)\, \mss_{-j,l}(v) - \delta_{l,-j}\,\mss_{k,-i}(v)\, \mss_{a a}(u) \Big) .
}

The symmetry relation \eqref{RES} is equivalent to
\eqa{ \label{sym}
\theta_{ij}\mss_{-j,-i}(u) = (\pm)\,\mss_{ij}(\ka-u) \pm \frac{\mss_{ij}(u)-\mss_{ij}(\ka-u)}{2u-\ka} + \frac{\Tr(\mcG(u))\,\mss_{ij}(\kappa-u) - \delta_{ij} \mysum_{k=-n}^n \mss_{kk}(u)  }{2u-2\ka} .
}

\begin{rmk}
Let us comment on the choice of the reflection equation \eqref{RE} for the algebra $\mcB(\mcG)$. Consider the twisted reflection equation
\eq{ \label{tRE}
R(u-v)\,{\msS}'_1(u)\,R^t(-u-v)\,{\msS}'_2(v) = {\msS}'_2(v)\,R^t(-u-v)\,{\msS}'_1(u)\,R(u-v) .
}
Observe that $R^t(u)=R(\ka-u)$. Then it is possible to see that \eqref{tRE} is equivalent to \eqref{RE} upon identification ${\msS}'(u)=\msS(u+\ka/2)$. Moreover, the choice of \eqref{RE} has motivated the form of the $S$-matrix $S(u)$ in \eqref{S=TGT}. For the twisted reflection equation \eqref{tRE} the natural choice would be ${S}'(u)=T(u)\,\mcG(u+\ka/2)\,T^t(-u)$, the unitarity relation would become ${S}'(u)\,{S}'(-\ka-u)=I$.
\end{rmk}

\begin{rmk} \label{R:new-feat}
The set of relations \eqref{[s,s]} and the symmetry relation \eqref{sym} are the analogues of those obtained in \cite{MNO}. In particular, the first three lines of \eqref{[s,s]} coincide with (3.7.2) in \textit{loc.\@ cit} and the first three terms of \eqref{sym} with $\kappa=0$ and the plus sign in $(\pm)$ coincide with (3.6.4) in \textit{loc.\@ cit}. The additional terms are essentially new features of the twisted Yangians of types B,C,D.
\end{rmk}

Consider an arbitrary even power series $g(u)\in1+u^{-2}\C[[u^{-2}]]$ and a matrix $A\in G$ such that $A\mcG A^t = \mcG$. The maps
\eq{
\nu_g \;:\; \msS(u) \mapsto g(u-\ka/2)\,\msS(u) \quad\text{and}\quad \al_A\;:\;\msS(u) \mapsto A \msS(u) A^t,\label{nu_g} 
}
are automorphisms of $\mcB(\mcG)$, as can be seen from the symmetry relation \eqref{RES}. Furthermore, the map $\msS(u) \mapsto \msS^{t}(u)$ is an anti-automorphism of $\mcB(\mcG)$. This is verified by taking the transpose of the reflection equation \eqref{RE} and using the transpose symmetry of the $R$-matrix, $R^{t_1 t_2}(u)=R(u)$. The compatibility with the symmetry relation \eqref{RES} is straightforward.

By dropping the symmetry relations one obtains an extended reflection algebra $\mcX\mcB(\mcG)$. We will consider this extension in Section \ref{Sec:5}. The following lemmas will be needed to establish a homomorphism from the algebra $\mcB(\mcG)$ to the extended twisted Yangian $X(\mfg,\mcG)^{tw}$.

\subsection{Solutions of the reflection and symmetry equations}

\begin{lemma} \label{L:RPRP} 
The matrix $\mcG(u)$ is a solution of the reflection equation
\eq{ \label{P-RE}
R_{12}(u-v)\,\mcG_1(u)\,R_{12}(u+v)\,\mcG_2(v) = \mcG_2(v)\,R_{12}(u+v)\,\mcG_1(u)\,R_{12}(u-v) .
}
\end{lemma}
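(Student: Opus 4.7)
The lemma splits into two cases according to the two kinds of $\mcG(u)$ introduced in Definition \ref{D:X(g,G)}. For the first kind, $\mcG(u)=\mcG$ is a constant matrix and the statement reduces to the constant reflection equation; for the second kind, $\mcG(u) = (I - cu\,\mcG)(1-cu)^{-1}$ carries spectral dependence but in those cases $\mcG^{2}=I$, which allows a reduction to the constant case dressed by a scalar factor.

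\textbf{Step 1 (first kind).} Expand $R(w) = I - P/w + Q/(w-\ka)$ and compare both sides of \eqref{P-RE}. The two basic bookkeeping identities are $P\mcG_{1}=\mcG_{2}P$ (which in particular shows $P$ commutes with $\mcG_{1}\mcG_{2}$) and $\mcG\mcG^{t}=I$; the latter, together with $Q=\sum\theta_{ij}E_{ij}\otimes E_{-i,-j}$, yields $Q\,\mcG_{1}\mcG_{2}=\mcG_{1}\mcG_{2}\,Q=Q$. In the BCD0, CI, DIII and symmetric BDI/CII ($p=q$) cases these relations are immediate from the explicit form of $\mcG$. With them in hand, every monomial in $P$, $Q$, $\mcG_{1}$, $\mcG_{2}$ that appears on the left matches a corresponding monomial on the right: the purely $P$- and $Q$-terms collapse to the Yang-Baxter equation \eqref{YBE} for $R$ itself, while the mixed terms cancel by the above commutation rules.

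\textbf{Step 2 (second kind).} Here $\mcG^{2}=I$, and a short computation gives $\mcG(u)\mcG(-u)=I$. The plan is to decompose
\[
\mcG(u) = f(u)\,I + \bigl(1-f(u)\bigr)\mcG, \qquad f(u) = \tfrac{1}{1-cu},
\]
and substitute this into \eqref{P-RE}. Each side then becomes a sum of four terms, indexed by whether we pick $I$ or $\mcG$ in positions 1 and 2. The $(\mcG,\mcG)$-terms reduce, by Step 1, to the constant reflection equation. The $(I,I)$-terms reduce to the Yang-Baxter equation \eqref{YBE}. The mixed $(I,\mcG)$- and $(\mcG,I)$-terms must cancel between the two sides; this is a partial-fraction identity in $u,v$ that uses
\[
f(u)\bigl(1-f(v)\bigr) - f(v)\bigl(1-f(u)\bigr) = -c(u-v)\,f(u)f(v)
\]
paired with $R(u-v)\,\mcG_{1}R(u+v) - \mcG_{2}R(u+v)\mcG_{1}R(u-v)\mcG_{2}$-type combinations that simplify via $P\mcG_{1}=\mcG_{2}P$ and the $Q$-identity of Step 1. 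The normalization $c=4/(p-q)$ enters precisely through $\Tr\mcG=p-q$, which is the residue appearing when one projects onto the rank-one image of $Q$.

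\textbf{Main obstacle.} The bookkeeping in Step 2 is the only substantive difficulty: one has to track sixteen terms on each side, organize them by the $\mcG$-pattern, and verify that the coefficients (rational functions in $u,v,c$) agree via partial fractions. A convenient shortcut is to observe that the identity is a rational identity in $u,v$ whose denominators are among $(u\pm v),(u\pm v-\ka),(1-cu),(1-cv)$; it is then enough to check the residues at each pole and the asymptotic limit $u,v\to\infty$. The residue at $1-cu=0$, for instance, reduces on both sides to a manipulation of $R(\pm v + 1/c)\mcG_{2}$ that closes by $\mcG^{2}=I$, and the limit $u,v\to\infty$ recovers the constant case of Step 1.
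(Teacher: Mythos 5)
Your overall strategy (expand $R$, reduce \eqref{P-RE} to identities among $P$, $Q$, $\mcG_1$, $\mcG_2$, and treat the two kinds of $\mcG(u)$ separately) is the one the paper follows, but as written the argument has a genuine gap in each step. In Step 1 the two "commutation rules" you invoke --- $P\mcG_1=\mcG_2P$ and $Q\mcG_1\mcG_2=\mcG_1\mcG_2Q$ --- do not suffice to cancel the mixed terms. The coefficient of $\tfrac{1}{(u-v-\ka)(u+v-\ka)}$ forces the identity $Q\,\mcG_1\,Q\,\mcG_2=\mcG_2\,Q\,\mcG_1\,Q$, and since $Q\,\mcG_1\,Q=\Tr(\mcG)\,Q$ for diagonal $\mcG$ (using $\theta_{ij}\theta_{jl}=\theta_{il}$), this identity amounts to $\Tr(\mcG)\,(Q\,\mcG_2-\mcG_2\,Q)=0$, which is \emph{not} a formal consequence of $\mcG$ lying in $G$: it holds only because the first-kind matrices are either traceless (CI, DIII, and BDI/CII with $p=q$) or equal to $I$ (BCD0). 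One similarly has to check $\mcG_1Q\mcG_2=\mcG_2Q\mcG_1$, which again relies on $g_{ii}=(\pm)\,g_{-i,-i}$ for these specific matrices; and in the BDI case $\mcG$ is not even diagonal, so these $Q$-identities need an extra argument (the paper conjugates by $C_1C_2$ to reduce to the diagonal $\wt\mcG$). Also, the two-tensor-factor equation never "collapses to" \eqref{YBE}; what is used for the $(I,I)$ block is just $[P,Q]=0$.

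The same omission breaks Step 2. For $\mcG$ of the second kind one has $\Tr(\mcG)=p-q\neq0$, so the \emph{constant} reflection equation for $\mcG$ is false: its defect is exactly $\tfrac{\Tr(\mcG)}{(u-v-\ka)(u+v-\ka)}\,(Q\,\mcG_2-\mcG_2\,Q)$ in the diagonalized picture. Your claim that the $(\mcG,\mcG)$ block "reduces, by Step 1, to the constant reflection equation" is therefore wrong, and your proposed bookkeeping --- each of the four blocks vanishing or cancelling internally --- cannot work: if it did, the value of $c$ would be irrelevant, contradicting your own (correct) remark that $c=4/(p-q)$ is forced. The actual mechanism, which is the content of the paper's identity \eqref{L31:B9}, is that the nonzero defect of the $(\mcG,\mcG)$ block is compensated by cross terms of the form $Q(\mcG_1+\mcG_2)-(\mcG_1+\mcG_2)Q$ coming from the rational prefactors, and $c=4/(p-q)$ is precisely the normalization that makes these two contributions cancel. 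Your pole-and-residue shortcut could in principle be carried out, but as it stands the proof does not establish the lemma in either case.
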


\begin{proof}  
Let $\mcG(u)$ be of the first kind. Then it is enough to prove the following equalities:
\eq{ \label{L31:A1}
\left(1 - \frac{P}{u-v} \right) \mcG_1 \left( 1 - \frac{P}{u+v} \right) \mcG_2 = \mcG_2 \left(1 - \frac{P}{u+v} \right) \mcG_1 \left(1 - \frac{P}{u-v} \right) 
}
and
\vspace{-.5em}
\if0
\eqa{
P \mcG_1 Q \mcG_2 &= \mcG_2 Q \mcG_1 P , \label{L31:A2} \\
Q \mcG_1 P \mcG_2 &= \mcG_2 P \mcG_1 Q , \label{L31:A3} \\
\mcG_1 Q \mcG_2 &= \mcG_2 Q \mcG_1 , \label{L31:A4} \\
Q \mcG_1 \mcG_2 &= \mcG_2 \mcG_1 Q , \label{L31:A5} \\
Q \mcG_1 Q \mcG_2 &= \mcG_2 Q \mcG_1 Q . \label{L31:A6}
}
$\circ$ \eqref{L31:A1} can be expanded and checked directly using $P \mcG_1 = \mcG_2 P$, $P \mcG_2 = \mcG_1 P$ and $\mcG^2=I$. \vspace{0.25em}\\
$\circ$ \eqref{L31:A2} follows from $P \mcG_1 = \mcG_2 P$, $P \mcG_2 = \mcG_1 P$ and $PQ=QP$. \vspace{0.25em}\\
$\circ$ \eqref{L31:A3} follows by similar arguments as \eqref{L31:A2} and unitarity $\mcG^2=I$. \vspace{0.25em}\\
$\circ$ \eqref{L31:A4} can be checked directly.
\fi
\eqa{
& \qquad\qquad P \mcG_1 Q \mcG_2 = \mcG_2 Q \mcG_1 P , \quad
Q \mcG_1 P \mcG_2 = \mcG_2 P \mcG_1 Q , \label{L31:A3} \\
& \mcG_1 Q \mcG_2 = \mcG_2 Q \mcG_1 , \quad
Q \mcG_1 \mcG_2 = \mcG_2 \mcG_1 Q , \quad
Q \mcG_1 Q \mcG_2 = \mcG_2 Q \mcG_1 Q . \qquad\label{L31:A6}
}
$\circ$ \eqref{L31:A1} can be expanded and checked directly using $P \mcG_1 = \mcG_2 P$, $P \mcG_2 = \mcG_1 P$ and $\mcG^2=I$. \vspace{0.25em}\\
$\circ$ \eqref{L31:A3} follows by similar arguments as \eqref{L31:A1} and $PQ=QP$. \vspace{0.25em}\\
$\circ$ The first identity in \eqref{L31:A6} can be checked directly:
$$
\mcG_1 Q \mcG_2 = \mysum_{i,j=-n}^n \theta_{ij} g_{ii} g_{-j,-j} E_{ij} \ot E_{-i,-j} = \mysum_{i,j=-n}^n \theta_{ij} g_{-i,-i} g_{jj} E_{ij} \ot E_{-i,-j} = \mcG_2 Q \mcG_1 .
$$
$\circ$ The second identity in \eqref{L31:A6} is
$$
Q \mcG_1 \mcG_2 = \mysum_{i,j=-n}^n \theta_{ij} g_{jj} g_{-j,-j} E_{ij} \ot E_{-i,-j} = Q = \mysum_{i,j=-n}^n \theta_{ij} g_{ii} g_{-i,-i} E_{ij} \ot E_{-i,-j}  = \mcG_2 \mcG_1 Q.
$$
$\circ$ For the third identity in \eqref{L31:A6}, we have
\eqa{
Q \mcG_1 Q \mcG_2 & = \left( \mysum_{i,j=-n}^n \theta_{ij} g_{jj} E_{ij} \ot E_{-i,-j} \right) \left( \mysum_{k,l=-n}^n \theta_{kl} g_{-l,-l} E_{kl} \ot E_{-k,-l} \right) \el
& = \mysum_{i,l=-n}^n \left( \mysum_{j=-n}^n \theta_{ij} \theta_{jl}  g_{jj} \right) g_{-l,-l} E_{il} \ot E_{-i,-l} \,, \label{QGQG}
}
and similarly
\eqa{
\mcG_2 Q \mcG_1 Q 
= \mysum_{i,l=-n}^n \left( \mysum_{j=-n}^n \theta_{ij} \theta_{jl}  g_{jj} \right) g_{-i,-i} E_{il} \ot E_{-i,-l} \,. \label{GQGQ}
}
Since $\theta_{ij} \theta_{jl} = \theta_{il}$, the sum $\mysum_{j=-n}^n \theta_{ij} \theta_{jl}  g_{jj}$ vanishes when $\mcG$ is traceless, which is true in cases CI, DIII and also in cases DI, CII when $p=q$, so $Q \mcG_1 Q \mcG_2 = 0 = \mcG_2 Q \mcG_1 Q$ in those cases. If $\mcG=I$, we can see that $Q \mcG_1 Q \mcG_2 = \mcG_2 Q \mcG_1 Q$ is true also.

Now let $\mcG(u)$ be of the second kind, namely $\mcG(u) = (I-c\,u\,\mcG)(1-c\,u)^{-1}$ with $c=\frac{4}{p-q}$ and $p>q$. Then it is enough to prove the following equalities:
\eq{ 
\left(1 - \frac{P}{u-v} \right) \mcG_1(u) \left( 1 - \frac{P}{u+v} \right) \mcG_2(v) = \mcG_2(v) \left(1 - \frac{P}{u+v} \right) \mcG_1(u) \left(1 - \frac{P}{u-v} \right) , \label{L31:B1} 
}
and
\eqa{
\mcG_2 Q P + Q P \mcG_2 &= \mcG_2 P Q + P Q \mcG_2 , \label{L31:B2} \\
P \mcG_1 Q \mcG_2 + Q \mcG_1 P \mcG_2 &= \mcG_2 P \mcG_1 Q + \mcG_2 Q \mcG_1 P, \label{L31:B3} \\
P \mcG_1 Q \mcG_2 + \mcG_2 P \mcG_1 Q  &= \mcG_2 Q \mcG_1 P + Q \mcG_1 P \mcG_2 , \label{L31:B4}\\
2 Q \mcG_1 P + \mcG_2 P Q + \mcG_2 Q P &= 2 P \mcG_1 Q + P Q \mcG_2 + Q P \mcG_2 , \label{L31:B5}\\
P Q \mcG_2 + Q P \mcG_2 + \mcG_2 Q Q &= \mcG_2 P Q + \mcG_2 Q P + 2 \kappa (\mcG_2 Q-Q \mcG_2) + Q Q \mcG_2, \label{L31:B6}\\
\mcG_2 \mcG_1 Q + \mcG_1 Q \mcG_2 &= \mcG_2 Q \mcG_1 + Q \mcG_1 \mcG_2 , \label{L31:B7}\\
\mcG_2 \mcG_1 Q + \mcG_2 Q \mcG_1 &= \mcG_1 Q \mcG_2 + Q \mcG_1 \mcG_2 , \label{L31:B8}\\
2 Q(\mcG_1 + \mcG_2) - 2(\mcG_1 + \mcG_2) Q &= c(\mcG_2 P \mcG_1 Q+ \mcG_2 Q \mcG_1 P + Q \mcG_1 Q \mcG_2 \el
& \qquad - P \mcG_1 Q \mcG_2 - Q \mcG_1 P \mcG_2 - \mcG_2 Q \mcG_1 Q) . \label{L31:B9}
}
$\circ$ \eqref{L31:B1} can be expanded and checked directly using $P \mcG_1 = \mcG_2 P$, $P \mcG_2 = \mcG_1 P$. \vspace{0.25em}\\
$\circ$ \eqref{L31:B2}-\eqref{L31:B5} follow by $P \mcG_1 = \mcG_2 P$, $P \mcG_2 = \mcG_1 P$, $QP=PQ$ and $\mcG^2=I$. \vspace{0.25em}\\
$\circ$ By similar arguments \eqref{L31:B6} is equivalent to
$$
2P Q \mcG_2 - 2 \mcG_2 P Q = (N - 2 \kappa) (Q \mcG_2 - \mcG_2 Q) .
$$
Recall that $QP=PQ=\pm Q$ and $\ka=N/2\mp1$. Thus $2PQ=\pm2=N-2\ka$, and the equality holds

\noindent $\circ$ \eqref{L31:B7} and \eqref{L31:B8} are essentially the same and can be checked directly. They are true if $\mcG$ is the diagonal matrix in type CII. This follows by \eqref{L31:A6}. We only need to show that they are true for the BDI case. Recall that $\wt{\mcG} = C^{-1} \mcG C$ is a diagonal matrix and observe that $C_1^{-1} C_2^{-1} Q C_2 C_1 = \wt{Q}$ where $\wt{Q}= P^{'_2} = P^{'_1}$. ($A'$ denotes the transpose of the matrix $A$ with respect to the main diagonal; the index in $P^{'_1}$ indicates on which copy of $\End(\C^N)$ the transpose is taken.) Indeed, using that $K= C C'$ and $Q=P^{t_2} = K_2 P^{'_2} K_2$, we obtain 
\eqa{
C_1^{-1} C_2^{-1} Q C_2 C_1 & = C_1^{-1} C_2^{-1} K_2 P^{'_2} K_2 C_2 C_1 = C_1^{-1} C_2^{-1} C_2 C_2' P^{'_2} C_2 C_2'  C_2 C_1  \el
& = C_1^{-1} C_2' P^{'_2} (C_2^{-1})^{'_2} C_1 = (C_1^{-1}  C_2^{-1} P C_2 C_1)^{'_2} = \wt{Q} . \nonumber
}
Conjugating \eqref{L31:B7} by $C_1^{-1}$ and $C_2^{-1}$, we deduce that it is equivalent to $\wt{\mcG}_2 \wt{\mcG}_1 \wt{Q} + \wt{\mcG}_1 \wt{Q} \wt{\mcG}_2 = \wt{\mcG}_2 \wt{Q} \wt{\mcG}_1 + \wt{Q} \wt{\mcG}_1 \wt{\mcG}_2$, which can be checked as \eqref{L31:A6} since $\wt{\mcG}$ is diagonal with entries equal to $\pm 1$. The same argument gives \eqref{L31:B8}.

\vspace{0.25em}

\noindent $\circ$ The last equality, \eqref{L31:B9}, by similar arguments as before, is equivalent to
$$
2 Q(\mcG_1 + \mcG_2) - 2(\mcG_1 + \mcG_2) Q = c(Q \mcG_1 Q \mcG_2  - \mcG_2 Q \mcG_1 Q) .
$$
Conjugating by $C_1^{-1}$ and $C_2^{-1}$, we deduce that \eqref{L31:B9} is equivalent to 
$$
2 \wt{Q}(\wt{\mcG}_1 + \wt{\mcG}_2) - 2(\wt{\mcG}_1 + \wt{\mcG}_2) \wt{Q} = c(\wt{Q} \wt{\mcG}_1 \wt{Q} \wt{\mcG}_2  - \wt{\mcG}_2 \wt{Q} \wt{\mcG}_1 \wt{Q}) .
$$
$\wt{\mcG}$ is diagonal with entries $\wt{g}_{ii}=\pm 1$. Then
\eq{ \label{QGG+GGQ}
2 \wt{Q}(\wt{\mcG}_1 + \wt{\mcG}_2) - 2(\wt{\mcG}_1 + \wt{\mcG}_2) \wt{Q} = 4 \wt{Q} \wt{\mcG}_1 - 4 \wt{\mcG}_1 \wt{Q} = 4 \mysum_{i,j=-n}^n \,(\wt{g}_{jj}-\wt{g}_{ii})\, E_{ij}\ot E_{ij},
}
Similarly to \eqref{QGQG} and \eqref{GQGQ}, we have
\eq{
c( \wt{Q} \wt{\mcG}_1 \wt{Q} \wt{\mcG}_2 - \wt{\mcG}_2 \wt{Q} \wt{\mcG}_1 \wt{Q}) = c \left( \mysum_{k=-n}^n \wt{g}_{kk} \right) \mysum_{i,j=-n}^n (\wt{g}_{jj}-\wt{g}_{ii}) E_{ij} \ot E_{ij} \,, \label{QGQG-GQGQ} 
}
and $\mysum_{k=-n}^n \wt{g}_{kk} = p - q$, so the equality holds if $c=\frac{4}{p-q}$. 
\end{proof} 

\begin{lemma} \label{L:RSRS} 
The $S$-matrix $S(u)$ satisfies the reflection equation 
\eq{
R(u-v)\, S_1(u)\, R(u+v)\, S_2(v) = S_2(v)\, R(u+v)\, S_1(u)\, R(u-v) . \label{RSRS}
}
\end{lemma}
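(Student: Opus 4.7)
My plan is to reduce the reflection equation for $S(u) = T(u-\ka/2)\,\mcG(u)\,T^t(-u+\ka/2)$ to a Sklyanin-type verification built on three pillars: the RTT relation \eqref{RTT}, the reflection equation for $\mcG(u)$ from Lemma \ref{L:RPRP}, and the quantum contraction \eqref{TT}. The first move is to trade the transposed factor for an inverse: setting $\tilde u = u-\ka/2$ and $\hat u = -u-\ka/2$, the identity $T(u)\,T^t(u+\ka) = z(u)\cdot I$ gives $T^t(-\tilde u) = z(\hat u)\,T^{-1}(\hat u)$, so that $S(u) = z(\hat u)\,\tl S(u)$ with $\tl S(u) := T(\tilde u)\,\mcG(u)\,T^{-1}(\hat u)$. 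Since the coefficients of $z(\hat u)$ lie in the centre of $X(\mfg)$, they pass transparently through \eqref{RSRS}, and it suffices to verify the reflection equation for $\tl S(u)$.

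The verification requires three auxiliary commutation relations, all derived from \eqref{RTT}: (i) the inverse RTT $R(u-v)\,T_2^{-1}(v)\,T_1^{-1}(u) = T_1^{-1}(u)\,T_2^{-1}(v)\,R(u-v)$, obtained by inverting \eqref{RTT} or equivalently by applying the antipode $S:T(u)\mapsto T^{-1}(u)$; and the two mixed relations (ii) $T_2^{-1}(v)\,R(u-v)\,T_1(u) = T_1(u)\,R(u-v)\,T_2^{-1}(v)$ and (iii) $T_1^{-1}(u)\,R(v-u)\,T_2(v) = T_2(v)\,R(v-u)\,T_1^{-1}(u)$, each obtained from \eqref{RTT} by multiplying on the left and right by $T_j^{-1}$ and cancelling. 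The crucial matching of spectral arguments is $\tilde u - \tilde v = u-v$, $\tilde v - \hat u = u+v$, $\tilde u - \hat v = u+v$, and $\hat u - \hat v = -(u-v)$; in the last case, $R(-(u-v))$ and $R(u-v)$ differ only by the scalar $(1-(u-v)^{-2})$ from \eqref{R(u)R(-u)}, which cancels when the inverse RTT is invoked at the arguments $\hat u,\hat v$.

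With these tools, the main calculation is a sequence of rewrites. Starting from
\[ R(u-v)\,T_1(\tilde u)\,\mcG_1(u)\,T_1^{-1}(\hat u)\,R(u+v)\,T_2(\tilde v)\,\mcG_2(v)\,T_2^{-1}(\hat v), \]
I would (a) apply (iii) with $(u,v)\mapsto(\hat u,\tilde v)$ to move $T_1^{-1}(\hat u)$ past $R(u+v)\,T_2(\tilde v)$; (b) freely commute $\mcG_1(u)$ past $T_2(\tilde v)$ and $\mcG_2(v)$ past both $T_1(\tilde u)$ and $T_1^{-1}(\hat u)$ (all trivial, since the $\mcG_i$ are scalar in the algebra and act on a single matrix factor); (c) apply \eqref{RTT} to move $R(u-v)$ past $T_1(\tilde u)\,T_2(\tilde v)$; (d) apply Lemma \ref{L:RPRP} to the central block, converting $R(u-v)\,\mcG_1(u)\,R(u+v)\,\mcG_2(v)$ into $\mcG_2(v)\,R(u+v)\,\mcG_1(u)\,R(u-v)$; (e) use the inverse RTT to move $R(u-v)$ past $T_1^{-1}(\hat u)\,T_2^{-1}(\hat v)$; and (f) apply (ii) with $(u,v)\mapsto(\tilde u,\hat v)$ to move $T_1(\tilde u)$ past $R(u+v)\,T_2^{-1}(\hat v)$. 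Regrouping yields exactly $\tl S_2(v)\,R(u+v)\,\tl S_1(u)\,R(u-v)$.

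The real obstacle is conceptual rather than computational: recognising that the quantum contraction \eqref{TT} converts the asymmetric form $T\,\mcG\,T^t$ into the manifestly Sklyanin-type form $T\,\mcG\,T^{-1}$, and that the spectral shifts $\tilde u,\hat u$ are precisely engineered so that the arguments of $R$ in each mixed RTT move land on $u\pm v$ rather than on some shifted value. Once this is in place, every step is a one-line application of a single relation, interleaved with the trivial commutations of the $\mcG_i$ past the $T_j$ factors and the centrality of $z(\hat u)$. An alternative route working directly with $T^t$ via the crossing identity $R^{t_1}(\ka - w) = R(w)$ is feasible in the $T_1^{t_1}$–$T_2$ channel, but produces an unwanted $R(2\ka - u-v)$ in the $T_1$–$T_2^{t_2}$ channel, which is why the passage through $T^{-1}$ is decisively cleaner.
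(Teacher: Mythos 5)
Your proof is correct, but it takes a genuinely different route from the paper's. The paper works directly with the transposed factor $T^t(-u+\ka/2)$: it derives the three mixed exchange relations
$T^t_1(-u+\ka/2)R(u+v)T_2(v-\ka/2)=T_2(v-\ka/2)R(u+v)T^t_1(-u+\ka/2)$, its $P$-conjugate in the $T_1$--$T_2^t$ channel, and $R(u-v)T_1^tT_2^t=T_2^tT_1^tR(u-v)$, all from \eqref{RTT} together with the crossing symmetry $R^t(u)=R(\ka-u)$, and then performs the same six-step dressing computation you describe. You instead invoke the quantum contraction \eqref{TT} to replace $T^t(-u+\ka/2)$ by $z(-u-\ka/2)\,T^{-1}(-u-\ka/2)$, pull out the central scalar, and run the standard non-twisted dressing argument with $T$ and $T^{-1}$; your bookkeeping of the shifted arguments $\tilde u,\hat u$ and the three mixed RTT relations is accurate, and each of steps (a)--(f) checks out. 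What your route buys is that the auxiliary relations are the completely standard ones for reflection algebras of the form $TKT^{-1}$; what it costs is a dependence on \eqref{TT}, which is a structural fact about $X(\mfg)$ beyond the bare RTT relation, whereas the paper's argument uses only \eqref{RTT} and the crossing property of $R$. Your closing remark that the direct $T^t$ route is obstructed by an unwanted $R(2\ka-u-v)$ is not right: that factor only appears if one transposes the second tensor factor of \eqref{RTT} directly, and the paper avoids it by obtaining the $T_1$--$T_2^t$ relation from the $T_1^t$--$T_2$ one by conjugating with $P$ (using $PR(w)P=R(w)$) and swapping $u\leftrightarrow v$, which lands the argument on $u+v$ as required.
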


\begin{proof}
The proof of \eqref{RSRS} follows the standard method, see e.g.\ Section 3 of \cite{MNO}. We will need the following auxiliary relations:
\eqa{ 
T^t_1(-u+\ka/2)\, R(u+v)\, T_2(v-\ka/2) &=  T_2(v-\ka/2)\, R(u+v)\, T^t_1(-u+\ka/2) , \label{L32:1} \\
T_1(u-\ka/2)\, R(u+v)\, T^t_2(-v+\ka/2) &= T^t_2(-v+\ka/2)\, R(u+v)\, T_1(u-\ka/2) , \label{L32:2} \\
R(u-v)\, T^t_1(-u+\ka/2)\, T^t_2(-v+\ka/2) &= T^t_2(-v+\ka/2)\, T^t_1(-u+\ka/2) \, R(u-v) . \label{L32:3} 
}
Let us show why these are true. The first relation is obtained by transposing the first factor of the ternary relation \eqref{RTT} and using symmetry of the $R$-matrix $R^t(u)=R(\ka-u)$,
$$
R(u-v)\, T_1(u)\, T_2(v) = T_1(v)\, T_1(u)\, R(u-v) \Longrightarrow T^t_1(u)\, R(\ka-u+v)\, T_2(v) = T_2(v)\, R(\ka-u+v)\, T^t_1(u).
$$
Then by substituting $u\to -u+\ka/2$ and $v\to v-\ka/2$ we obtain \eqref{L32:1}. The second relation \eqref{L32:2} is obtained from \eqref{L32:1} by conjugating with $P$,
$$
T^t_2(-u+\ka/2)\, R(u+v)\, T_1(v-\ka/2) =  T_1(v-\ka/2)\, R(u+v)\, T^t_2(-u+\ka/2) ,
$$
and interchanging $u\leftrightarrow v$. The last relation \eqref{L32:3} is obtained by transposing the first factor of \eqref{L32:2}, giving 
$$
R(\kappa-u-v)\, T^t_1(u-\ka/2)\, T^t_2(-v+\ka/2) = T^t_2(-v+\ka/2)\, T_1(u-\ka/2) \, R(\kappa-u-v) ,
$$
and substituting $u\to-u+\ka$. Now 
\eqa{
R(u&-v)\, S_1(u)\, R(u+v)\, S_2(v) \el
& = R(u-v)\, T_1(u-\ka/2)\, \mcG_1(u)\, (T^t_1(-u+\ka/2)\, R(u+v)\, T_2(v-\ka/2))\, \mcG_2(u)\, T^t_2(-v+\ka/2) \el
& = (R(u-v)\, T_1(u-\ka/2)\, T_2(v-\ka/2))\, \mcG_1(u)\, R(u+v)\, T^t_1(-u+\ka/2)\, \mcG_2(u)\, T^t_2(-v+\ka/2) \quad\text{by \eqref{L32:1}} \el
& = T_2(v-\ka/2)\, T_1(u-\ka/2)\, (R(u-v)\, \mcG_1(u)\, R(u+v)\, \mcG_2(u))\, T^t_1(-u+\ka/2)\, T^t_2(-v+\ka/2) \quad\text{by \eqref{RTT}} \el
& = T_2(v-\ka/2)\, T_1(u-\ka/2)\, \mcG_2(u)\, R(u+v)\, \mcG_1(u)\, (R(u-v)\, T^t_1(-u+\ka/2)\, T^t_2(-v+\ka/2)) \quad\text{by \eqref{P-RE}} \el 
& = T_2(v-\ka/2)\, \mcG_2(u)\, (T_1(u-\ka/2)\, R(u+v)\, T^t_2(-v+\ka/2))\, \mcG_1(u)\, T^t_1(-u+\ka/2)\, R(u-v) \quad\text{by \eqref{L32:3}} \el 
& = T_2(v-\ka/2)\, \mcG_2(u)\, T^t_2(-v+\ka/2)\, R(u+v)\, T_1(u-\ka/2)\, \mcG_1(u)\, T^t_1(-u+\ka/2)\, R(u-v)  \qquad\!\text{by \eqref{L32:2}} \el
& =  S_2(v)\, R(u+v)\, S_1(u)\, R(u-v) . \nonumber 
}
\end{proof}

\begin{lemma} \label{L:Sym} 
The $S$-matrix $S(u)$ satisfies the symmetry relation
\eq{
S^t(u) = (\pm)\,S(\ka-u) \pm \frac{S(u)-S(\ka-u)}{2u-\ka} + \frac{\Tr(\mcG(u))S(\kappa-u) - \Tr(S(u))\cdot I}{2u-2\ka} \,.  \label{S-Sym}
}
\end{lemma}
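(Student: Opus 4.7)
The plan is to verify the symmetry relation by direct computation, distinguishing according to the kind of $\mcG(u)$ and exploiting the quantum contraction in $X(\mfg)$ together with the rank-one structure of $Q$.

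First, from the definition of $S(u)$ and the transposition rule $(ABC)^t = C^tB^tA^t$, I note that
\[
S^t(u) = T(-u+\ka/2)\,\mcG^t(u)\,T^t(u-\ka/2), \qquad S(\ka-u) = T(-u+\ka/2)\,\mcG(\ka-u)\,T^t(u-\ka/2),
\]
which share the same outer $T$-structure. Therefore $S^t(u) - (\pm)S(\ka-u) = T(-u+\ka/2)\,[\mcG^t(u) - (\pm)\mcG(\ka-u)]\,T^t(u-\ka/2)$, reducing the problem to analyzing the middle factor.

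\textbf{Case 1: $\mcG(u)$ of the first kind (BCD0, CI, DIII, and the $p=q$ subcases of BDI, CII).} Here $\mcG(u) = \mcG$ is constant, $\mcG^t = (\pm)\mcG$ (lower sign specifically for CI and DIII), and $\mcG(\ka-u) = \mcG$, so $S^t(u) = (\pm)S(\ka-u)$ holds directly. It then remains to prove the cancellation identity
\[
\pm\frac{S(u)-S(\ka-u)}{2u-\ka} + \frac{\Tr(\mcG(u))\,S(\ka-u) - \Tr(S(u))\cdot I}{2u-2\ka} = 0.
\]
My plan for this identity is to use Lemma \ref{L:RSRS}: multiplying the reflection equation \eqref{RSRS} by $(u+v-\ka)$ and letting $v\to\ka-u$, the factor $(u+v-\ka)R(u+v)$ tends to $Q$, yielding
\[
R(2u-\ka)\,S_1(u)\,Q\,S_2(\ka-u) \;=\; S_2(\ka-u)\,Q\,S_1(u)\,R(2u-\ka).
\]
Using the rank-one decomposition $Q = v\ot v^*$, the identities $Q\,A_1 = Q\,A_2^t$ and $A_1\,Q = A_2^t\,Q$ valid for any matrix $A$, and their consequences $S_1(u)\,Q = S_2^t(u)\,Q$, $Q\,S_1(u) = Q\,S_2^t(u)$, $Q\,S_1(u)\,Q = \Tr(S(u))\,Q$, $Q\,S_2(\ka-u)\,Q = \Tr(S(\ka-u))\,Q$, the matrix equation is recast as an identity of rank-one tensors. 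Extracting scalar data by applying $v^*$ on the left, together with $v^*(v) = N$, $v^*P = (\pm)v^*$ and $R(2u-\ka)\,Q = Q\cdot\sigma(u)$ for the rational scalar $\sigma(u) = \frac{u(2u-\ka\pm 1)}{(2u-\ka)(u-\ka)}$, produces exactly the required vanishing identity relating $S(u)$, $S(\ka-u)$ and $\Tr(S(u))$.

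\textbf{Case 2: $\mcG(u)$ of the second kind (BDI and CII with $p>q$).} Here $\mcG^t = \mcG$, and a direct computation gives
\[
\mcG(u) - \mcG(\ka-u) = \frac{c(2u-\ka)}{(1-cu)(1-c(\ka-u))}(I-\mcG), \qquad \Tr(\mcG(u)) = \frac{N-4u}{1-cu}.
\]
Consequently $S^t(u) - S(\ka-u)$ is a scalar multiple of $T(-u+\ka/2)(I-\mcG)T^t(u-\ka/2)$. Decomposing $\mcG(u) = \alpha(u)I + \beta(u)\mcG$ with $\alpha(u) = (1-cu)^{-1}$, $\beta(u) = -cu(1-cu)^{-1}$, and applying the Case 1 cancellation identity separately to the $I$- and $\mcG$-components, the right-hand side of the symmetry relation is reproduced term by term.

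\textbf{Main obstacle.} The delicate step is the scalar extraction in Case 1. The residue of \eqref{RSRS} at $v=\ka-u$ is an equality of rank-one tensors with entries in $X(\mfg)$; producing the precise combination $\Tr(\mcG(u))S(\ka-u) - \Tr(S(u))I$ requires careful bookkeeping of the eigenvalue $\sigma(u)$ of $Q$ under $R(2u-\ka)$, the sign $v^*P = (\pm)v^*$, and the appropriate use of the trace identity $Q S_1(u) Q = \Tr(S(u))Q$. Once this scalar identity is established, Case 2 reduces cleanly to Case 1 by linearity in the decomposition of $\mcG(u)$.
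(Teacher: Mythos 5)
Your opening step is where the proof breaks down: for matrices whose entries lie in a noncommutative algebra, the reversal rule $(ABC)^t=C^tB^tA^t$ is not valid. Concretely, $(S^t(u))_{ij}=\theta_{ij}s_{-j,-i}(u)=\sum_{a,b}\theta_{ij}\theta_{b,-i}\,g_{ab}(u)\,t_{-j,a}(u-\ka/2)\,t_{i,-b}(-u+\ka/2)$, whereas the $(i,j)$ entry of $T(-u+\ka/2)\,\mcG^t(u)\,T^t(u-\ka/2)$ carries the two $t$-factors in the opposite order; since these factors do not commute, the two matrices differ. The entire content of the lemma is precisely the set of correction terms produced when one swaps the two $t$-factors using \eqref{[t,t]} --- this is what the paper's proof does --- and by suppressing them you reduce the lemma to two false statements. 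Indeed, in the BCD0 orthogonal case ($\mcG=I$, upper signs) a direct expansion gives, for $i\neq j$,
\[
s^{(2)}_{-j,-i}-s^{(2)}_{ij} \;=\; -\ka\, s^{(1)}_{ij}+\sum_{a=-n}^n\big[t^{(1)}_{ia},t^{(1)}_{-j,-a}\big] \;=\; 2(2-N)F_{ij},
\]
which matches \eqref{S-Sym}, while your Case~1 claim $S^t(u)=S(\ka-u)$ would force $s^{(2)}_{-j,-i}-s^{(2)}_{ij}=-\ka s^{(1)}_{ij}=(2-N)F_{ij}$; these disagree for $N>2$. Consequently the ``cancellation identity'' you then set out to prove is also false: since \eqref{S-Sym} does hold, the sum of its last two terms equals $S^t(u)-(\pm)S(\ka-u)$, which is nonzero. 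The same objection applies to your Case~2, which rests on the same transposition step and on the Case~1 identity.

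The residue-of-the-reflection-equation idea is not hopeless, but it leads to a different argument than the one you sketch: it is the route of Section~\ref{Sec:5}, where one proves $Q\,S_1(u)\,R(2u-\ka)\,S_2^{-1}(\ka-u)=p(u)\,Q$ (Lemma \ref{L:QSRS}, whose proof needs $Q\,T_1(u)=Q\,T_2^t(u)$, the quantum contraction \eqref{TT} and Lemma \ref{L:QPRP}) and then shows this identity is \emph{equivalent} to the symmetry relation (Theorem \ref{T:XRES}). Note the inverse $S_2^{-1}(\ka-u)$ there; the relation $R(2u-\ka)S_1(u)QS_2(\ka-u)=S_2(\ka-u)QS_1(u)R(2u-\ka)$ that you obtain from \eqref{RSRS} does not by itself encode \eqref{S-Sym}. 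To repair the proof along the paper's lines, start from $(S^t(u))_{ij}=\theta_{ij}s_{-j,-i}(u)$, use \eqref{[t,t]} to commute $t_{-j,a}(u-\ka/2)$ past $t_{i,-b}(-u+\ka/2)$, and keep the two correction terms with denominators $2u-\ka$ and $2u-2\ka$; in the second-kind case you will also need the identity $\mcG(u)=\mcG(\ka-u)+\frac{c(2u-\ka)(I-\mcG)}{(1-cu)(1-c(\ka-u))}$ to assemble the right-hand side of \eqref{S-Sym}.
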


\begin{proof}
By \eqref{S=TGT} we have
\eqa{
(S^t(u))_{ij} &= \theta_{ij} s_{-j,-i}(u) = \mysum_{a,b=-n}^n \theta_{ij}\,\theta_{b,-i}\,g_{ab}(u)\, t_{-j,a}(u-\ka/2)\,t_{i,-b}(-u+\ka/2) , \nonumber
}
where $g_{ab}(u)$ denotes the matrix elements of $\mcG(u)$. Using commutation relations \eqref{[t,t]} we find
\eqa{
t_{-j,a}&(u-\ka/2)\,t_{i,-b}(-u+\ka/2) \el
& = t_{i,-b}(-u+\ka/2)\,t_{-j,a}(u-\ka/2) \el
& + \frac{1}{2u-\ka}\big( t_{ia}(u-\ka/2)\,t_{-j,-b}(-u+\ka/2) - t_{ia}(-u+\ka/2)\,t_{-j,-b}(u-\ka/2)\big) \el
& + \frac{1}{2u-2\ka} \mysuml_{c=-n}^n \big(\delta_{ab}\,\theta_{ac}\,t_{i,-c}(-u+\ka/2)\,t_{-j,c}(u-\ka/2) - \delta_{ij}\,\theta_{-j,c}\,t_{ca}(u-\ka/2)\,t_{-c,-b}(-u+\ka/2) \big) . \nonumber
}
Let $\mcG(u)$ be of the first kind. Then $g_{ab}(u)=\delta_{ab}\,g_{aa}$ and $g_{aa}=(\pm)\,g_{-a,-a}$. In this case we find
\eqa{
(S^t(u))_{ij} &= \mysuml_{a=-n}^n \theta_{j,-a}\,g_{aa}\,t_{i,-a}(-u+\ka/2)\,t_{-j,a}(u-\ka/2) \el
& + \frac{1}{2u-\ka} \mysuml_{a=-n}^n \theta_{j,-a}\,g_{aa} \left( t_{ia}(u-\ka/2)\,t_{-j,-a}(-u+\ka/2) - t_{ia}(-u+\ka/2)\,t_{-j,-a}(u-\ka/2)\right) \nn
}
\eqa{
& + \frac{1}{2u-2\ka} \mysuml_{a,b=-n}^n g_{aa}\left(\theta_{j,-b}\,t_{i,-b}(-u+\ka/2)\,t_{-j,b}(u-\ka/2) - \delta_{ij}\,\theta_{ab}\,t_{ba}(u-\ka/2)\,t_{-b,-a}(-u+\ka/2) \right) \el 
& = (\pm)(S(\ka-u))_{ij} \pm \frac{(S(u))_{ij} - (S(\ka-u))_{ij} }{2u-\ka} + \frac{\Tr(\mcG(u))\,(S(\ka-u))_{ij} - \delta_{ij}\,\Tr(S(u)) }{2u-2\ka} , \label{L33:1}
}
where in the second equality we have used the property $\theta_{j,-a}=\pm\theta_{ja}$ and the fact that $\mcG(u)=\mcG$ and $\Tr(\mcG)=0$ for all of the first kind cases except BCD0.

Now let $\mcG(u)$ be of the second kind. We now have $g_{ab}(u)=g_{-b,-a}(u)$ and $\theta_{ab}\,g_{bc}(u)=\theta_{ac}\,g_{bc}(u)$ giving
\eqa{
(S^t(u))_{ij} &= \mysuml_{a,b=-n}^n \theta_{j,-b}\,g_{ab}(u)\,t_{i,-b}(-u+\ka/2)\,t_{-j,a}(u-\ka/2) \el
& + \frac{1}{2u-\ka}\mysuml_{a,b=-n}^n \theta_{j,-b}\,g_{ab}(u) \left( t_{ia}(u-\ka/2)\,t_{-j,-b}(-u+\ka/2) - t_{ia}(-u+\ka/2)\,t_{-j,-b}(u-\ka/2)\right) \el
& + \frac{1}{2u-2\ka} \mysuml_{a,b,c=-n}^n g_{ab}(u) \, (  \delta_{ab}\,\theta_{j,-c}\,t_{i,-c}(-u+\ka/2)\,t_{-j,c}(u-\ka/2) \el[-.25em]
& \hspace{8cm}- \delta_{ij}\,\theta_{bc}\,t_{ca}(u-\ka/2)\,t_{-c,-b}(-u+\ka/2) ) \el
& = \left(T(-u+\ka/2)\,\mcG(u)\,T^t(u-\ka/2)\right)_{ij} \pm \frac{1}{2u-\ka}\left(S(u)-T(-u+\ka/2)\,\mcG(u)\,T^t(u-\ka/2)\right)_{ij} \el
& + \frac{1}{2u-2\ka} \left( \Tr(\mcG(u))\,(T(-u+\ka/2)\,T^t(u-\ka/2))_{ij} - \delta_{ij} \Tr(S(u)) \right) . \label{L33:2}
}
Observe that $\Tr(\mcG(u))=(2\ka\pm2-4u)(1-cu)^{-1}$ and use the relation 
$$
\mcG(u)=\mcG(\ka-u)+\frac{c\,(2u-\ka)(I-\mcG)}{(1-cu)(1-c\,(\ka-u))} \,.
$$
This gives the identity
$$
\left(1\mp\frac{1}{2u-\ka}\right)\mcG(u) + \frac{\Tr(\mcG(u))}{2u-2\ka} = \left(1\mp \frac{1}{2u-\kappa} + \frac{\Tr(\mcG(u))}{2u-2\kappa} \right) \mcG(\kappa-u) \,,
$$
that applied to \eqref{L33:2} gives 
$$
(S^t(u))_{ij} = (S(\ka-u))_{ij} \pm \frac{\left(S(u)-S(\ka-u)\right)_{ij}}{2u-\ka} + \frac{\Tr(\mcG(u))(S(\kappa-u))_{ij} - \delta_{ij}\,\Tr(S(u)) }{2u-2\ka} \,,
$$
which, combined with \eqref{L33:1},  proves the symmetry relation \eqref{S-Sym} in all cases.
\end{proof}

\subsection{Isomorphism between twisted Yangians and reflection algebras}

It follows by Lemmas \ref{L:RSRS} and \ref{L:Sym} that the following formula does indeed define a homomorphism $\phi:  \mcB(\mcG) \to X(\mfg,\mcG)^{tw}$ which is surjective:
\eq{ \label{phi}
\phi \;:\; \mcB(\mcG) \to X(\mfg,\mcG)^{tw}, \quad \msS(u) \mapsto S(u)=T(u-\ka/2)\mcG(u)T^t(-u+\ka/2) ,
}
It will be proved in Theorem \ref{T:BGdecomp} below that $\phi$ is also injective. We will first establish an isomorphism between their quotients by their ideals generated by non-scalar central elements. To achieve this, we start with the next proposition which is suggested by the analogous result (Proposition \ref{sususcalar}) for the extended twisted Yangian and will be needed to identify the quotient of $\mcB(\mcG)$ isomorphic to the twisted Yangian $Y(\mfg,\mcG)^{tw}$.

\begin{prop}\label{ssw}
In the algebra $\mcB(\mcG)$ the product $\msS(u)\,\msS(-u)=\msw(u)\cdot I$ is a scalar matrix, where $\msw(u)$ is an even formal power series in $u^{-1}$ with coefficients $\msw_i$ ($i=2,4,\ldots$) central in $\mcB(\mcG)$.
\end{prop}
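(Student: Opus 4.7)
The plan is to prove the proposition by direct manipulations within $\mcB(\mcG)$, mimicking the proof of Proposition \ref{sususcalar} but without recourse to the operator $T(u)$, which is not available in $\mcB(\mcG)$. The first step is to derive a key intertwining identity for $W(u) := \msS(u)\,\msS(-u)$: multiply the reflection equation \eqref{RE} by $(u+v)$ and let $v \to -u$, using $\lim_{v\to -u}(u+v)R(u+v) = -P$. Applying the identities $P\,\msS_1(u) = \msS_2(u)\,P$, $P\,\msS_2(-u) = \msS_1(-u)\,P$, together with $PR(2u)P = R(2u)$ (which follows from $P^2 = I$ and $PQP = Q$), the resulting equation collapses, after cancelling a factor of $P$ on the right, to
\[
R(2u)\,W_1(u) \;=\; W_2(-u)\,R(2u),
\]
since $\msS_2(-u)\,\msS_2(u) = W_2(-u)$.

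Next, I would argue that this intertwining, together with the initial condition $W(u) = I + O(u^{-1})$ (which uses $\mss_{ij}^{(0)} = g_{ij}$ and $\mcG^2 = I$), forces $W(u) = \msw(u)\,I$ for some series $\msw(u) \in \mcB(\mcG)[[u^{-1}]]$. Expanding $W(u) = I + \sum_{r\ge 1} W^{(r)} u^{-r}$ and $R(2u) = I + \sum_{n\ge 1} R_{[n]} u^{-n}$, induction on $r$ reduces the coefficient of $u^{-r}$ in the intertwining to
\[
W^{(r)} \ot I \;-\; (-1)^r \, I \ot W^{(r)} \;=\; 0,
\]
because the inductive hypothesis (that each $W^{(s)}$ with $s<r$ is either $0$ or a scalar multiple of $I$) makes the lower-order corrections cancel: scalar matrices commute with the $R_{[n]}$, and the contributions pair up with a factor $(-1)^{r-n} - 1$ that vanishes whenever $r-n$ is even. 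Comparing matrix entries of the displayed equation shows that the only solutions are $W^{(r)} = 0$ for odd $r$ and $W^{(r)} = \msw_r \cdot I$ (with $\msw_r \in \mcB(\mcG)$) for even $r$. Hence $W(u) = \msw(u)\,I$ with $\msw(u) = 1 + \sum_{i\ge 1} \msw_{2i}\, u^{-2i}$, which is automatically even in $u^{-1}$.

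Centrality of the coefficients $\msw_{2i}$ follows from a further use of the reflection equation. Starting from the product $\msS_2(v)\,R(u+v)\,\msS_1(u)\,R(u-v)\,R(-u+v)\,\msS_1(-u)\,R(-u-v)$, apply \eqref{RE} at $(u,v)$ to move $\msS_2(v)$ past $\msS_1(u)$, and then \eqref{RE} at $(-u,v)$ to move it past $\msS_1(-u)$. Using $R(u+v)\,R(-u-v) = (1-(u+v)^{-2})\cdot I$ and $R(u-v)\,R(-u+v) = (1-(u-v)^{-2})\cdot I$, the $R$-matrix factors cancel in pairs, producing the relation $\msS_2(v)\,W_1(u) = W_1(u)\,\msS_2(v)$ up to the scalar $(1-(u-v)^{-2})(1-(u+v)^{-2})$. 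Substituting $W_1(u) = \msw(u)\cdot I$ then yields $[\msw(u), \mss_{ij}(v)] = 0$ for all $i,j,v$, so each $\msw_{2i}$ lies in the centre of $\mcB(\mcG)$.

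The main obstacle is the second step: the intertwining $R(2u)\,W_1(u) = W_2(-u)\,R(2u)$ on its own admits non-scalar solutions, and scalarness must be extracted by exploiting both the initial condition $W(u) \to I$ as $u\to\infty$ and the full structure of $R(2u)$ (through both its $P$- and $Q$-contributions). The second-kind case $\mcG(u) = (I - cu\mcG)(1-cu)^{-1}$ is handled by the same argument, since the leading coefficient of $\msS(u)$ is still $\mcG$ with $\mcG^2 = I$; nevertheless, the bookkeeping of the lower-order corrections in the inductive step requires some care.
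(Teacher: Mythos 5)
Your proof is correct, and the first two steps take a genuinely different route from the paper's. The paper works entirely with the componentwise relations \eqref{[s,s]}: it multiplies them by $u^2-v^2$, sets $v=-u$, and then runs through a series of index specializations ($i=j=k$, $i=j=l$, $i=j=-k$, etc.) to show separately that $\msS(u)\msS(-u)$ is diagonal, that its diagonal entries coincide (a step that requires choosing $n\ge 2$ or the special case $i=l=-j=-k$), and that the resulting series is even. Your matrix-level derivation of the intertwining $R(2u)\,W_1(u)=W_2(-u)\,R(2u)$ (via the residue of \eqref{RE} at $u+v=0$, using $P\msS_1(u)=\msS_2(u)P$ and $PR(2u)=R(2u)P$), followed by the coefficient-by-coefficient induction, reaches the same conclusion uniformly: the identity $W^{(r)}\ot I=(-1)^r\, I\ot W^{(r)}$ forces $W^{(r)}=0$ for odd $r$ and $W^{(r)}=\msw_r I$ for even $r$ in one stroke, with the inductive hypothesis ($\msw_s=0$ for odd $s<r$) exactly killing the terms where $(-1)^{r-n}-1\neq 0$. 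This is cleaner and avoids the low-rank caveats of the index casework; the residue-at-a-pole-of-$R$ technique is moreover the same one the paper uses later (e.g.\ in Proposition \ref{P:c(u)}). Your centrality argument coincides with the paper's, which simply defers to Proposition 2.1 of \cite{MoRa}; your sandwiching of $W_1(u)$ between the two instances of \eqref{RE} at $(u,v)$ and $(-u,v)$, using $R(w)R(-w)=(1-w^{-2})I$ and the already-established scalarness of $W_1(u)$, is precisely that argument written out. The only caveat is cosmetic: the "main obstacle" you flag at the end is already resolved by your own induction, since $W_2(-u)$ is not an independent unknown but the substitution $u\mapsto -u$ applied to the same series.
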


\begin{proof}
By multiplying both sides of \eqref{[s,s]} by $(u^2-v^2)$ and setting $v=-u$, we have
\begin{align}
\delta_{ij} \mysum_{a=-n}^n \Big(\mss_{ka}(u)\, \mss_{al}(-u) & -\mss_{ka}(-u)\,\mss_{al}(u)\Big) \notag \\
& = 2 u \mysum_{a=-n}^n \Big( \delta_{jk}\,\mss_{ia}(u)\,\mss_{al}(-u) - \delta_{il}\,\mss_{ka}(-u)\,\mss_{aj}(u) \Big) \notag \\
& + \frac{2u}{2u-\ka}\,\theta_{i,-j} \mysum_{a=-n}^n \Big( \delta_{k,-i}\,\mss_{-j,a}(u)\,\mss_{al}(-u) - \delta_{l,-j}\,\mss_{ka}(-u)\,\mss_{a,-i}(u)\Big) . \label{susu1}
\end{align}
Suppose first that $k\neq \pm l$. Set $i=j=k$ in \eqref{susu1} to conclude that 
$$
\mysum_{a=-n}^n \Big( \mss_{ka}(u)\, \mss_{al}(-u) - \mss_{ka}(-u)\, \mss_{al}(u) \Big) = 2u \mysum_{a=-n}^n \mss_{ka}(u)\, \mss_{al}(-u).
$$
Setting $i=j=l$ in \eqref{susu1} gives  
\eq{
\mysum_{a=-n}^n \Big( \mss_{ka}(u)\, \mss_{al}(-u) - \mss_{ka}(-u)\, \mss_{al}(u) \Big) = -2u \mysum_{a=-n}^n \mss_{ka}(-u)\, \mss_{al}(u),  \label{susu2} 
}
so $\mysum_{a=-n}^n \mss_{ka}(u) \mss_{al}(-u) = - \mysum_{a=-n}^n \mss_{ka}(-u) \mss_{al}(u)$. Now let us set $i=j=-k$ in \eqref{susu1} to obtain
\eq{
\mysum_{a=-n}^n \Big( \mss_{ka}(u)\, \mss_{al}(-u) - \mss_{ka}(-u)\, \mss_{al}(u) \Big) = -\frac{2u}{2u-\kappa} \mysum_{a=-n}^n  \mss_{ka}(u)\, \mss_{al}(-u). \label{susu3}
}
\eqref{susu2} and \eqref{susu3} imply that 
$$
\frac{2u}{2u-\kappa} \mysum_{a=-n}^n  \mss_{ka}(u)\, \mss_{al}(-u)  =2u \mysum_{a=-n}^n\mss_{ka}(u) \,\mss_{al}(-u)
$$
and it follows that $\mysum_{a=-n}^n \mss_{ka}(u)\, \mss_{al}(-u) = 0 = \mysum_{a=-n}^n \mss_{ka}(-u)\, \mss_{al}(u)$.

If $j=k=l$ and $i=-l$ in \eqref{susu1}, then $\mysum_{a=-n}^n \mss_{-l,a}(u)\, \mss_{al}(-u) = 0 $, and if $i=k=l, j=-k$, we obtain that $ \mysum_{a=-n}^n \mss_{k,a}(-u)\, \mss_{a,-k}(u) = 0 $.

We have showed that $\msS(u)\msS(-u)$ and $\msS(-u)\msS(u)$ are diagonal matrices. If $k=l$, then setting $i=j=k$ in \eqref{susu1} shows directly that $\mysum_{a=-n}^n \Big( \mss_{ka}(u)\, \mss_{al}(-u) - \mss_{ka}(-u)\,\mss_{al}(u) \Big) = 0$, so  $\msS(u)\,\msS(-u) = \msS(-u)\,\msS(u)$.

If $n\ge 2$, then we can choose $i,j,k,l$ such that $i=l,j=k$ and $i\neq -j$, in which case we find from  \eqref{susu1} that $\mysum_{a=-n}^n \mss_{ia}(u)\, \mss_{ai}(-u) = \mysum_{a=-n}^n \mss_{ja}(-u)\, \mss_{aj}(u) $. If $i=l = -j=-k$, then we get also $\mysum_{a=-n}^n \Big( \mss_{ia}(u)\, \mss_{ai}(-u) - \mss_{-i,a}(-u)\, \mss_{a,-i}(u) \Big)=0 $. Therefore, the diagonal entries of $\msS(u)\msS(-u)$ are all equal.  

The conclusion so far is that $\msw(u)$ is an even series. Showing that $\msw(u)$ is central in $\mcB(\mcG)$ is exactly as in Proposition 2.1 in \cite{MoRa}.
\end{proof}

\begin{defn}
Let $\mcU\mcB(\mcG)$ be the quotient of the reflection algebra $\mcB(\mcG)$ by the ideal generated by the entries of $\msS(u)\,\msS(-u) - I$. We will call the relation
\eq{ \label{Y:symm}
\msS(u)\,\msS(-u) = I . \vspace{-.75ex}
}
the unitarity constraint.
\end{defn}

\begin{thrm} \label{T:Y(g,G)}
The twisted Yangian $Y(\mfg,\mcG)^{tw}$ is isomorphic to $\mcU\mcB(\mcG)$.
\end{thrm}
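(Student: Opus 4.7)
My plan is to show that $\phi$ descends to a surjective homomorphism $\bar\phi:\mcU\mcB(\mcG)\onto Y(\mfg,\mcG)^{tw}$ and then to prove injectivity by a graded argument. For the first step, note that by Propositions \ref{ssw} and \ref{sususcalar} the scalar matrices $\msw(u)\cdot I$ and $w(u)\cdot I$ equal, respectively, the products $\msS(u)\msS(-u)$ and $S(u)S(-u)$, and $\phi$ clearly sends $\msw(u)$ to $w(u)$. Hence $\phi$ maps the ideal defining $\mcU\mcB(\mcG)$ into the ideal defining $Y(\mfg,\mcG)^{tw}$, so $\bar\phi$ is well-defined and surjective.

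For injectivity I would equip $\mcU\mcB(\mcG)$ with the filtration $\deg\mss^{(r)}_{ij}=r-1$, compatible with the corresponding filtration on $Y(\mfg,\mcG)^{tw}$ whose associated graded algebra is $\mfU\mfg[x]^\rho$ by Theorem \ref{T:Yt=Xt^Y} and Proposition \ref{P:Yt=Ug[x]^q}. The reflection relation \eqref{[s,s]} shows that modulo lower-degree terms the top symbols $\bar{\mss}^{(r)}_{ij}$ satisfy the bracket relations of the current algebra $\mfg[x]$, since the singular contributions from $(u-v-\ka)^{-1}$ and $(u+v-\ka)^{-1}$ drop out in the top graded component, exactly as for $X(\mfg)$ in \cite{AMR}. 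I would then expand the denominators $(2u-\ka)^{-1}$ and $(2u-2\ka)^{-1}$ in \eqref{sym} as formal power series in $u^{-1}$ and extract the top-degree part of the coefficient of each $u^{-r}$; combined with the graded unitarity, which forces $\bar{\msw}_r=0$ for $r\ge 1$, this yields precisely the antisymmetry relation defining $F^{\prime(\rho,r)}_{ij}$ in \eqref{Fprime}. This lets one express every $\bar{\mss}^{(r)}_{ij}$ as a linear combination of the generators listed in Theorem \ref{Y:PBW} and establishes a surjection $\mfU\mfg[x]^\rho\onto\gr\mcU\mcB(\mcG)$. Since its composition with the surjection $\gr\bar\phi:\gr\mcU\mcB(\mcG)\onto\mfU\mfg[x]^\rho$ is the identity on the images of $F^{\prime(\rho,r)}_{ij}$, both $\gr\bar\phi$ and hence $\bar\phi$ are isomorphisms.

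The main obstacle will be the careful bookkeeping in the graded symmetry relation: the two singular denominators must be expanded correctly, the trace contributions must be distinguished between the first-kind case (where $\mcG$ is either traceless or equals $I_N$) and the second-kind case (where an additional $c^{-1}(g_{ij}-\delta_{ij})$ term from \eqref{s_ij(u):2} appears at degree zero), and one must verify that the reduced spanning set so obtained matches exactly the case-by-case list in Theorem \ref{Y:PBW} for each of the symmetric pairs BCD0, CI, DIII, CII, and BDI.
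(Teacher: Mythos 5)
Your overall strategy coincides with the paper's proof of Theorem \ref{T:Y(g,G)}: descend $\phi$ to a surjection $\hat\phi:\mcU\mcB(\mcG)\onto Y(\mfg,\mcG)^{tw}$ (using $\phi(\msw(u))=w(u)$), filter by $\deg\mss^{(r)}_{ij}=r-1$, build a surjection $\mfU\mfg[x]^\rho\onto\gr\,\mcU\mcB(\mcG)$ sending $F^{\prime(\rho,m)}_{ij}\mapsto\bar\mss^{(m)}_{ij}-\delta_{m1}\bar g_{ij}$, and conclude from the fact that its composite with $\gr\,\hat\phi$ is an isomorphism (Proposition \ref{P:Yt=Ug[x]^q}) that both maps, hence $\hat\phi$, are isomorphisms. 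That architecture, and the final ``two surjections composing to an isomorphism'' step, are exactly right; your explicit appeal to the graded unitarity to obtain the full set of linear relations among the $\bar\mss^{(m)}_{ij}$ (beyond the antisymmetry coming from \eqref{sym}) is also a sound instinct, e.g.\ in case BCD0 it is what forces $\bar\mss^{(2r)}_{ij}=0$, matching $F^{\prime(\rho,2r)}_{ij}=0$.

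There is, however, a genuine error in the step where you compute the graded commutator. The contributions of the terms of \eqref{[s,s]} with denominators $(u-v-\ka)^{-1}$ and $(u+v-\ka)^{-1}$ do \emph{not} drop out of the top homogeneous component: since $\mss^{(0)}_{ij}=g_{ij}$ is a scalar of degree $0$ (not $-1$), the summands in which one factor of the product $\mss\,\mss$ is replaced by its constant term $g$ land exactly in degree $m_1+m_2-2$. These survivors are the second and third lines of relation \eqref{[mss,mss]} in the paper, and they are indispensable — they encode the $\theta$-terms of \eqref{[F,F]}, i.e.\ the orthogonal/symplectic structure constants. (The analogy with $X(\mfg)$ in \cite{AMR} in fact points the other way: the $Q/(u-v-\ka)$ term of \eqref{[t,t]} survives in $\gr\,X(\mfg)$ and produces the last two terms of \eqref{[F,F]}.) Only the terms of \eqref{[s,s]} carrying a product of two such denominators fall to lower degree. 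If you literally discard the single-denominator $\ka$-shifted terms, the symbols $\bar\mss^{(m)}_{ij}$ would obey the brackets of a $\mfgl_N$-type twisted current algebra, and the verification that they satisfy the relations of the generators $F^{\prime(\rho,m)}_{ij}$ of $\mfg[x]^\rho$ — the heart of the injectivity argument — would fail. Redoing this bookkeeping correctly yields \eqref{[mss,mss]}, after which your argument closes as in the paper.
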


\begin{proof}
The homomorphism $\phi$ descends to $\hat{\phi}:\mcU\mcB(\mcG) \lra  Y(\mfg,\mcG)^{tw}$ and is surjective. We have to see why it is injective. This will be a consequence of the Poincar\'e-Birkhoff-Witt Theorem for $ Y(\mfg,\mcG)^{tw}$. Let us also denote by $\mss_{ij}^{(m)}$ the images of these generators in the quotient $\mcU\mcB(\mcG)$. We have a filtration on $\mcU\mcB(\mcG)$ obtained by assigning degree $m-1$ to $\mss_{ij}^{(m)}$ and $\hat{\phi}$ becomes a filtered homomorphism. 

Let $\bar\mss^{(m)}_{ij}$ denote the image of the abstract generator $\mss^{(m)}_{ij}$ in the $(m-1)$-th homogeneous component of $\gr\,\mcU\mcB(\mcG)$. The symmetry relation \eqref{RES} leads to the following relation in the  $(m-1)$-th homogeneous component of the graded algebra: \eq{ \label{s=s}
\theta_{ij}\bar{\mss}_{-j,-i}^{(m)} = (\pm) (-1)^m \bar{\mss}_{ij}^{(m)}  + \mathrm{tr}(\mcG) (g_{ij} - \delta_{ij})/2.
}
The defining relation \eqref{[s,s]} implies that the following relation holds in $\gr\,\mcU\mcB(\mcG)$:
\eqa{
[\bar{\mss}_{ij}^{(m_1)},\bar{\mss}_{kl}^{(m_2)}] &= g_{kj}\,\bar\mss_{il}^{(m_1+m_2-1)} - g_{il}\,\bar\mss_{kj}^{(m_1+m_2-1)} - (-1)^{m_1} \mysuml_{a=-n}^n \Big(\delta_{jk}\,g_{ia}\,\bar\mss_{al}^{(m_1+m_2-1)} - \delta_{il}\,g_{aj}\,\bar\mss_{ka}^{(m_1+m_2-1)} \Big) \el
& - \mysuml_{a=-n}^n \Big( \delta_{k,-i} \theta_{ia} \, g_{aj}\,\bar\mss_{-a,l}^{(m_1+m_2-1)} - \delta_{l,-j} \theta_{aj} \,g_{ia}\,\bar\mss_{k,-a}^{(m_1+m_2-1)} \Big) \el
& +  (-1)^{m_1} \Big( \theta_{j,-k} g_{i,-k}\,\bar\mss_{-j,l}^{(m_1+m_2-1)} - \theta_{i,-l} g_{-l,j}\,\bar\mss_{k,-i}^{(m_1+m_2-1)}\Big).  \label{[mss,mss]}
}
It can be checked directly that exactly the same relation holds for the generators $F^{\prime(\rho,m)}_{ij}$ of the Lie algebra $\mfg[x]^\rho$. The equalities \eqref{s=s} and \eqref{[mss,mss]} imply that the elements $\bar{\mss}_{ij}^{(m)}$ satisfy all the defining relations of the generators $F^{\prime(\rho,m)}_{ij}$ of $\mfg[x]^\rho$, so there exists a surjective algebra homomorphism $\psi: \mfU\mfg[x]^\rho \onto \gr\,\mcU\mcB(\mcG)$ given by $F^{\prime(\rho,m)}_{ij} \mapsto \bar{\mss}_{ij}^{(m)} - \delta_{m1} \bar{g}_{ij}$. (The relation \eqref{[mss,mss]} also holds when $m=1$ and $\bar{\mss}_{ij}^{(1)}$ is replaced by $\bar{\mss}_{ij}^{(1)} - \bar{g}_{ij}$.) The composite of this homomorphism with $\gr \, \hat{\phi}: \gr\,\mcU\mcB(\mcG) \onto \gr\,Y(\mfg,\mcG)^{tw}$ is an isomorphism by Proposition \ref{P:Yt=Ug[x]^q}. Therefore, $\psi$ and $\gr \, \hat{\phi}$ must also be isomorphisms, and it follows that $\phi$ is an isomorphism. 
\end{proof}

Since $\phi(\msw(u)) = w(u) = q(u)q(-u)$ and the coefficients of $q(u)$ can be expressed in terms of the coefficients of $w(u)$ (see the proof of Theorem \ref{T:Yt=Xt^Y}), we can write $\msw(u) = \msq(u) \msq(-u)$ with $\phi(\msq(u)) = q(u)$. The central elements $w_{2i}, i=1,2,\ldots$ are algebraically independent by Corollary \ref{C:ZX(g,G)} and $\phi(\msw_{2i}) = w_{2i}$, so the central elements $\msw_{2i}, i=1,2,\ldots$ are also algebraically independent and $\phi$ provides an isomorphism between the subalgebra $\mcW\mcB(\mcG)$ of $\mcB(\mcG)$ generated by the elements $\msw_{2i}$ and the centre $ZX(\mfg,\mcG)^{tw}$ of $X(\mfg,\mcG)^{tw}$ according to Corollary \ref{C:ZX(g,G)}.

 Let $\wt{\mcU\mcB}(\mcG)$ denote the subalgebra of $\mcB(\mcG)$ generated by the coefficients ${\bsi}^{(r)}_{ij}$ of the series $\bsi_{ij}(u)=g_{ij}+\mysum_{r\geq1} \bsi^{(r)}_{ij} u^{-r}$ where $\bsi_{ij}(u)$ is the $(i,j)^{th}$-entry of the matrix $\bSi =  \msq(u)^{-1} \msS(u)$. Observe that $\phi$ maps $\wt{\mcU\mcB}(\mcG)$ to $\wt Y(\mfg,\mcG)^{tw}$. It follows from Proposition \ref{ssw} that $\mysum_{a=-n}^n \bsi_{ia}(u)\,\bsi_{aj}(-u) = \delta_{ij}$.

The next theorem provides the analogue of Theorem \ref{T:Yt=Xt^Y} for reflection algebras.

\begin{thrm}\label{T:BGdecomp}
The extended twisted Yangian $X(\mfg,\mcG)^{tw}$ is isomorphic via $\phi$ to the algebra $\mcB(\mcG)$. The restriction of $\phi$ to $\wt{\mcU\mcB}(\mcG)$ provides an isomorphism between  $\wt{\mcU\mcB}(\mcG)$ and $\wt Y(\mfg,\mcG)^{tw}$ such that $\phi: \bSi(u) \to \Si(u)$, $\bsi_{ij}(u)\mapsto \si_{ij}(u)$. Furthermore, ${\mcB}(\mcG)$ is isomorphic to $\mcW\mcB(\mcG)\ot\wt{\mcU\mcB}(\mcG)$ and the quotient homomorphism $\mcB(\mcG)\twoheadrightarrow{\mcU\mcB}(\mcG)$ induces an isomorphism between $\wt{\mcU\mcB}(\mcG)$ and ${\mcU\mcB}(\mcG)$. 
\end{thrm}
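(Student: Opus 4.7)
The plan is to mirror the proof of Theorem \ref{T:Yt=Xt^Y} on the reflection-algebra side, by constructing a decomposition $\mcB(\mcG)\cong\mcW\mcB(\mcG)\ot\wt{\mcU\mcB}(\mcG)$ analogous to $X(\mfg,\mcG)^{tw}\cong W(\mfg,\mcG)^{tw}\ot\wt Y(\mfg,\mcG)^{tw}$, and then transporting every assertion through $\phi$. First I would check that $\phi(\msw(u))=w(u)$, which is immediate from $\phi(\msS(u))=S(u)$ together with Propositions \ref{sususcalar} and \ref{ssw}. Since the $w_{2i}$ are algebraically independent by Corollary \ref{C:ZX(g,G)}, so are the $\msw_{2i}$, and $\phi$ restricts to an isomorphism $\mcW\mcB(\mcG)\iso W(\mfg,\mcG)^{tw}$. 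The coefficients of $q(u)$ lie in $W(\mfg,\mcG)^{tw}$ (see the proof of Theorem \ref{T:Yt=Xt^Y}), so I can define $\msq(u)\in\mcW\mcB(\mcG)[[u^{-1}]]$ as the unique preimage of $q(u)$; automatically $\msq(u)\msq(-u)=\msw(u)$. Setting $\bSi(u):=\msq(u)^{-1}\msS(u)$, centrality of $\msq$ yields $\bSi(u)\bSi(-u)=I$, while $\phi(\bSi(u))=\Si(u)$, so $\phi$ sends $\wt{\mcU\mcB}(\mcG)$ surjectively onto $\wt Y(\mfg,\mcG)^{tw}$ via $\bsi_{ij}(u)\mapsto\si_{ij}(u)$.

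Since $\msS(u)=\msq(u)\bSi(u)$ writes each $\mss_{ij}^{(r)}$ as a polynomial in central elements of $\mcW\mcB(\mcG)$ and coefficients of $\bsi_{kl}(u)$, the multiplication map $\mu:\mcW\mcB(\mcG)\ot\wt{\mcU\mcB}(\mcG)\to\mcB(\mcG)$ is a well-defined surjective algebra homomorphism, and its composition with $\phi$ followed by the iso of Theorem \ref{T:Yt=Xt^Y} factors as $\phi|_{\mcW\mcB(\mcG)}\ot\phi|_{\wt{\mcU\mcB}(\mcG)}$. The first factor is already an isomorphism, so to conclude that both $\mu$ and $\phi$ are isomorphisms it suffices to establish injectivity of $\phi|_{\wt{\mcU\mcB}(\mcG)}$. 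For this I would pass to the associated graded algebra with respect to the filtration $\deg\mss_{ij}^{(m)}=m-1$. The identity $\msS(u)=\msq(u)\bSi(u)$ yields at graded level the splitting $\bar{\mss}_{ij}^{(m)}=\bar{\bsi}_{ij}^{(m)}+\tfrac{1}{4}(1+(-1)^m)g_{ij}\bar{\msw}_m$, the exact analogue of \eqref{s=si+w}. Combined with the graded relations \eqref{[mss,mss]} coming from \eqref{RE} (exactly as in the proof of Theorem \ref{T:Y(g,G)}) and the algebraic independence of the $\bar{\msw}_{2i}$, this produces a surjection $\mfU\mfg[x]^\rho\ot\C[\xi_2,\xi_4,\ldots]\onto\gr\,\mcB(\mcG)$; composing with $\gr\,\phi$ lands in $\gr\,X(\mfg,\mcG)^{tw}\cong\mfU\mfg[x]^\rho\ot\C[\xi_2,\xi_4,\ldots]$ of Corollary \ref{X:PBW} and is the identity (modulo lower-order terms already accounted for in \eqref{psi:F->si}). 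It follows that $\gr\,\phi$ is an isomorphism, and hence $\phi$ and $\mu$ themselves are isomorphisms.

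The final claim that the quotient $\mcB(\mcG)\onto\mcU\mcB(\mcG)$ induces an isomorphism $\wt{\mcU\mcB}(\mcG)\iso\mcU\mcB(\mcG)$ is now immediate: the quotient kills the ideal generated by the $\msw_{2i}$, which under the tensor decomposition is the augmentation ideal of $\mcW\mcB(\mcG)$ tensored with $\wt{\mcU\mcB}(\mcG)$, leaving $\C\ot\wt{\mcU\mcB}(\mcG)\cong\wt{\mcU\mcB}(\mcG)$. The main obstacle is the graded PBW step: the otherwise-natural attempt to use the commutative square relating $\phi|_{\wt{\mcU\mcB}(\mcG)}$, the quotient $\pi$, the isomorphism $\hat\phi$ of Theorem \ref{T:Y(g,G)}, and the iso of Theorem \ref{T:Yt=Xt^Y} merely reduces injectivity of $\phi|_{\wt{\mcU\mcB}(\mcG)}$ to the condition $\wt{\mcU\mcB}(\mcG)\cap\ker\pi=0$, which is itself equivalent to the tensor-product decomposition one is trying to prove. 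The filtration argument is what breaks this circularity.
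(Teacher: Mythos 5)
Your proposal is correct in outline, but it proves the key injectivity statement by a genuinely different route than the paper. The paper establishes the tensor decomposition $\mcB(\mcG)\cong\mcW\mcB(\mcG)\ot\wt{\mcU\mcB}(\mcG)$ \emph{first}, by an automorphism argument: for any $f(u)$ with $f(u)=f(\ka-u)$ the rescaling $\msS(u)\mapsto f(u)\msS(u)$ is an automorphism of $\mcB(\mcG)$ fixing $\wt{\mcU\mcB}(\mcG)$ and sending $\msq(u)\mapsto f(u)\msq(u)$, so a minimal algebraic relation $\wt P(\msq_2,\ldots,\msq_{2n})=0$ over $\wt{\mcU\mcB}(\mcG)$ would be invariant under $\msq_{2n}\mapsto\msq_{2n}+a$ and hence could not involve $\msq_{2n}$. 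With the decomposition in hand, the quotient $\pi_1$ is injective on $\wt{\mcU\mcB}(\mcG)$, and the commutative square $\pi_2\circ\phi=\hat\phi\circ\pi_1$ together with Theorems \ref{T:Yt=Xt^Y} and \ref{T:Y(g,G)} finishes the argument --- so your closing remark slightly mischaracterizes the paper: the circularity you identify in the bare commutative-square argument is broken there by the automorphism trick, not by a filtration. Your route instead proves injectivity of $\phi$ directly by running the associated-graded/PBW analysis on $\mcB(\mcG)$ itself (rather than only on the quotient $\mcU\mcB(\mcG)$ as in Theorem \ref{T:Y(g,G)}), comparing with Corollary \ref{X:PBW}, and then transporting the decomposition of Theorem \ref{T:Yt=Xt^Y} back through $\phi$. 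This works and in effect also yields the PBW basis of $\mcB(\mcG)$ (the content of Corollary \ref{XX:PBW} restricted to $\mcB(\mcG)$) as a byproduct; the paper's argument is shorter and reuses Theorem \ref{T:Y(g,G)} wholesale.

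One step you should make explicit: the well-definedness of the surjection $\mfU\mfg[x]^\rho\ot\C[\xi_2,\xi_4,\ldots]\onto\gr\,\mcB(\mcG)$ is not a consequence of \eqref{s=s} and \eqref{[mss,mss]} alone. The elements $F^{\prime(\rho,m)}_{ij}$ satisfy linear relations beyond the symmetry $F^{\prime(\rho,m)}_{-j,-i}=(\pm)\theta_{ij}(-1)^mF^{\prime(\rho,m)}_{ij}$ (for fixed $m$ they span only $\mfg^\rho$ or $\check\mfg^\rho$, not all of $\mfg$), and in $\gr\,\mcU\mcB(\mcG)$ the missing relations come from the unitarity constraint. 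In $\gr\,\mcB(\mcG)$ you must instead use the graded form of Proposition \ref{ssw}, namely $\mysum_a\bigl(\bar\mss^{(m)}_{ia}g_{aj}+(-1)^mg_{ia}\bar\mss^{(m)}_{aj}\bigr)=\delta_{ij}\bar\msw_m$, which together with your splitting $\bar\mss^{(m)}_{ij}=\bar\bsi^{(m)}_{ij}+\tfrac14(1+(-1)^m)g_{ij}\bar\msw_m$ and $\mcG^2=I$ gives $\mysum_a\bigl(\bar\bsi^{(m)}_{ia}g_{aj}+(-1)^mg_{ia}\bar\bsi^{(m)}_{aj}\bigr)=0$, i.e.\ exactly the relations satisfied by the $F^{\prime(\rho,m)}_{ij}$. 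With that addition the graded argument closes.
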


\begin{proof}
We start by showing that ${\mcB}(\mcG) \cong \mcW\mcB(\mcG)\ot\wt{\mcU\mcB}(\mcG)$ following the ideas in the proof of Theorem 3.1 in \cite{AMR}. If $f(u)$ is a power series in $u^{-1}$ such that $f(u) = f(\kappa-u)$, then the assignment $\msS(u) \mapsto f(u)\msS(u)$ defines an isomorphism of the reflection algebra ${\mcB}(\mcG)$. This isomorphism sends $\msw(u)$ to $f(u) f(-u) \msw(u)$. Because $\msw(u) = \msq(u) \msq(-u) = \msq(u) \msq(u+\kappa)$, it follows that it sends $\msq(u)$ to $f(u)\msq(u)$ and $\msq(u+\kappa)$ to $f(u+\kappa)\msq(u+\kappa)$, which equals $f(-u)\msq(u+\kappa)$. $\wt{\mcU\mcB}(\mcG)$ is thus invariant under such isomorphisms.

We have that ${\mcB}(\mcG) = \mcW\mcB(\mcG)\cdot \wt{\mcU\mcB}(\mcG)$. We need to show that the elements $\msw_{2i}$ are algebraically independent over $\wt{\mcU\mcB}(\mcG)$. Suppose, on the contrary, that $P(\msw_2, \msw_4, \ldots, \msw_{2n}) = 0$ for some polynomial $P$ in $n$ variables with coefficients in $\wt{\mcU\mcB}(\mcG)$.  Since $\msw(u) = \msq(u) \msq(u+\kappa)$, it is enough to show that the elements $\msq_{2i}$ are algebraically independent over $\wt{\mcU\mcB}(\mcG)$ (notice that $\msq(-u) = \msq(u+\kappa)$ implies that the elements $\msq_j$ with $j$ odd can be expressed in terms of those with $j$ even), so we can consider instead a relation of the form $\wt{P}(\msq_2, \msq_4, \ldots, \msq_{2n}) = 0$ chosen so that $n$ is minimal. Let $f(u)=\left( 1+\frac{(-1)^n a u^{-n}}{(\kappa-u)^n}\right) = \left( 1+\frac{a u^{-2n}}{(1-\kappa u^{-1})^n}\right)$, so $f(u) = f(\kappa-u)$ and multiplication by $f(u)$ on $\msS(u)$ provides an isomorphism of ${\mcB}(\mcG)$.  Since this isomorphism sends $\msq(u)$ to $f(u)\msq(u)$, it follows from $\wt{P}(\msq_2, \msq_4, \ldots, \msq_{2n}) = 0$ that $\wt{P}(\msq_2, \msq_4, \ldots, \msq_{2n}+ a) = 0$ for any $a\in\C$. Therefore, $\wt{P}$ does not depend on its last variable, which contradicts the choice of $n$. Consequently, the elements $\msw_{2i}$ are algebraically independent over $\wt{\mcU\mcB}(\mcG)$ and ${\mcB}(\mcG) \cong \mcW\mcB(\mcG)\ot\wt{\mcU\mcB}(\mcG)$.

Once the isomorphism ${\mcB}(\mcG) \cong \mcW\mcB(\mcG)\ot\wt{\mcU\mcB}(\mcG)$ has been established, it follows that the quotient homomorphism $\pi_1: \mcB(\mcG) \onto \mcU\mcB(\mcG)$ restricts to an isomorphism between $\wt{\mcU\mcB}(\mcG)$ and ${\mcU\mcB}(\mcG)$. Let $\pi_2: X(\mfg,\mcG)^{tw} \onto Y(\mfg,\mcG)^{tw}$ be the quotient homomorphism, so $\pi_2 \circ \phi = \hat{\phi} \circ \pi_1$. Since $\pi_2$ induces an isomorphism between $\wt Y(\mfg,\mcG)^{tw}$ and $Y(\mfg,\mcG)^{tw}$ by Theorem \ref{T:Yt=Xt^Y} and since $\phi$ maps  $\wt{\mcU\mcB}(\mcG)$ to $\wt Y(\mfg,\mcG)^{tw}$, it follows from Theorem \ref{T:Y(g,G)} that $\phi$ restricts to an isomorphism between $\wt{\mcU\mcB}(\mcG)$ and $\wt Y(\mfg,\mcG)^{tw}$. Putting all this information together along with Theorem \ref{T:Yt=Xt^Y} allows us to conclude that $X(\mfg,\mcG)^{tw}$ is isomorphic via $\phi$ to $\mcB(\mcG)$.
\end{proof}

\begin{crl} \label{C:ZB(G)}
$\mcW\mcB(\mcG)$ is the whole centre $\mcZ\mcB(\mcG)$ of ${\mcB}(\mcG)$.
\end{crl}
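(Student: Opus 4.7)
The plan is to deduce this corollary directly from the isomorphism $\phi: \mcB(\mcG) \to X(\mfg,\mcG)^{tw}$ established in Theorem \ref{T:BGdecomp}, together with Corollary \ref{C:ZX(g,G)} which identifies $W(\mfg,\mcG)^{tw}$ as the whole centre of $X(\mfg,\mcG)^{tw}$.

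First, I would observe that any algebra isomorphism carries centre to centre. Applying this to $\phi$, one has $\phi(\mcZ\mcB(\mcG)) = ZX(\mfg,\mcG)^{tw}$. Next, I would recall from the discussion preceding Theorem \ref{T:BGdecomp} that $\phi(\msw(u)) = w(u)$, so $\phi(\msw_{2i}) = w_{2i}$ for all $i\geq 1$, and hence $\phi(\mcW\mcB(\mcG)) = W(\mfg,\mcG)^{tw}$.

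Now by Corollary \ref{C:ZX(g,G)}(\textit{2}), $W(\mfg,\mcG)^{tw} = ZX(\mfg,\mcG)^{tw}$. Combining the two displayed identifications, $\phi(\mcW\mcB(\mcG)) = \phi(\mcZ\mcB(\mcG))$, and since $\phi$ is injective by Theorem \ref{T:BGdecomp}, we conclude that $\mcW\mcB(\mcG) = \mcZ\mcB(\mcG)$.

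There is no real obstacle here: the work was done in proving Theorem \ref{T:BGdecomp} and Corollary \ref{C:ZX(g,G)}. The only point worth checking is that $\phi$ indeed restricts to an isomorphism $\mcW\mcB(\mcG) \iso W(\mfg,\mcG)^{tw}$, which is immediate from Proposition \ref{ssw}, the analogous statement for $X(\mfg,\mcG)^{tw}$, and the fact that $\phi(\msS(u)) = S(u)$. Thus the proof amounts to a one-line invocation of these earlier results.
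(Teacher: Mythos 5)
Your proof is correct and follows essentially the same route as the paper, which simply states that the corollary is an immediate consequence of Theorem \ref{T:BGdecomp} and Corollary \ref{C:ZX(g,G)}; your write-up just makes the two identifications $\phi(\mcZ\mcB(\mcG)) = ZX(\mfg,\mcG)^{tw}$ and $\phi(\mcW\mcB(\mcG)) = W(\mfg,\mcG)^{tw}$ explicit. No issues.
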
 
\begin{proof}
This is an immediate consequence of the previous theorem and Corollary \ref{C:ZX(g,G)}.
\end{proof}

\section{Connection with quantum contraction} \label{Sec:5}

Here we use an alternative approach of investigating the algebraic properties of the reflection algebras and twisted Yangians which was put forward in Section 6 in \cite{MNO}. This approach is based on the use of the one-dimensional projection operator $Q$. We construct certain series whose elements are central in the extended reflection algebra defined by the reflection equation only. The new constructed series are in one-to-one correspondence with the symmetry and unitarity relations.

In the computations below, we will use the following notation. Let  $\{ e_i \}_{i=-n}^n$ denote the standard basis of $\C^N$. We have
$$
P\,(e_i \ot e_j ) = e_j \ot e_i , \qquad Q\,(e_i \ot e_j) = \delta_{-i,j} \mysum_{k} \theta_{jk}\,(e_{-k} \ot e_k) .
$$
These relations can be checked using the definitions \eqref{PQ}. We also set $\xi= \mysum_{k} \theta_{k1}\,(e_{-k} \ot e_k)$ so that $Q\,(\C^N\ot\C^N) = \C\,\xi$ and $Q\,(e_i \ot e_j) = \delta_{-i,j}\,\theta_{j1}\,\xi$.

\subsection{Extended reflection algebra} \label{Sec:51}

In this section, we define an extended reflection algebra $\mcX\mcB(\mcG)$ which depends on the $R$-matrix given by \eqref{R(u)} and the matrix $\mcG$ only. We then construct certain formal power series $\msc(u)$ in $u^{-1}$ with coefficients central in $\mcX\mcB(\mcG)$. This is an analogue of the series $\delta(u)$ constructed in Section 6 in \cite{MNO}. Then we show that the algebra $\mcB(\mcG)$ is isomorphic to the quotient of $\mcX\mcB(\mcG)$ by the ideal $\mcZ\mcX(\mcG)$ generated by the coefficients of the series $\msc(u)$, namely 
$$ \mcB(\mcG) \cong \mcX\mcB(\mcG) / ( \msc(u)-1 ), $$
or in other words, the constrain $\msc(u)=1$ is equivalent to the symmetry relation of $\mcB(\mcG)$. Moreover, we will show that the following tensor product decomposition holds:
$$
\mcX\mcB(\mcG) \cong \mcZ\mcX(\mcG) \ot \mcB(\mcG).
$$

\begin{defn}
The extended reflection algebra $\mcX\mcB(\mcG)$ is the unital associative $\C$-algebra generated by elements $\msx_{ij}^{(r)}$ for $-n\le i,j\le n, r\in\Z_{\ge 0}$ satisfying the reflection equation 
\eq{ \label{XRE}
R(u-v)\,\msX_1(u)\,R(u+v)\,\msX_2(v) = \msX_2(v)\,R(u+v)\,\msX_1(u)\,R(u-v) ,
}
where the $S$-matrix $\msX(u)$ is defined in the usual way: \[ \msX(u) = \mysum_{i,j=-n}^n \mysum_{r=0}^{\infty} E_{ij} \ot \msx_{ij}^{(r)} u^{-r} , \quad \msx_{ij}^{(0)} = g_{ij}. \]
\end{defn}

In what comes next, the following observation will be useful. Let $h(u)$ be a formal power series such that $h(u)\in1+u^{-1}\C[[u^{-1}]]$ and $A\in G$ a matrix such that $A\mcG A^t = \mcG$. The maps
\eq{
\wt\nu_h \;:\; \msX(u) \mapsto h(u)\,\msX(u) , \qquad \wt\gamma \;:\; \msX(u) \mapsto \msX^{-1}(-u) , \qquad \tl\al_A \;:\; \msX(u) \mapsto A \msX(u) A^t
}
are automorphisms of $\mcX\mcB(\mcG)$.

\begin{lemma} \label{L:QPRP} 
The matrix $\mcG(u)$ satisfies the following identity 
\eq{ \label{QPRP:1}
Q\,\mcG_1(u)\,R(2u-\ka)\,\mcG_2^{-1}(\ka-u) = \mcG_2^{-1}(\ka-u)\,R(2u-\ka)\,\mcG_1(u)\,Q = p(u)\,Q ,
}
where
\eq{
p(u)=(\pm)\,1\mp \dfrac{1}{2u-\ka}+\dfrac{\Tr(\mcG(u))}{2u-2\ka} \,. \label{p(u)}
}
\end{lemma}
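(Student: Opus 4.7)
The plan is to expand $R(2u-\ka) = I - \frac{P}{2u-\ka} + \frac{Q}{2u-2\ka}$ and reduce each of the three resulting terms in $Q\mcG_1(u)R(2u-\ka)\mcG_2^{-1}(\ka-u)$ to a scalar multiple of $Q$. The main tools are the commutation rules $P\mcG_1 = \mcG_2 P$ and $QP = PQ = \pm Q$; the ``transpose'' identities $QA_1 = QA_2^t$ and $A_1 Q = A_2^t Q$ for any matrix $A$ (which follow from $Q = P^{t_1} = P^{t_2}$); the rank-one identity $QA_1 Q = \Tr(A) Q$, generalizing to $Q\mcG_1(u) Q = \Tr(\mcG(u)) Q$; the involutivity $\mcG^2 = I$; and the symmetry $\mcG^t = (\pm)\mcG$ in the first-kind case or $\mcG^t = \mcG$ in the second-kind case.

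For the first kind ($\mcG(u) = \mcG$), the three contributions simplify directly: the $I$-term becomes $Q\mcG_1\mcG_2 = Q(\mcG^t\mcG)_2 = (\pm)Q$; the $P$-term becomes $\pm Q$ via $\mcG_1 P\mcG_2 = P\mcG_2^2 = P$; the $Q$-term becomes $\Tr(\mcG)\, Q\mcG_2$, which reduces to $\Tr(\mcG)\, Q$ in every case, either because $\Tr(\mcG)=0$ (cases CI, DIII, BDI and CII with $p=q$) or because $\mcG = I$ forces $Q\mcG_2 = Q$ (case BCD0). Summing the three contributions yields exactly $p(u) Q$.

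For the second kind ($\mcG(u) = (I-cu\mcG)/(1-cu)$ with $c = 4/(p-q)$), one additionally uses $\mcG^{-1}(\ka-u) = \mcG(u-\ka)$, the mutual commutativity of the $\mcG(v)$'s, and the identity
\begin{equation*}
\mcG(u)\,\mcG^{-1}(\ka-u) = \frac{1+c^2u(u-\ka)-c(2u-\ka)\mcG}{(1-cu)(1+c(\ka-u))}.
\end{equation*}
After applying the elementary identities above, each of the three terms produces a combination of $Q$ and $Q\mcG_2$; the coefficient of the unwanted $Q\mcG_2$ must vanish, and that of $Q$ must equal $p(u)$. Using $\Tr(\mcG) = p-q$ and hence $\Tr(\mcG(u)) = (N-4u)/(1-cu)$, together with $\ka = N/2\mp 1$, the $Q\mcG_2$-coefficient collapses to $c(\ka \pm 1 - N/2) = 0$, while the $Q$-coefficient simplifies to $p(u)$ after clearing denominators.

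The second equality $\mcG_2^{-1}(\ka-u)R(2u-\ka)\mcG_1(u)Q = p(u)Q$ is obtained by the mirror argument, using $A_2 Q = A_1^t Q$ in place of $QA_1 = QA_2^t$; alternatively, it can be deduced from the first by applying the total transpose $t_1 t_2$ and using $R^{t_1 t_2}(u) = R(u)$ together with $Q^{t_1 t_2} = Q$. The main obstacle is the bookkeeping in the second-kind case, where the delicate cancellation of the $Q\mcG_2$ contribution hinges on the specific value $c = 4/(p-q)$ together with $\Tr(\mcG) = p-q$ and $\ka = N/2\mp 1$; the entire rationale for the chosen form of $\mcG(u)$ in Definition \ref{D:X(g,G)} is essentially encoded in this single cancellation.
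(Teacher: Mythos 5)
Your proof is correct: I checked the key cancellations (the vanishing of the $Q\,\mcG_2$-coefficient via $\ka=N/2\mp1$ and $c(p-q)=4$, and the collapse of the $Q$-coefficient to $p(u)$) and they go through. The computational core — expanding $R(2u-\ka)=I-\tfrac{P}{2u-\ka}+\tfrac{Q}{2u-2\ka}$ and reducing each term using $P\mcG_1=\mcG_2P$, $QP=\pm Q$, $QA_1=QA_2^t$ and the rank-one property of $Q$ — is the same as the paper's. The one genuine divergence is the first equality in \eqref{QPRP:1}: the paper obtains it for free as the residue at $u+v=\ka$ of the reflection equation \eqref{P-RE} satisfied by $\mcG(u)$ (Lemma \ref{L:RPRP}), which is natural there since that lemma is needed anyway for Lemma \ref{L:RSRS}, whereas you verify both orderings by direct computation (or by applying $t_1t_2$ with $R^{t_1t_2}=R$ and $\mcG(u)^t=(\pm)\mcG(u)$). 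Your route is more self-contained for this lemma, at the cost of duplicating the calculation; the paper's route reuses structure already established. A second, smaller streamlining on your side is invoking the general identity $Q\,A_1\,Q=\Tr(A)\,Q$ uniformly, where the paper instead computes $Q\,\mcG_1\,Q\,\mcG_2$ case by case via \eqref{QGQG}; both are valid, and yours makes it transparent why $\Tr(\mcG(u))$ appears in $p(u)$.
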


\begin{proof}
Recall that the $R$-matrix $R(u)$ has a simple pole at $u=\ka$ with $\underset{u=\ka}{{\rm res}}R(u)=Q$. By multiplying both sides of \eqref{P-RE} with $\mcG^{-1}_2(v)$ we obtain
$$
R(u+v)\,\mcG_1(u)\,R(u-v)\,\mcG^{-1}_2(v) = \mcG^{-1}_2(v)\,R(u-v)\,\mcG_1(u)\,R(u+v) .
$$
Now multiply both sides of the previous equality by $u+v-\kappa$ and then set $v=-u+\kappa$. What remains is the first equality in \eqref{QPRP:1}. To prove the second equality, we need to consider each kind of $\mcG(u)$ individually.
Let $\mcG(u)$ of the first kind. In this case, the left-hand side of \eqref{QPRP:1} becomes
$$
Q\left((\pm)\,1\mp\frac{1}{2u-\ka}\right) + \frac{Q\,\mcG_1\,Q\,\mcG_2}{2u-2\ka} , 
$$
because $Q \mcG_1 P \mcG_2^{-1} = Q P = \pm Q $ and $Q \mcG_1 \mcG_2^{-1} = Q \mcG_1 \mcG_2 = Q \mcG_2^t \mcG_2$. By \eqref{QGQG} and properties of $\mcG$, it follows that 
$$
Q\,\mcG_1\,Q\,\mcG_2 = \begin{cases} N Q = \Tr(\mcG(u))\,Q &\text{for the BCD0 case}, \\ 
0 &\text{for cases CI, DIII, DI and CII when }p=q. \end{cases} 
$$
Now let $\mcG(u)$ of the second kind. By \eqref{QGQG} we have $Q \mcG_1 Q= \mysum_{i=-n}^n g_{ii}\,Q = (p-q)\,Q$. Recall that $c=4/(p-q)$, $\ka=N/2\mp1$ and $\Tr(\mcG(u))=(N-4u)(1-cu)^{-1}$. Then a straightforward (but tedious) calculation gives
\eqa{
Q & \left(\left(1\mp \frac{1}{2u-\ka}\right)\mcG_2(u)\,\mcG_2(u-\ka) + \frac{(N-4u)\,\mcG_2(u-\ka)}{(2u-2\ka)(1-cu)} \right) \el
& = Q \left(\left(1\mp \frac{1}{2u-\ka}\right) \frac{(1 + c^2 u (u-\ka)) I + c (\ka-2 u)\mcG}{(1-cu)(1-c(u-\ka))} + \frac{(N-4u)(I-c(u-\ka)\mcG)}{(2u-2\ka)(1-cu)(1-c(u-\ka))} \right) \el
& = Q\,\left(1 \mp \frac{1}{2u-\ka} + \frac{\Tr(\mcG(u))}{2u-2\ka} \right) . \nonumber
}
By combining the expressions above, we find $p(u)$ as given by \eqref{p(u)}.
\end{proof}

\begin{prop} \label{P:c(u)}
There exists a formal power series
$$ \msc(u) = 1 + \msc_1 u^{-1} + \msc_2 u^{-2} + \ldots \in \mcX\mcB(\mcG)[[u^{-1}]]$$
such that the following identity holds
\eq{ \label{QSRS:1}
Q\,\msX_1(u)\,R(2u-\ka)\,\msX_2^{-1}(\ka-u) = \msX_2^{-1}(\ka-u)\,R(2u-\ka)\,\msX_1(u)\,Q = p(u)\,\msc(u)\,Q .
}
\end{prop}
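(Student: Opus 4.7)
The plan is to extract the identity from the reflection equation \eqref{XRE} by taking a residue at $u+v=\ka$, then to use a rank-one argument to show the resulting matrix is a scalar multiple of $Q$, and finally to normalize by $p(u)$ using Lemma \ref{L:QPRP}.

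First I would multiply both sides of \eqref{XRE} by $(u+v-\ka)$ and evaluate at $v=\ka-u$. Since $R(w)$ has a simple pole at $w=\ka$ with residue $Q$, each factor $(u+v-\ka)R(u+v)$ becomes $Q$, while $R(u-v)$ becomes $R(2u-\ka)$, giving
\[
R(2u-\ka)\,\msX_1(u)\,Q\,\msX_2(\ka-u)=\msX_2(\ka-u)\,Q\,\msX_1(u)\,R(2u-\ka).
\]
Multiplying on the right and then on the left by $\msX_2^{-1}(\ka-u)$ yields
\[
\msX_2^{-1}(\ka-u)\,R(2u-\ka)\,\msX_1(u)\,Q=Q\,\msX_1(u)\,R(2u-\ka)\,\msX_2^{-1}(\ka-u),
\]
which is the first equality in \eqref{QSRS:1}. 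Call this common matrix $M(u)$.

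Second, I would show $M(u)=\msf(u)\,Q$ for some scalar series $\msf(u)\in\mcX\mcB(\mcG)[[u^{-1}]]$. Because $M(u)=Q\cdot(\ldots)$, for every $v\in\C^N\ot\C^N$ one has $M(u)\,v\in\C\,\xi\ot\mcX\mcB(\mcG)[[u^{-1}]]$, so $M(u)\,v=\xi\cdot\lambda(v)$ for a linear map $\lambda$ with values in $\mcX\mcB(\mcG)[[u^{-1}]]$. Because $M(u)=(\ldots)\cdot Q$ and $Q\,v=\xi\cdot\xi^*(v)$, we also have $M(u)\,v=\eta(u)\cdot\xi^*(v)$ for a fixed $\eta(u)\in\C^N\ot\C^N\ot\mcX\mcB(\mcG)[[u^{-1}]]$. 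Equating the two expressions and choosing $v_0$ with $\xi^*(v_0)\neq0$ forces $\eta(u)=\msf(u)\,\xi$ with $\msf(u):=\lambda(v_0)/\xi^*(v_0)$, and then $\lambda(v)=\msf(u)\,\xi^*(v)$ for all $v$ componentwise. Since the components of $\xi$ lie in $\C$ and hence commute with $\msf(u)$, we conclude $M(u)=\msf(u)\,Q$.

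Finally, to identify $\msf(u)=p(u)\,\msc(u)$ with $\msc(u)\in 1+u^{-1}\mcX\mcB(\mcG)[[u^{-1}]]$, I would observe that the $\msX$-independent leading part of $M(u)$ (obtained by replacing $\msX(u)$ by its constant term $\mcG$) equals $Q\,\mcG_1\,R(2u-\ka)\,\mcG_2^{-1}$, which by Lemma \ref{L:QPRP} is $p(u)\,Q$. Since $p(u)$ is a power series in $u^{-1}$ with leading coefficient $(\pm)1$, it is invertible in $\C[[u^{-1}]]$, so $\msc(u):=\msf(u)/p(u)$ is a well-defined element of $1+u^{-1}\mcX\mcB(\mcG)[[u^{-1}]]$, and \eqref{QSRS:1} holds. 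The step I expect to require the most care is the rank-one argument in the middle paragraph: although the conclusion is clean, writing it out rigorously in the tensor product over the noncommutative algebra $\mcX\mcB(\mcG)[[u^{-1}]]$ must exploit that the coordinates of $\xi$ and $\xi^*$ are central $\C$-scalars, and this is what makes the extraction of a single scalar $\msf(u)$ possible.
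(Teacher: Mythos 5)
Your proof follows essentially the same route as the paper's: take the residue of the reflection equation \eqref{XRE} at $u+v=\ka$ (with $\msX_2^{-1}$ inserted), use that $Q/N$ is a rank-one projector to extract a scalar series $\msf(u)$, and normalize by the invertible series $p(u)$; your rank-one argument is just a more explicit version of the paper's one-line appeal to the projector property. One small correction in the last step: for $\mcG$ of the second kind, Lemma \ref{L:QPRP} concerns $Q\,\mcG_1(u)\,R(2u-\ka)\,\mcG_2^{-1}(\ka-u)$ with the $u$-dependent matrices, so $Q\,\mcG_1\,R(2u-\ka)\,\mcG_2^{-1}$ built from the constant matrices is not literally $p(u)\,Q$; however the discrepancy is $O(u^{-1})$, and all that is needed (and all the paper checks) is that the constant terms of $\msf(u)$ and $p(u)$ agree, both equal to $(\pm)1$ via $Q\,\mcG_1\,\mcG_2^{-1}=(\pm)\,Q$, so your conclusion $\msc(u)\in 1+u^{-1}\mcX\mcB(\mcG)[[u^{-1}]]$ stands.
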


\begin{proof}
Multiply both sides of \eqref{XRE} by $\msX_2^{-1}(v)$ and $u+v-\kappa$ and then set $v=\kappa-u$. This gives the first equality in \eqref{QSRS:1}. The second equality follows from the fact that $Q/N$ is a projection operator to a one-dimensional subspace of $(\C^N)^{\ot2}$, thus the right-hand side must be equal to the operator $Q$ times some formal power series $\msc'(u)$ in $u^{-1}$ with coefficients in $\mcX\mcB(\mcG)$. The coefficient of $u^0$ in $\msc'(u)$ must be $(\pm)1$ since the coefficients of $u^0$ in the series $\msX_1(u)$, $\msX_2^{-1}(\ka-u)$ and $R(2u-\ka)$ are equal to $\mcG_1$, $\mcG_2^{-1}$ and $I$, respectively, giving $Q \mcG_1 \mcG_2^{-1}=Q \mcG^t_2 \mcG_2 = (\pm)\,Q$. Now since $p(u)$ given by \eqref{p(u)} is invertible, we can set $\msc(u) = p^{-1}(u)\,\msc'(u)$. This gives \eqref{QSRS:1} as required.
\end{proof}

\begin{rmk}
The identity \eqref{QSRS:1} together with \eqref{p(u)} are the analogues of those obtained in \cite{MNO}. In particular, by setting $\kappa=0$, choosing the plus sign in $(\pm)$ and discarding the last term in \eqref{p(u)} we recover the identity (1) in Proposition 6.2 in \textit{loc.\@ cit}. (see also Remark \ref{R:new-feat} in Section 4 above).
\end{rmk}

\begin{thrm} \label{T:c(u)} 
All the coefficients of the series $\msc(u)$ are central in $\mcX\mcB(\mcG)$. 
\end{thrm}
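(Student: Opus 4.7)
The plan is to establish centrality by proving $[\msc(u),\,\msX_3(v)]=0$, where $\msX_3(v)$ is the defining matrix of $\mcX\mcB(\mcG)$ acting on a third auxiliary space. Since the entries of $\msX_3(v)$ generate $\mcX\mcB(\mcG)$, this is equivalent to the claim. The strategy, mirroring the proof of Theorem~6.5 in \cite{MNO}, is to compute the product
$$
\mathcal{E}(u,v) \;:=\; Q_{12}\,\msX_1(u)\,R_{12}(2u-\ka)\,\msX_2^{-1}(\ka-u)\,\msX_3(v)
$$
in two different ways. Applying Proposition \ref{P:c(u)} on the left of $\msX_3(v)$ and using that $Q_{12}$ commutes with $\msX_3(v)$ (they act on disjoint auxiliary spaces), we obtain immediately $\mathcal{E}(u,v)=p(u)\,Q_{12}\,\msc(u)\,\msX_3(v)$. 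On the other hand, I aim to move $\msX_3(v)$ leftward through $\msX_2^{-1}(\ka-u)$, $R_{12}(2u-\ka)$ and $\msX_1(u)$ and then reapply Proposition \ref{P:c(u)} to obtain $\mathcal{E}(u,v)=\msX_3(v)\,p(u)\,\msc(u)\,Q_{12}=p(u)\,Q_{12}\,\msX_3(v)\,\msc(u)$. Equating these expressions, using the invertibility of the scalar $p(u)$ and the rank-one projector property of $Q_{12}$ (specifically, applying both sides to $e_i\otimes e_j$ in spaces $1,2$ and reading off the coefficient of $\xi$ introduced at the start of Section \ref{Sec:5}) then forces $\msc(u)\,\msX_3(v)=\msX_3(v)\,\msc(u)$.

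The leftward push is carried out using the reflection equation \eqref{XRE} twice. First, with indices $(1,3)$, one has
$$
R_{13}(u-v)\,\msX_1(u)\,R_{13}(u+v)\,\msX_3(v)=\msX_3(v)\,R_{13}(u+v)\,\msX_1(u)\,R_{13}(u-v),
$$
which passes $\msX_3(v)$ through $\msX_1(u)$ at the price of extra $R_{13}(u\pm v)$ factors. Second, one needs an analogous ``inverse reflection equation'' moving $\msX_3(v)$ through $\msX_2^{-1}(\ka-u)$; this is derived by taking \eqref{XRE} in indices $(2,3)$ with $u\mapsto\ka-u$, left- and right-multiplying by $\msX_2^{-1}(\ka-u)$, and using the crossing property $R(x)R(-x)=(1-x^{-2})I$ to convert the relevant $R_{23}$ factors to their inverses. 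The outcome is a relation that introduces $R_{23}(\ka-u\pm v)^{\pm 1}$ factors around $\msX_2^{-1}(\ka-u)$.

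After inserting both rearrangements into $\mathcal{E}(u,v)$, the resulting chain of $R_{13}$, $R_{12}(2u-\ka)$ and $R_{23}$ factors is reorganised via repeated application of the Yang--Baxter equation \eqref{YBE} so that pairs of $R_{13}$ and $R_{23}$ factors whose arguments sum to $\ka$ become adjacent. The decisive input is a ``fusion/collapse'' identity of the form
$$
Q_{12}\,R_{13}(x)\,R_{23}(\ka-x) \;=\; f(x)\,Q_{12} \;=\; R_{13}(x)\,R_{23}(\ka-x)\,Q_{12}
$$
for a suitable scalar $f(x)$, which can be verified directly from the explicit formula \eqref{R(u)} and the action of $Q_{12}$ on $(\C^N)^{\ot 3}$ recorded at the start of Section \ref{Sec:5}. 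Applied twice (with $x=u-v$ and $x=u+v$), it collapses all spurious $R$-factors against $Q_{12}$, and the two sides of the desired commutation identity match. The main technical obstacle will be the careful algebraic bookkeeping of these $R$-matrix chains and the precise verification of the fusion identity, which is more elaborate than in the $\mfgl$-type setting of \cite{MNO} because of the extra $Q$-term present in the $R$-matrix \eqref{R(u)}; the verification requires systematic use of the relations $PQ=QP=\pm Q$, $Q^2=NQ$, $R^{t_1t_2}(u)=R(u)$ and the explicit action of $P$ and $Q$ on basis vectors described at the start of Section \ref{Sec:5}.
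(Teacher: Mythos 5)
Your overall strategy coincides with the paper's (which in turn follows Theorem 6.3 of \cite{MNO}): adjoin a third auxiliary space carrying the generating matrix, push it through the combination $Q_{12}\,\msX_1(u)\,R(2u-\ka)\,\msX_2^{-1}(\ka-u)$ by means of the reflection equation and the Yang--Baxter equation, and then collapse the residual $R$-factors against the rank-one projector $Q_{12}$. The paper organizes the computation slightly differently -- it first establishes the three-space identity \eqref{T31:9} for generic spectral parameters and only afterwards takes the residue at $u_1+u_2=\ka$, rather than specializing first and then transporting the extra $\msX$-matrix -- but this is a presentational difference, not a mathematical one.

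There is, however, an error in the ``fusion/collapse'' identity on which your argument hinges. From $Q_{12}A_1=Q_{12}A_2^t$ and $R^{t}(x)=R(\ka-x)$ one gets $Q_{12}R_{13}(x)=Q_{12}R_{23}(\ka-x)$, hence
\[
Q_{12}\,R_{13}(x)\,R_{23}(\ka-x)\;=\;Q_{12}\,R_{23}(\ka-x)^2,
\]
which is \emph{not} a scalar multiple of $Q_{12}$, since the square of $R(y)$ is not scalar. The identity that actually holds, and that the computation requires, involves arguments differing by $\ka$ rather than summing to $\ka$:
\[
Q_{12}\,R_{13}(x)\,R_{23}(x-\ka)\;=\;\bigl(1-(x-\ka)^{-2}\bigr)\,Q_{12}\;=\;R_{23}(x-\ka)\,R_{13}(x)\,Q_{12},
\]
where one should also note the reversal of the order of the two non-commuting factors $R_{13}$, $R_{23}$ when $Q_{12}$ sits on the right; these are precisely the paper's identities \eqref{T31:10}--\eqref{T31:11} after relabelling the extra space. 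The factors produced by your own rearrangement are in fact of the correct form once the inverses $R_{23}(\ka-u\pm v)^{-1}\propto R_{23}(u\mp v-\ka)$ coming from the $(2,3)$ reflection equation are written out explicitly, so the defect is repairable; but with the fusion identity as you stated it the collapse step would fail, and the two evaluations of $\mathcal{E}(u,v)$ would not match.
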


\noindent The proof of this theorem is similar to the one for Theorem 6.3 in \cite{MNO}.

\begin{proof}
Proving that $\msc(u)$ is central takes several steps. Consider the tensor space $(\End\,\C^N)^{\ot3}$. Enumerate the copies of $\End\,\C^N$ by $0,1,2$ and set 
$$
R_{ij}=R_{ij}(u_i-u_j), \quad R'_{ij}=R_{ij}(u_i+u_j), \quad \msX_i=\msX_i(u_i),  \quad\text{with}\quad 0 \le i < j \le 2.
$$
We need to prove the identity 
\eq{
\msX_{0}\,\msc(u_1)\,Q_{12} = \msc(u_1)\,Q_{12}\,\msX_{0} , \label{T31:1}
}
which is equivalent to the statement that $\msc(u)$ is central. 
First, we need some auxiliary identities. Consider the following Yang-Baxter identities:
\eqa{ 
R_{12} R_{02} R_{01} & = R_{01} R_{02} R_{12}, \label{T31:3} \\
R'_{12} R'_{01} R_{02} &= R_{02} R'_{01} R'_{12} , \label{T31:4} \\
R'_{12} R'_{02} R_{01} &= R_{01} R'_{02} R'_{12} , \label{T31:5} \\
R_{12} R'_{01} R'_{02} &= R'_{02} R'_{01} R_{12} . \label{T31:6}
}
Here \eqref{T31:3} is the Yang-Baxter equation \eqref{YBE} written in the new notation. The remaining identities follow by transposing appropriate factors of the tensor space $(\End\,\C^N)^{\ot3}$ and using the property $R^t(u)=R(\ka-u)$. For example, to obtain \eqref{T31:4}, we need to transpose \eqref{T31:3} with $t_0$ and substitute $u_0\mapsto\ka-u_0$,\, $u_2\mapsto-u_2$. The remaining identities can be obtained in a similar same way. The reflection equation in the new notation reads as
\eq{ \label{T31:7}
R_{12} \msX_1 R'_{12} \msX_2 = \msX_2 R'_{12} \msX_1 R_{12}.
}
By multiplying both sides with $\msX^{-1}_2$ we get
\eq{ \label{T31:8}
R'_{12} \msX_1 R_{12} \msX^{-1}_2 = \msX^{-1}_2 R_{12} \msX_1 R'_{12} .
}
These auxiliary identities are needed to prove the following relation:
\eq{ \label{T31:9}
R_{01} R'_{02} \msX_0 R_{02} R'_{01} \msX^{-1}_2 R_{12} \msX_1 R'_{12} = \msX^{-1}_2 R_{12} \msX_1 R'_{12} R'_{01} R_{02} \msX_0 R'_{02} R_{01} .
}
Indeed,
\eqa{
R_{01} (R'_{02} \msX_0 R_{02} \msX^{-1}_2 ) R'_{01}  R_{12} \msX_1 R'_{12}
& = R_{01} \msX^{-1}_2  R_{02} \msX_0 (R'_{02} R'_{01} R_{12}) \msX_1 R'_{12} \qquad\text{by \eqref{T31:8}} \el
&= R_{01} \msX^{-1}_2  R_{02} \msX_0 R_{12} R'_{01} R'_{02} \msX_1 R'_{12} \phantom{()} \qquad\text{by \eqref{T31:6}} \el
&= \msX^{-1}_2 (R_{01} R_{02} R_{12}) \msX_0 R'_{01} R'_{02} \msX_1 R'_{12} \el 
&= \msX^{-1}_2 R_{12} R_{02} ( R_{01} \msX_0 R'_{01} \msX_1) R'_{02} R'_{12} \qquad\text{by \eqref{T31:3}} \el
&= \msX^{-1}_2 R_{12} R_{02} \msX_1 R'_{01} \msX_0 (R_{01} R'_{02} R'_{12}) \qquad\text{by \eqref{T31:7}} \el
&= \msX^{-1}_2 R_{12} \msX_1 (R_{02} R'_{01} R'_{12}) \msX_0 R'_{02} R_{01} \qquad\text{by \eqref{T31:5}} \el
&= \msX^{-1}_2 R_{12} \msX_1 R'_{12} R'_{01} R_{02} \msX_0 R'_{02} R_{01} \phantom{()} \qquad\text{by \eqref{T31:4}} , \nonumber
}
thus proving \eqref{T31:9}. Now multiply both sides of (5.14) by $u_1+u_2+\kappa$ and set $u_2 = \kappa-u_1$. By Proposition \ref{P:c(u)} we obtain
\eq{
R_{01} R'_{02} \msX_0 R_{02} R'_{01} Q_{12}\,\msc(u_1) = \msc(u_1)\,Q_{12} R'_{01} R_{02} \msX_0 R'_{02} R_{01} \label{T31:1A} .
}
We will use the following identities to simplify \eqref{T31:1A}:
\eqa{
 Q_{12} R'_{01} R_{02} &= R_{02} R'_{01} Q_{12} = (1 - (u_0+u_1-\ka)^{-2}) \, Q_{12}\, , \label{T31:10}\\
 Q_{12} R'_{02} R_{01} &= R_{01} R'_{02} Q_{12} = (1 - (u_0-u_1)^{-2}) \, Q_{12}\, , \label{T31:11}
}
which follow from \eqref{T31:6} and \eqref{T31:3}, respectively, after replacing $u_2$ by  $-u_2$ and taking the residue at $u_1+u_2=\kappa$. Let us explicitly show how to obtain \eqref{T31:10}. Since $Q/N$ is a one-dimensional projector it is sufficient to consider the action of $Q_{12} R'_{01} R_{02}$ on the basis vector $\eta = e_i\ot e_{-1} \ot e_1\in(\C^N)^{\ot3}$, since $Q_{12}\,\eta = e_i \ot \xi$. Define $u_{ij}=u_i-u_j$ and $v_{ij}=u_i+u_j$. This gives
\eqn{
& Q_{12} R'_{01} R_{02} (e_i\ot e_{-1} \ot e_1) \\
& \qquad = Q_{12} R'_{01} \left( e_i\ot e_{-1} \ot e_1 - \frac{1}{u_{02}}\,e_1\ot e_{-1} \ot e_i + \frac{\delta_{i,-1}}{u_{02}-\kappa}\mysum_{j}\theta_{1j}\,e_{-j}\ot e_{-1} \ot e_j \right) ,
}
which evaluates to
\eqa{
&  Q_{12} ( e_i\ot e_{-1} \ot e_1 - \frac{1}{u_{02}}\,e_1\ot e_{-1} \ot e_i + \frac{\delta_{i,-1}}{u_{02}-\kappa}\mysum_{j}\theta_{1j}\,e_{-j}\ot e_{-1} \ot e_j \el
& \qquad -\dfrac{1}{v_{01}}\, e_{-1}\ot e_i \ot e_{1} + \dfrac{1}{v_{01}u_{02}}\,e_{-1}\ot e_1 \ot e_{i} - \dfrac{\delta_{i,-1}}{v_{01}(u_{02}-\kappa)}\mysum_{j}\theta_{1j}\, e_{-1}\ot e_{-j} \ot e_j \el
& \qquad \pm \frac{\delta_{i1}}{v_{01}-\kappa}\mysum_{j}\theta_{1j}\,e_{-j} \ot e_j \ot e_{1}
 \mp \dfrac{1}{u_{02}(v_{01}-\kappa)}\mysum_{j}\theta_{1j} \,e_{-j}\ot e_j \ot e_i \el
& \hspace{3in}  
 + \dfrac{\delta_{i,-1}}{(v_{01}-\kappa)(u_{02}-\kappa)}\mysum_{j}\theta_{1j}\,e_{-j}\ot e_{j} \ot e_{-1} ) \nn\\
&  = \mysum_j\theta_{1j}\, e_i\ot e_{-j} \ot e_j - \dfrac{\delta_{i1}}{u_{02}}\mysum_j \theta_{1j}\,e_1\ot e_{-j} \ot e_j + \dfrac{\delta_{i,-1}}{u_{02}-\kappa}\mysum_{j}\theta_{1j}\,e_{-1}\ot e_{-j} \ot e_j \el
& \qquad - \frac{\delta_{i,-1}}{v_{01}}\mysum_j \theta_{1j}\, e_{-1}\ot e_{-j} \ot e_{j} \pm \dfrac{\delta_{i,-1}}{v_{01}u_{02}}\mysum_j \theta_{1j}\,e_{-1}\ot e_{-j} \ot e_{j} - \dfrac{N\delta_{i,-1}}{v_{01}(u_{02}-\kappa)}\mysum_{j}\theta_{1j}\, e_{-1}\ot e_{-j} \ot e_j \el
& \qquad + \dfrac{\delta_{i1}}{v_{01}-\kappa}\mysum_{j}\theta_{1j}\,e_{1} \ot e_{-j} \ot e_{j}
 - \dfrac{1}{u_{02}(v_{01}-\kappa)}\mysum_{j}\theta_{1j} \,e_{i}\ot e_{-j} \ot e_j \el
& \hspace{3in}  
 \pm \frac{\delta_{i,-1}}{(v_{01}-\kappa)(u_{02}-\kappa)}{\mysum_{j}}\theta_{1j}\,e_{-1}\ot e_{-j} \ot e_{j} .
} \notag
After substituting $u_2\to\ka-u_1$ most of the terms cancel each other. What remains is
$$
(1 - (u_0+u_1-\ka)^{-2}) \mysum_j\theta_{1j}\, e_i\ot e_{-j} \ot e_j = (1 - (u_0+u_1-\ka)^{-2}) ( e_i\ot \xi ) ,
$$
which implies \eqref{T31:10}. A similar calculation for $Q_{12} R'_{02} R_{01}$ implies \eqref{T31:11}. These two relations applied to \eqref{T31:1A} give \eqref{T31:1}. This proves the theorem.
\end{proof}

\begin{crl} \label{C:52}
The odd coefficients $\msc_1, \msc_3, \ldots$ of the series $\msc(u)$ are algebraically independent.
\end{crl}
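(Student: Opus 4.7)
The plan is to exploit the automorphism $\wt\nu_h$ of $\mcX\mcB(\mcG)$ introduced in Section \ref{Sec:51} and track how it rescales $\msc(u)$. Applying $\wt\nu_h$ to the defining identity \eqref{QSRS:1}, the factor $R(2u-\ka)$ stays fixed while $\msX_1(u)\mapsto h(u)\msX_1(u)$ and $\msX_2^{-1}(\ka-u)\mapsto h(\ka-u)^{-1}\msX_2^{-1}(\ka-u)$, giving
\[
\wt\nu_h(\msc(u))=\frac{h(u)}{h(\ka-u)}\,\msc(u).
\]
So the whole task reduces to choosing $h$ so that this prefactor shifts a single targeted odd coefficient of $\msc(u)$ while leaving lower-index ones untouched.

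Suppose for contradiction that there is a polynomial relation $P(\msc_1,\msc_3,\ldots,\msc_{2n-1})=0$, and pick one with $n$ minimal. Take $h(u)=1+b\,u^{-(2n-1)}$ for $b\in\C$. Expanding $(\ka-u)^{-(2n-1)}=-u^{-(2n-1)}(1-\ka u^{-1})^{-(2n-1)}$ and multiplying out,
\[
\frac{h(u)}{h(\ka-u)}=1+2b\,u^{-(2n-1)}+O(u^{-2n}),
\]
so under $\wt\nu_h$ one has $\msc_j\mapsto \msc_j$ for $j<2n-1$ and $\msc_{2n-1}\mapsto \msc_{2n-1}+2b$. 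Applying $\wt\nu_h$ to the supposed relation and using that $\wt\nu_h$ is an algebra automorphism yields $P(\msc_1,\msc_3,\ldots,\msc_{2n-3},\msc_{2n-1}+2b)=0$ for every $b\in\C$, which forces $P$ to be independent of its last variable and contradicts minimality of $n$.

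The only two places requiring genuine care are: first, verifying that $h(u)\in 1+u^{-1}\C[[u^{-1}]]$ with a single odd-degree leading correction really does produce a purely odd-degree shift in $\msc(u)$ (a short binomial expansion, as sketched above), and second, confirming that the prescription $\msX(u)\mapsto h(u)\msX(u)$ indeed defines an automorphism of $\mcX\mcB(\mcG)$, which was already recorded just after the definition of $\mcX\mcB(\mcG)$ (since the reflection equation \eqref{XRE} is scalar-homogeneous in $\msX$ once both sides are multiplied by $h(u)h(v)$). The main obstacle, minor but worth spelling out, is ensuring that the higher-order tail $O(u^{-2n})$ in $h(u)/h(\ka-u)$ does not contaminate the argument: it only affects $\msc_j$ with $j\ge 2n$, hence is invisible to a polynomial in $\msc_1,\ldots,\msc_{2n-1}$, so the minimality argument goes through unchanged.
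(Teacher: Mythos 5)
Your argument is correct, but it follows a genuinely different route from the paper's. The paper constructs an evaluation homomorphism $\beta_f$ sending $\msX(u)\mapsto f(u)\,\mcG(u)$ with $f(u)=1+\mysum_{r\ge1}x_ru^{-r}$ into the polynomial ring $\C[x_1,x_2,\ldots]$ (this is legitimate because $f(u)\mcG(u)$ solves the reflection equation by Lemma \ref{L:RPRP}); applying $\beta_f$ to \eqref{QSRS:1} and invoking Lemma \ref{L:QPRP} gives $\beta_f(\msc(u))=f(u)f(\ka-u)^{-1}$, so $\beta_f(\msc_{2r+1})=2x_{2r+1}+(\text{polynomial in }x_1,\ldots,x_{2r})$ and independence is read off in the polynomial ring. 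You instead stay inside $\mcX\mcB(\mcG)$ and use the automorphisms $\wt\nu_h$ together with a minimal-counterexample argument; this is precisely the technique the paper itself uses for the elements $\msw_{2i}$ in the proof of Theorem \ref{T:BGdecomp} (and which goes back to Theorem 3.1 of [AMR]). Both proofs pivot on the same computation, namely that the multiplier $h(u)h(\ka-u)^{-1}$ equals $1+2b\,u^{-(2n-1)}+O(u^{-2n})$ when $h(u)=1+bu^{-(2n-1)}$ --- the factor $2$ coming from the odd exponent is what makes the odd coefficients free while the even ones are constrained. Two small points worth noting: your use of $\wt\nu_h$ with $h(u)\ne h(\ka-u)$ is permissible exactly because you work in $\mcX\mcB(\mcG)$ rather than $\mcB(\mcG)$ (no symmetry relation to preserve, cf.\ Proposition \ref{P:S-inv}); and the step ``$P(\ldots,\msc_{2n-1}+2b)=0$ for all $b$ forces $P$ to be independent of its last variable'' should strictly be phrased via the leading coefficient of $P$ in its last variable, which is a polynomial in $\msc_1,\ldots,\msc_{2n-3}$ vanishing in the algebra and hence zero by minimality of $n$ --- but this is the same level of detail as the paper's own proof of Theorem \ref{T:BGdecomp}. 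The paper's proof is slightly more economical in that it settles all $r$ at once without induction on a minimal relation; yours avoids introducing the auxiliary homomorphism $\beta_f$.
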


\begin{proof}
Consider the polynomial ring $\C[x_1,x_2,\ldots]$ in infinitely many variables and set $f(u) = 1 + \mysum_{r=1}^{\infty} x_r u^{-r}$. We have that $f(u) \mcG(u)$ is a solution of the reflection equation by Lemma \ref{L:RPRP}. It follows that the assignment $\msX(u) \mapsto f(u)\mcG(u)$ defines an algebra homomorphism $\beta_f:\End(\C^N) \otimes \mcX\mcB(\mcG)[[u^{-1}]]  \lra \End(\C^N) \otimes \C[x_1,x_2,\ldots][[u^{-1}]]$. Applying $\beta_f$ to both sides of \eqref{QSRS:1}, we obtain that $f(u) f(\kappa - u)^{-1} = \beta_f(\msc(u))$ by Lemma \ref{L:QPRP}. Therefore, $\beta_f(\msc_{2r+1}) = 2 x_{2r+1} + g_{2r}$ where $g_{2r}$ is a polynomial in the variables $x_1,\ldots, x_{2r}$. Since the variables $x_i, \, i\ge 1$ are algebraically independent, so are $\beta_f(\msc_{2r+1}) \; \forall \, r\ge 0$, and the same must be true for the central elements $\msc_{2r+1}$ for all $r\ge 0$.
\end{proof}

\begin{lemma} \label{L:QSRS} 
The $S$-matrix $S(u)$ given in Definition \ref{D:X(g,G)} satisfies the symmetry relation 
\eq{ \label{QSRS:1A}
Q\,S_1(u)\,R(2u-\ka)\,S_2^{-1}(\ka-u) = S_2^{-1}(\ka-u)\,R(2u-\ka)\,S_1(u)\,Q = p(u)\,Q .
} where $p(u)$ is the power series given in \eqref{p(u)}.
\end{lemma}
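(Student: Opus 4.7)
The plan is to compute the left-hand side directly using the explicit form $S(u)=T(u-\ka/2)\,\mcG(u)\,T^t(-u+\ka/2)$, reducing it via the $RTT$-relation to the expression in Lemma \ref{L:QPRP}. The key observation is that one can strip away the factors involving $T$ entirely by using $Q$-absorption identities.

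First, I would establish the auxiliary identity $Q\,T_1^t(v)=Q\,T_2(v)$ (and symmetrically $T_1^t(v)\,Q=T_2(v)\,Q$). This complements the known identities $Q\,T_1(v)=Q\,T_2^t(v)$ and $T_1(v)\,Q=T_2^t(v)\,Q$, and follows by a direct index computation using $Q=\sum_{i,j}\theta_{ij}\,E_{ij}\ot E_{-i,-j}$ and $(E_{kl})^t=\theta_{kl}E_{-l,-k}$, together with the multiplicative property $\theta_{ij}\theta_{jk}=\theta_{ik}$. As immediate consequences, I get the absorption identities
\[
Q\,T_1(v)\,T_2^{-t}(v) = Q\,T_2^t(v)\,T_2^{-t}(v) = Q, \qquad Q\,T_1^t(v)\,T_2^{-1}(v) = Q\,T_2(v)\,T_2^{-1}(v) = Q.
\]

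Next, I would derive the transport identity
\[
T_1^t(-u+\ka/2)\,R(2u-\ka)\,T_2^{-t}(u-\ka/2) = T_2^{-t}(u-\ka/2)\,R(2u-\ka)\,T_1^t(-u+\ka/2),
\]
by specializing \eqref{L32:3} at $v=\ka-u$ and multiplying on both sides by $T_2^{-t}(u-\ka/2)$. Using this together with the fact that $\mcG_1(u)$ commutes with $T_2^{-t}(u-\ka/2)$ and $\mcG_2^{-1}(\ka-u)$ commutes with $T_1^t(-u+\ka/2)$ (different tensor factors), the main expression rearranges to
\begin{align*}
Q\,S_1(u)\,R(2u-\ka)\,S_2^{-1}(\ka-u)
&= Q\,T_1(u-\ka/2)\,T_2^{-t}(u-\ka/2)\,\mcG_1(u)\,R(2u-\ka)\,\mcG_2^{-1}(\ka-u) \\
& \qquad \times T_1^t(-u+\ka/2)\,T_2^{-1}(\ka/2-u).
\end{align*}
The leftmost $Q\,T_1\,T_2^{-t}$ collapses to $Q$ by the first absorption identity, and then Lemma \ref{L:QPRP} produces the factor $p(u)$, leaving $p(u)\,Q\,T_1^t(-u+\ka/2)\,T_2^{-1}(\ka/2-u)$. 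Since $-u+\ka/2 = \ka/2-u$, the second absorption identity collapses this to $p(u)\,Q$, as desired.

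The second equality $S_2^{-1}(\ka-u)\,R(2u-\ka)\,S_1(u)\,Q = p(u)\,Q$ follows by the completely analogous computation, using the mirror-image transport identity $T_2^{-1}(\ka/2-u)\,R(2u-\ka)\,T_1(u-\ka/2) = T_1(u-\ka/2)\,R(2u-\ka)\,T_2^{-1}(\ka/2-u)$ (obtained from \eqref{RTT} with $u\to u-\ka/2$, $v\to \ka/2-u$) together with $T_2^{-t}(v)\,T_1(v)\,Q=Q$ and $T_1^t(v)\,Q=T_2(v)\,Q$. The main (minor) obstacle is verifying the auxiliary identity $Q\,T_1^t(v)=Q\,T_2(v)$ correctly, as the sign conventions arising from $\theta_{i,-j}=\pm\theta_{ij}$ must conspire to cancel; once this is in hand, the remainder is a direct chain of commutations reducing to Lemma \ref{L:QPRP}.
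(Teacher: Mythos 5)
Your proposal is correct and follows essentially the same route as the paper: both reduce the expression to Lemma \ref{L:QPRP} by means of a transport identity derived from \eqref{L32:3} (your identity is exactly the paper's auxiliary relation \eqref{L51:1}) together with the $Q$-absorption identities $Q\,T_1(v)=Q\,T_2^t(v)$ and $Q\,T_1^t(v)=Q\,T_2(v)$. The only cosmetic difference is that the paper obtains the first equality of \eqref{QSRS:1A} from the reflection equation for $S(u)$ as in Proposition \ref{P:c(u)}, whereas you compute both sides directly; both are valid.
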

\begin{proof}
The proof of the first equality is analogous to the one in the proof of the Proposition \ref{P:c(u)} above. Proving the second equality requires the following auxiliary relation:
\eqa{ 
T^t_1(-u+\ka/2)\,R(2u-\ka)\,T^t_2(u-\ka/2)^{-1} &= T^t_2(u-\ka/2)^{-1}\,R(2u-\ka)\,T^t_1(-u+\ka/2) , \label{L51:1}  
}
which is is obtained by multiplying both sides of \eqref{L32:3} with $T^t_2(-v+\ka/2)^{-1}$ and substituting $v\mapsto-u+\ka$. Now recall that $Q\,T_1(u)=Q\,T^t_2(u)$ and $\mcG^{-1}(u)=\mcG(-u)$. We have
\eqn{
Q\,&S_1(u)\,R(2u-\ka)\,S_2^{-1}(\ka-u) \\
&=Q\,T_1(u-\ka/2)\,\mcG_1(u)\,(T^t_1(-u+\ka/2)\,R(2u-\ka)\,T^t_2(u-\ka/2))^{-1}\,\mcG_2(u-\ka)\,T^{-1}_2(-u+\ka/2) \\
& = (Q\,T_1(u-\ka/2)\,T^t_2(u-\ka/2))^{-1}\,\mcG_1(u)\,R(2u-\ka)\,T^t_1(-u+\ka/2)\,\mcG_2(u-\ka)\,T^{-1}_2(-u+\ka/2) \quad\text{by \eqref{L51:1}} \\
& = Q\,\mcG_1(u)\,R(2u-\ka)\,\mcG_2(u-\ka)\,T^t_1(-u+\ka/2)\,T^{-1}_2(-u+\ka/2) \\
& = p(u)\,Q\,\,T^t_1(-u+\ka/2)\,T^{-1}_2(-u+\ka/2) = p(u)\,Q  \hspace{6.6cm}\text{by } \eqref{QPRP:1}. 
}
\end{proof}

We have the following equivalence:

\begin{thrm}\label{T:XRES}
The relation $\msc(u)=1$ is equivalent to the symmetry relation
\eq{
\msX^t(u) = (\pm)\,\msX(\ka-u) \pm \frac{\msX(u)-\msX(\ka-u)}{2u-\ka} + \frac{\Tr(\mcG(u))\,\msX(\kappa-u) - \Tr(\msX(u))\cdot I}{2u-2\ka} . \label{XRES}
}
\end{thrm}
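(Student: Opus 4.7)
The plan is to unpack the defining equation of $\msc(u)$ from Proposition \ref{P:c(u)},
\[ Q\, \msX_1(u)\, R(2u-\ka)\, \msX_2^{-1}(\ka-u) = p(u)\, \msc(u)\, Q, \]
by expanding $R(2u-\ka) = I - P/(2u-\ka) + Q/(2u-2\ka)$ and pushing every operator past the leftmost $Q$ so that all remaining $\msX$-factors end up in the second tensor factor of $\End(\C^N)^{\otimes 2}$. The identities needed are the standard $\msX_1(u)\, P = P\, \msX_2(u)$ and $QP = \pm Q$, the transpose rule $Q\, \msX_1(u) = Q\, \msX_2^t(u)$ (already used in the proof of Lemma \ref{L:Sym}), and the rank-one projector identity $Q\, A_1\, Q = \Tr(A)\, Q$, which is immediate from the fact that $Q/N$ is the projector onto $\C\xi$ and is verified by evaluating both sides on the vector $\xi$.

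Applying these term by term, one obtains
\[ Q\, \msX_1(u)\, R(2u-\ka)\, \msX_2^{-1}(\ka-u) = Q\, M(u)_2, \]
where $M(u)$ is the matrix over $\mcX\mcB(\mcG)[[u^{-1}]]$ given by
\[ M(u) := \msX^t(u)\, \msX^{-1}(\ka-u) \mp \frac{\msX(u)\, \msX^{-1}(\ka-u)}{2u-\ka} + \frac{\Tr(\msX(u))\, \msX^{-1}(\ka-u)}{2u-2\ka}. \]

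The next key observation is that the map $M \mapsto Q\, M_2$ is injective: writing $Q = \mysum_{i,j} \theta_{ij} E_{ij} \otimes E_{-i,-j}$ and using linear independence of $\{E_{ij}\}$, the identity $Q\, M_2 = f \cdot Q$ with $f$ a central power series forces $E_{-i,-j}\, M = f\, E_{-i,-j}$ for every $i,j$, hence $M = f\, I$. Since $\msc(u)$ is central by Theorem \ref{T:c(u)}, it follows that $Q\, M(u)_2 = p(u)\msc(u)\, Q$ is equivalent to the matrix identity $M(u) = p(u) \msc(u)\, I$. Since $\msX(\ka-u)$ is invertible as a formal power series (its constant term $\mcG$ is invertible), right-multiplication by $\msX(\ka-u)$ and substitution of the explicit formula \eqref{p(u)} for $p(u)$ turn the condition $\msc(u) = 1$ into
\[ \msX^t(u) \mp \frac{\msX(u)}{2u-\ka} + \frac{\Tr(\msX(u))}{2u-2\ka}\, I = (\pm)\msX(\ka-u) \mp \frac{\msX(\ka-u)}{2u-\ka} + \frac{\Tr(\mcG(u))\, \msX(\ka-u)}{2u-2\ka}, \]
which rearranges directly into \eqref{XRES}. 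The converse follows by reversing each step.

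The argument is essentially structural rather than computational; the only potential pitfall is the bookkeeping of the $\pm/\mp/(\pm)$ sign conventions across the orthogonal, symplectic, CI and DIII cases, but these cancel uniformly once the global sign conventions fixed in the definitions of $R(u)$ and $\mcG(u)$ are invoked.
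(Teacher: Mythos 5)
Your proof is correct and follows essentially the same route as the paper: both unpack the defining identity $Q\,\msX_1(u)\,R(2u-\ka)\,\msX_2^{-1}(\ka-u)=p(u)\,\msc(u)\,Q$ into the matrix identity $p(u)\,\msc(u)\,\msX(\ka-u)=\msX^t(u)\mp\tfrac{\msX(u)}{2u-\ka}+\tfrac{\Tr(\msX(u))}{2u-2\ka}I$ and then invoke invertibility of $p(u)$ and $\msX(\ka-u)$. The only difference is presentational: you work at the operator level with the identities $Q A_1=QA_2^t$, $QP=\pm Q$, $QA_1Q=\Tr(A)Q$, whereas the paper evaluates both sides on the basis vectors $e_{-i}\ot e_j$; your injectivity observation for $M\mapsto QM_2$ correctly replaces the paper's entry-wise comparison.
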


\begin{proof}
Denote the matrix elements of $\msX^{-1}(u)$ by $\msx'_{ij}(u)$, $-n\leq i,j\leq n$, and apply the left-hand side of \eqref{QSRS:1} to the vector $e_{-i}\ot e_{j}\in (\C^N)^{\ot2}$. This gives
\eqa{
Q&\,\msX_1(u)\,R(2u-\ka)\,\msX_2^{-1}(\ka-u) \, (e_{-i}\ot e_{j}) = Q\,\msX_1(u)\,R(2u-\ka) \mysum_{k=-n}^n \msx'_{kj}(\ka-u)(e_{-i}\ot e_{k}) \el
& = Q\,\msX_1(u) \mysuml_{k=-n}^n \left( \msx'_{kj}(\ka-u) \left( e_{-i}\ot e_k - \dfrac{1}{2u-\ka} \,e_{k}\ot e_{-i}\right) + \dfrac{\theta_{ik}\,\msx'_{ij}(\ka-u)}{2u-2\ka} \, e_{-k}\ot e_k \right) \el
& = Q \mysuml_{k,l} \left( \left(\msx_{l,-i}(u)\,\msx'_{kj}(\ka-u)\, e_{l}\ot e_k - \dfrac{\msx_{lk}(u)\,\msx'_{kj}(\ka-u)}{2u-\ka}\, e_{l}\ot e_{-i}\right) + \dfrac{\theta_{ik}\,\msx_{l,-k}(u)\,\msx'_{ij}(\ka-u)}{2u-2\ka}\, e_{l}\ot e_k \right) \el
& = \mysuml_{k} \left( \theta_{k1}\,\msx_{-k,-i}(u)\,\msx'_{kj}(\ka-u) - \dfrac{\theta_{-i,1}\,\msx_{ik}(u)\,\msx'_{kj}(\ka-u)}{2u-\ka} + \dfrac{\theta_{i1}\,\msx_{-k,-k}(u)\,\msx'_{ij}(\ka-u)}{2u-2\ka} \right) \xi\, .  \label{pcQ1}
}
For the right-hand side of \eqref{QSRS:1} we have 
\eqa{
p(u)\,\msc(u)\, Q\,(e_{-i}\ot e_j) = p(u)\,\msc(u)\,\delta_{ij}\,\theta_{i1}\,\xi . \label{pcQ2}
}
Recall that $\delta_{ij} = \mysum_k \msx_{ik}(\ka-u)\,\msx'_{kj}(\ka-u)$ and set $\msx'_{ij}(\ka-u)=\mysum_k \delta_{ik}\,\msx'_{kj}(\ka-u)$. Then by comparing the equalities \eqref{pcQ1} and \eqref{pcQ2} above we find
\eq{
p(u)\,\msc(u)\,\msx_{ik}(\ka-u) = \theta_{ki}\,\msx_{-k,-i}(u) \mp \frac{\msx_{ik}(u)}{2u-\ka} + \frac{\delta_{ik}\mysum_l\msx_{ll}(u)}{2u-2\ka} . \label{T52:1}
}
By setting $\msc(u)=1$ and using \eqref{p(u)}, the explicit form of $p(u)$, we obtain the relation
$$
\theta_{ki}\,\msx_{-k,-i}(u) = (\pm)\,\msx_{ik}(\ka-u) \mp \dfrac{\msx_{ik}(\ka-u)-\msx_{ik}(u)}{2u-\ka}+\dfrac{\Tr(\mcG(u))\,\msx_{ik}(\ka-u) - \delta_{ik} \mysum_l\msx_{ll}(u)}{2u-2\ka}
$$
which is equivalent to a matrix element of the symmetry relation \eqref{XRES}. On the other hand, if \eqref{XRES} is satisfied, then \eqref{T52:1} for $i=k=-1$ becomes
$$
p(u)\,\msc(u)\,\msx_{-1,-1}(\ka-u) = \msx_{11}(u) \mp \frac{\msx_{-1,-1}(u)}{2u-\ka} + \frac{\mysum_l\msx_{ll}(u)}{2u-2\ka} = p(u)\,\msx_{-1,-1}(\ka-u) ,
$$
where we have used \eqref{XRES} to obtain the second equality. Since $\msx_{-1,-1}(\ka-u)$ and $p(u)$ are invertible power series, it follows that $\msc(u)=1$. 
\end{proof}

\begin{crl}\label{C:XB->B}
The reflection algebra $\mcB(\mcG)$ is isomorphic the quotient of $\mcX\mcB(\mcG)$ by the ideal generated by the coefficients of the series $\msc(u)$:
\eq{
\mcB(\mcG) \cong \mcX\mcB(\mcG) / (\msc(u) - 1) . \nn
}
\end{crl}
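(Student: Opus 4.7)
The plan is to construct mutually inverse algebra homomorphisms between $\mcB(\mcG)$ and the quotient $\mcX\mcB(\mcG)/(\msc(u)-1)$, with generators matched by $\mss_{ij}^{(r)} \leftrightarrow \overline{\msx_{ij}^{(r)}}$. The corollary is essentially a repackaging of Theorem~\ref{T:XRES}, so no genuinely new computation is required.

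First, I would verify that the assignment $\msx_{ij}^{(r)} \mapsto \mss_{ij}^{(r)}$ defines a surjective algebra homomorphism $\pi: \mcX\mcB(\mcG) \to \mcB(\mcG)$. This is immediate since the reflection equation \eqref{XRE} is formally identical to \eqref{RE}, and the initial condition $\msx_{ij}^{(0)}=g_{ij}=\mss_{ij}^{(0)}$ matches. Under $\pi$ the matrix $\msX(u)$ maps to $\msS(u)$. Because the construction in Proposition \ref{P:c(u)} depends only on the reflection equation and on Lemma \ref{L:QPRP}, the same recipe applied to $\msS(u)$ in $\mcB(\mcG)$ produces a central series $\msc^{\msS}(u)$ with $\pi(\msc(u)) = \msc^{\msS}(u)$. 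By Theorem \ref{T:XRES}, the relation $\msc^{\msS}(u)=1$ is equivalent to the symmetry relation \eqref{RES}, and this symmetry holds in $\mcB(\mcG)$ by definition. Hence $\pi(\msc(u)-1)=0$, and $\pi$ descends to a surjective homomorphism $\bar{\pi}: \mcX\mcB(\mcG)/(\msc(u)-1) \to \mcB(\mcG)$.

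For the inverse, I would define $\iota: \mcB(\mcG) \to \mcX\mcB(\mcG)/(\msc(u)-1)$ by $\mss_{ij}^{(r)} \mapsto \overline{\msx_{ij}^{(r)}}$. The reflection equation is automatically inherited from \eqref{XRE}, so only the symmetry relation \eqref{RES} needs to be checked on the images. But in the quotient $\msc(u)=1$ by construction, and Theorem \ref{T:XRES} (applied in the other direction) then forces the images $\overline{\msx_{ij}^{(r)}}$ to satisfy \eqref{XRES}, which is exactly \eqref{RES}. Thus $\iota$ is well-defined. Since $\bar{\pi}\circ\iota$ and $\iota\circ\bar{\pi}$ are the identity on the respective generating sets, they are the identity everywhere, and both maps are isomorphisms.

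The main — and only — subtle point is ensuring that Theorem \ref{T:XRES} genuinely provides a two-way equivalence that can be transported between $\mcX\mcB(\mcG)$ and $\mcB(\mcG)$. This is guaranteed by the structure of that theorem's proof: the derivation of \eqref{T52:1} relies solely on the defining reflection equation and on Lemma \ref{L:QSRS}-style manipulations of $Q$, while the specialization at $i=k=-1$ that extracts $\msc(u)=1$ from \eqref{RES} is a single scalar identity using the invertibility of $p(u)$ and $\msx_{-1,-1}(\ka-u)$. Both ingredients are available in either algebra, so the equivalence lifts and descends cleanly, completing the argument.
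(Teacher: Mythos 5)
Your proposal is correct and is essentially the paper's (implicit) argument: the corollary is stated without proof as an immediate consequence of Theorem \ref{T:XRES}, and your two mutually inverse homomorphisms, with the forward direction of that theorem giving well-definedness of $\iota$ and the converse direction giving $\pi(\msc(u))=1$, is exactly the intended reading. The only remark worth making is that the key identity \eqref{T52:1} holds in any algebra whose $S$-matrix satisfies the reflection equation with invertible leading term, which is precisely why the equivalence transports to $\mcB(\mcG)$ as you note.
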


\begin{prop} \label{P:S-inv}
The algebra $\mcB(\mcG)$ is invariant under the automorphism $\wt\nu_h$ for any series $h(u)$ satisfying $h(u)\,h^{-1}(\ka-u)=1$.
\end{prop}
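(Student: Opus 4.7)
The plan is to invoke Corollary \ref{C:XB->B}, which identifies $\mcB(\mcG)$ with the quotient of the extended reflection algebra $\mcX\mcB(\mcG)$ by the ideal $I_{\msc}$ generated by the coefficients of $\msc(u)-1$. Since $\wt\nu_h$ was already observed to be a well-defined automorphism of $\mcX\mcB(\mcG)$ for any invertible series $h(u)$, it suffices to show that $\wt\nu_h(I_{\msc}) \subset I_{\msc}$ when $h(u)\,h^{-1}(\ka-u)=1$; equivalently, that $\wt\nu_h$ fixes $\msc(u)$ under this hypothesis. Then $\wt\nu_h$ descends to an endomorphism of $\mcB(\mcG)$, and the same argument applied to $\wt\nu_{h^{-1}}$ (which satisfies the same symmetry condition) produces a two-sided inverse, yielding an automorphism.

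The key computation is to track how $\msc(u)$ transforms. Applying $\wt\nu_h$ to both sides of the defining relation \eqref{QSRS:1},
\[
Q\,\msX_1(u)\,R(2u-\ka)\,\msX_2^{-1}(\ka-u) = p(u)\,\msc(u)\,Q,
\]
the left-hand side scales by $h(u)\,h^{-1}(\ka-u)$: the first factor $\msX_1(u)$ is multiplied by $h(u)$, and $\msX_2^{-1}(\ka-u)$ is multiplied by $h^{-1}(\ka-u)$, while the scalar $R$-matrix factor is untouched. The factor $p(u)\,Q$ on the right-hand side is a fixed element of $\End(\C^N)^{\ot 2}$ not involving the generators of $\mcX\mcB(\mcG)$, so we conclude
\[
\wt\nu_h(\msc(u)) \;=\; h(u)\,h^{-1}(\ka-u)\,\msc(u).
\]
The hypothesis $h(u)\,h^{-1}(\ka-u)=1$ then gives $\wt\nu_h(\msc(u))=\msc(u)$, and thus $\wt\nu_h$ preserves $I_{\msc}$.

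There is no real obstacle here: the condition $h(u) = h(\ka-u)$ is precisely what is needed to compensate for the shift $u \mapsto \ka-u$ built into the symmetry-defining series $\msc(u)$. Alternatively, one can verify the claim directly at the level of the symmetry relation \eqref{RES}: under $\wt\nu_h$ each term on the right-hand side involves either $\msS(u)$ or $\msS(\ka-u)$, acquiring the common prefactor $h(u)=h(\ka-u)$, which matches the prefactor $h(u)$ picked up by $\msS^t(u)$ on the left. The computation via $\msc(u)$ is cleaner because it isolates the relevant cocycle $h(u)\,h^{-1}(\ka-u)$ in one line rather than balancing several terms.
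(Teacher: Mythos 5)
Your proposal is correct and follows essentially the same route as the paper: apply $\wt\nu_h$ to the identity \eqref{QSRS:1} to obtain $\wt\nu_h(\msc(u)) = h(u)\,h^{-1}(\ka-u)\,\msc(u)$, so the hypothesis $h(u)=h(\ka-u)$ makes the ideal $(\msc(u)-1)$ stable and the automorphism descends to the quotient $\mcB(\mcG)\cong\mcX\mcB(\mcG)/(\msc(u)-1)$. The extra remarks about $\wt\nu_{h^{-1}}$ providing an inverse and the alternative check via the symmetry relation are fine but not needed.
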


\begin{proof}
By \eqref{QSRS:1} the image of $\msc(u)$ under the automorphism $\wt\nu_h$ is $h(u)\,h^{-1}(\ka-u)\,\msc(u)$. If $h(u) = h(\kappa-u)$, then $\wt{\nu}_h(\msc(u) - 1) = \msc(u) -1$, so the ideal generated by the coefficients of $\msc(u)$ is stable under $\wt{\nu}_h$ and this automorphism descends to the quotient $\mcX\mcB(\mcG)/(\msc(u)-1)$.
\end{proof}

From \eqref{QSRS:1}, we quickly obtain 
\[ 
\msc(u)^{-1} Q = p(u)\, Q\, \msX_2(\kappa-u) \, R^{-1}(2u-\kappa) \msX_1^{-1}(u)  
\] 
which, after conjugating by $P$, gives 
\[ 
\msc^{-1}(u)\, Q = p(u)\, Q\, \msX_1(\kappa-u)\, \frac{R(\kappa-2u)}{1-(2u-\kappa)^{-2}}\, \msX_2^{-1}(u) = \frac{p(u)\,p(\kappa-u)}{1-(2u-\kappa)^{-2}}\,\msc(\kappa-u)\, Q.  
\] 
It can be checked that $p(u)\,p(\kappa-u) = 1 - (2u-\kappa)^{-2}$, so it follows that $\msc(\kappa-u) = \msc^{-1}(u)$.

Let $\msv(u)$ be the unique invertible power series with constant term $1$ such that $\msc(u)=\msv^2(u)$. Then $\msv^{-2}(u) = \msc(\kappa-u) = \msv^2(\kappa-u)$, hence $\msv^{-1}(u) = \msv(\kappa-u)$ because both have constant term 1. Thus it follows that $\msc(u) = \msv(u)\,\msv^{-1}(\ka-u)$. 

For any power series $h(u)$, we have $\wt\nu_h(\msc(u))=h(u)\,h^{-1}(\ka-u) \,\msc(u)$ (see \eqref{QSRS:1}). If $h(u)$ satisfies $h^{-1}(u) = h(\kappa-u)$ with constant term 1, then we deduce that $\wt\nu_h(\msc(u))=h^2(u) \,\msc(u)$ and $\tl\nu_h(\msv(u))=h(u)\,\msv(u)$.

\begin{thrm} \label{T:XS=ZX*S}
Let $\wt{\mcB}(\mcG)$ be the subalgebra of $\mcX\mcB(\mcG)$ generated by the coefficients of the series $\tl\mss_{ij}(u)=\msv^{-1}(u)\,\msx_{ij}(u)$. $\mcX\mcB(\mcG)$ is isomorphic to $\mcZ\mcX(\mcG) \ot \mcB(\mcG)$. Moreover, the quotient homomorphism $\mcX\mcB(\mcG)\onto \mcB(\mcG)$ induces an isomorphism between $\wt{\mcB}(\mcG)$ and $\mcB(\mcG)$. 
\end{thrm}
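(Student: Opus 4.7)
The plan is to follow the pattern of Theorems \ref{T:Yt=Xt^Y} and \ref{T:BGdecomp}. First I would verify that $\tl\msS(u) = \msv^{-1}(u)\,\msX(u)$ satisfies the defining relations of $\mcB(\mcG)$, yielding a homomorphism $\pi:\mcB(\mcG)\lra\wt{\mcB}(\mcG)\subset\mcX\mcB(\mcG)$, $\msS(u)\mapsto\tl\msS(u)$. Because $\msv(u)$ is scalar with central coefficients, the reflection equation \eqref{XRE} for $\msX(u)$ immediately yields the one for $\tl\msS(u)$. For the symmetry relation, I would appeal to Theorem \ref{T:XRES}: using \eqref{QSRS:1} and the identities $\msv(\ka-u)=\msv^{-1}(u)$, $\msc(u)=\msv^2(u)$ proved just before the theorem, one computes
\[
Q\,\tl\msS_1(u)\,R(2u-\ka)\,\tl\msS_2^{-1}(\ka-u) = \msv^{-1}(u)\,\msv(\ka-u)\cdot p(u)\,\msc(u)\,Q = p(u)\,Q,
\]
so Theorem \ref{T:XRES} (with the role of $\msc(u)$ played by the constant $1$) delivers \eqref{RES} for $\tl\msS(u)$.

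Next I would establish the tensor product decomposition. The factorization $\msx_{ij}(u)=\msv(u)\,\tl\mss_{ij}(u)$ shows $\mcX\mcB(\mcG)=\mcZ\mcX(\mcG)\cdot\wt{\mcB}(\mcG)$, so I only need to prove algebraic independence of the generators of $\mcZ\mcX(\mcG)$ over $\wt{\mcB}(\mcG)$. Because $\msc(\ka-u)=\msc^{-1}(u)$ expresses every even coefficient of $\msc(u)$ as a polynomial in the odd ones, it suffices to show that $\msc_1,\msc_3,\ldots$ are algebraically independent over $\wt{\mcB}(\mcG)$. Paralleling the argument in Theorem \ref{T:BGdecomp}, I would exploit the automorphisms $\wt\nu_h$ of $\mcX\mcB(\mcG)$. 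The paragraph preceding the theorem shows that, for a scalar series $h(u)$ with constant term $1$ satisfying $h(u)h(\ka-u)=1$, the map $\wt\nu_h$ sends $\msv(u)$ to $h(u)\,\msv(u)$, which cancels the prefactor in $\tl\mss_{ij}(u)$, hence fixes $\wt{\mcB}(\mcG)$ pointwise, while $\msc(u)$ is sent to $h^2(u)\,\msc(u)$. Assuming a hypothetical minimal relation $\wt P(\msc_1,\msc_3,\ldots,\msc_{2n-1})=0$ with coefficients in $\wt{\mcB}(\mcG)$, I would pick $h(u)$ of the form $1+a\,u^{-(2n-1)}(\ka-u)^{-(2n-1)}+\cdots$, tuned so that $h(u)h(\ka-u)=1$ and so that $h^2(u)\,\msc(u)$ differs from $\msc(u)$ only by adding an arbitrary scalar $a$ to $\msc_{2n-1}$ modulo $\msc_1,\ldots,\msc_{2n-3}$. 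Applying $\wt\nu_h$ to $\wt P=0$ and varying $a$ contradicts minimality. This yields $\mcX\mcB(\mcG)\cong\mcZ\mcX(\mcG)\ot\wt{\mcB}(\mcG)$.

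Finally, I would show that the quotient map $q:\mcX\mcB(\mcG)\onto\mcB(\mcG)=\mcX\mcB(\mcG)/(\msc(u)-1)$ restricts to an isomorphism $\wt{\mcB}(\mcG)\iso\mcB(\mcG)$. Since $q(\msv(u))=1$, we have $q(\tl\msS(u))=\msS(u)$, hence $q\circ\pi=\mathrm{id}_{\mcB(\mcG)}$; conversely $\pi\circ q|_{\wt{\mcB}(\mcG)}$ sends $\tl\mss_{ij}(u)\mapsto\mss_{ij}(u)\mapsto\tl\mss_{ij}(u)$, so $\pi$ and $q|_{\wt{\mcB}(\mcG)}$ are mutually inverse. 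Combining this with the decomposition of the previous step completes the proof. The hard part will be the algebraic independence argument: constructing $\wt\nu_h$ that shifts precisely one odd coefficient of $\msc(u)$ by an arbitrary constant, while respecting $h(u)h(\ka-u)=1$, takes some care, but mirrors the argument used for $\mcW\mcB(\mcG)$ in Theorem \ref{T:BGdecomp}.
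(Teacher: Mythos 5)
Your overall strategy matches the paper's: the heart of both arguments is the algebraic independence of the central generators over $\wt{\mcB}(\mcG)$, proved by exploiting the automorphisms $\wt\nu_h$ with $h(u)\,h(\ka-u)=1$, which fix $\wt{\mcB}(\mcG)$ pointwise and rescale $\msc(u)$ by $h^2(u)$. One genuine (and welcome) difference: you obtain the isomorphism $\wt{\mcB}(\mcG)\cong\mcB(\mcG)$ directly by constructing the section $\pi:\mcB(\mcG)\to\wt{\mcB}(\mcG)$, $\msS(u)\mapsto\msv^{-1}(u)\msX(u)$, and checking the reflection and symmetry relations for $\tl\msS(u)$ (your computation $Q\,\tl\msS_1(u)\,R(2u-\ka)\,\tl\msS_2^{-1}(\ka-u)=p(u)\,Q$ is correct and the proof of Theorem \ref{T:XRES} then applies verbatim); the paper instead deduces that isomorphism purely from the tensor decomposition, never constructing $\pi$. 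Your route makes the last claim of the theorem independent of the decomposition, which is a mild simplification.

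The one step that would fail as written is your candidate $h(u)=1+a\,u^{-(2n-1)}(\ka-u)^{-(2n-1)}+\cdots$. Any series of this shape is symmetric under $u\mapsto\ka-u$, so it cannot satisfy $h(u)\,h(\ka-u)=1$ unless it is trivial; moreover its leading correction sits in degree $2(2n-1)$, so it would perturb an \emph{even} coefficient of $\msc(u)$ rather than $\msc_{2n-1}$. The correct choice, used in the paper, is
\[
h(u)=\frac{1+a(\ka-u)^{-n}}{1+a u^{-n}}, \qquad n \text{ odd},
\]
which manifestly satisfies $h(u)\,h(\ka-u)=1$ and expands as $1-2a u^{-n}+\cdots$, so that $\wt\nu_h$ shifts $\msc_n$ (equivalently the $n$-th odd coefficient of $\msv$) by an arbitrary constant modulo lower-order coefficients. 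With that substitution your minimality argument goes through and the rest of the proof is sound.
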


\begin{proof}
Since the coefficients of the series $\msc(u)$ generate $\mcZ\mcX(\mcG)$, the same is true for the coefficients of $\msv(u)$. Consequently, since $\msx_{ij}(u)=\msv(u)\,\tl\mss_{ij}(u)$, it follows that $\mcX\mcB(\mcG)\cong \mcZ\mcX(\mcG)\cdot\wt\mcB(\mcG)$. Moreover if $h^{-1}(u) = h(\kappa-u)$ with constant term 1, the  algebra $\wt{\mcB}(\mcG)$ is a $\wt\nu_h$--stable subalgebra of $\mcX\mcB(\mcG)$. Indeed, $\wt\nu_h(\tl\mss_{ij}(u))=\wt\nu_h(\msv^{-1}(u))\,\wt\nu_h(\msx_{ij}(u))=h^{-1}(u)\,\msv^{-1}(u)\,h(u)\,\msx_{ij}(u)=\tl\mss_{ij}(u)$. The isomorphism  $\mcX\mcB(\mcG) \cong \mcZ\mcX(\mcG) \otimes \wt{\mcB}(\mcG)$ can now be proved via the same argument as in Theorem 3.1 in \cite{AMR} using instead of $1+au^{-n}$ the power series $(1+a(\kappa-u)^{-n})/(1+au^{-n})$ for an appropriate odd value of $n$ and any $a\in\C$. (When $n$ is odd, it is important that the first two terms of this power series are $1 - 2au^{-n}$: 
\[ 
\frac{1+a(\kappa-u)^{-n}}{1+au^{-n}} = \left( 1-\frac{au^{-n}}{(1-\kappa u^{-1})^{n}}\right) (1-au^{-n} + a^2 u^{-2n} - \cdots) = 1-2au^{-n} + \cdots 
\] 
The reason why $n$ should be odd is that the odd coefficients of $\msv(u)$ should be considered.) It follows that the quotient $\mcX\mcB(\mcG) \onto \mcB(\mcG)$ induces an isomorphism between $\wt{\mcB}(\mcG)$ and $\mcB(\mcG)$.  
\end{proof}

\begin{crl} \label{XX:PBW} 
Given any total ordering, a vector space basis of $\mcX\mcB(\mcG)$ is provided by the ordered monomials in the generators $\msc_1, \msc_3, \ldots$ and  $\msw_2, \msw_4, \ldots$ and $\bsi^{(r)}_{ij}$ with $r,i,j$ satisfying the same constraints as in Theorem~\ref{Y:PBW}.
\end{crl}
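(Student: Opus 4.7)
The plan is to assemble the tensor product decompositions already established in the preceding subsections. First I would apply Theorem~\ref{T:XS=ZX*S} to write $\mcX\mcB(\mcG) \cong \mcZ\mcX(\mcG) \otimes \wt{\mcB}(\mcG)$, where $\wt{\mcB}(\mcG)$ is identified with $\mcB(\mcG)$ via the quotient map. Then I would apply Theorem~\ref{T:BGdecomp} to decompose $\mcB(\mcG) \cong \mcW\mcB(\mcG) \otimes \wt{\mcU\mcB}(\mcG)$ and use the isomorphism $\phi : \wt{\mcU\mcB}(\mcG) \iso \wt{Y}(\mfg,\mcG)^{tw}$ from the same theorem. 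Together these yield
\[
\mcX\mcB(\mcG) \;\cong\; \mcZ\mcX(\mcG) \,\otimes\, \mcW\mcB(\mcG) \,\otimes\, \wt{\mcU\mcB}(\mcG),
\]
so that a vector space basis of $\mcX\mcB(\mcG)$ is obtained by tensoring PBW-type bases of the three factors.

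Next I would identify a basis of each factor separately. For $\wt{\mcU\mcB}(\mcG)$, I would transport the PBW basis of $\wt{Y}(\mfg,\mcG)^{tw}$ provided by Theorem~\ref{Y:PBW} through $\phi^{-1}$; since $\phi$ is a filtered isomorphism sending $\bsi_{ij}^{(r)}$ to $\si_{ij}^{(r)}$, the ordered monomials in the $\bsi_{ij}^{(r)}$ subject to the constraints of Theorem~\ref{Y:PBW} form a basis. For $\mcW\mcB(\mcG)$, the isomorphism $\phi$ identifies it with $ZX(\mfg,\mcG)^{tw}$, which by Corollary~\ref{C:ZX(g,G)}\,(\textit{3}) is the polynomial algebra on the algebraically independent generators $\msw_{2i}$ for $i\ge 1$; this gives the ordered-monomial basis in these generators.

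For the remaining factor $\mcZ\mcX(\mcG)$, I would show it is freely generated by the odd coefficients $\msc_1, \msc_3, \ldots$ of $\msc(u)$. Algebraic independence is exactly Corollary~\ref{C:52}. To see that these generate the whole centre, I would exploit the identity $\msc(\ka-u) = \msc^{-1}(u)$ derived just before Theorem~\ref{T:XS=ZX*S}. Expanding both sides as power series in $u^{-1}$ and comparing order by order, the coefficient of $u^{-(2k+1)}$ yields an identity automatically satisfied modulo lower-order relations, while the coefficient of $u^{-2k}$ expresses $\msc_{2k}$ as a polynomial in $\msc_1,\ldots,\msc_{2k-1}$. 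An induction on $k$ then exhibits each $\msc_{2k}$ as a polynomial in $\msc_1, \msc_3, \ldots$ alone, so $\mcZ\mcX(\mcG) \cong \C[\msc_1, \msc_3, \ldots]$.

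The main obstacle is this last step: a careful, order-by-order power-series manipulation to confirm that the functional equation $\msc(\ka-u) = \msc^{-1}(u)$ forces precisely the pattern just described, so that the odd coefficients form a free polynomial generating set rather than merely spanning $\mcZ\mcX(\mcG)$ modulo some non-trivial relations. Once this is established, combining the three bases via the tensor product decomposition above immediately yields the claimed PBW basis of $\mcX\mcB(\mcG)$.
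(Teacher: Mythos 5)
Your proposal is correct and follows exactly the route the paper intends (the corollary is stated without proof precisely because it is the assembly of Theorem \ref{T:XS=ZX*S}, Theorem \ref{T:BGdecomp}, Corollary \ref{C:52}, Corollary \ref{C:ZX(g,G)} and Theorem \ref{Y:PBW} that you describe). The "main obstacle" you flag is already resolved by the material in the paper: Corollary \ref{C:52} gives algebraic independence of the odd $\msc_{2k+1}$, and the functional equation $\msc(\ka-u)=\msc^{-1}(u)$ does yield, by comparing coefficients of $u^{-2k}$, that $2\msc_{2k}$ is a polynomial in the lower coefficients, so the induction you sketch goes through.
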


\subsection{Quantum contraction for reflection algebra} \label{Sec:52}

In this section, we define a certain series $\msd(u)$, the image of $\wt\gamma(\msc(u))$ in the algebra $\mcB(\mcG)$. We call this series the quantum contraction of the matrix $\msS(u)$ in an analogy to the quantum contraction $y(u)$ of the twisted Yangian in \cite{MNO}. We then show that $\msd(u)$ is an analogue of the series $\msw(u)$.

\begin{prop}
The following identity holds in the algebra $\mcB(\mcG)$:
\eq{
Q\,\msS^{-1}_1(-u)\,R(2u-\ka)\,\msS_2(u-\ka) = \msS_2(u-\ka)\,R(2u-\ka)\,\msS^{-1}_1(-u)\,Q = p(u)\,\msd(u)\,Q . \label{QSRS:2}
}
\end{prop}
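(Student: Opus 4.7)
The plan is to derive this identity by applying the automorphism $\wt\gamma$ of $\mcX\mcB(\mcG)$ to the defining identity \eqref{QSRS:1} for $\msc(u)$ and then pushing the result down to the quotient $\mcB(\mcG) \cong \mcX\mcB(\mcG)/(\msc(u)-1)$ of Corollary \ref{C:XB->B}. Recall from Section \ref{Sec:51} that $\wt\gamma : \msX(u)\mapsto \msX^{-1}(-u)$ is an algebra automorphism of $\mcX\mcB(\mcG)$; in particular, $\wt\gamma$ acts trivially on $Q$, on $R(2u-\ka)$ and on the scalar series $p(u)$, since these do not involve the generators of $\mcX\mcB(\mcG)$.

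Concretely, I would apply $\wt\gamma$ to both sides of
\eqn{
Q\,\msX_1(u)\,R(2u-\ka)\,\msX_2^{-1}(\ka-u) \;=\; \msX_2^{-1}(\ka-u)\,R(2u-\ka)\,\msX_1(u)\,Q \;=\; p(u)\,\msc(u)\,Q.
}
Using $\wt\gamma(\msX(v)) = \msX^{-1}(-v)$ and $\wt\gamma(\msX^{-1}(v)) = \msX(-v)$ with $v=u$ and $v=\ka-u$ respectively, this yields the identity
\eqn{
Q\,\msX_1^{-1}(-u)\,R(2u-\ka)\,\msX_2(u-\ka) \;=\; \msX_2(u-\ka)\,R(2u-\ka)\,\msX_1^{-1}(-u)\,Q \;=\; p(u)\,\wt\gamma(\msc(u))\,Q
}
inside $\End(\C^N)^{\ot 2}\ot \mcX\mcB(\mcG)[[u^{-1}]]$.

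Finally, I would invoke the surjection $\pi:\mcX\mcB(\mcG)\onto \mcB(\mcG)$ of Corollary \ref{C:XB->B}, under which $\msx_{ij}(u)\mapsto \mss_{ij}(u)$, that is $\msX(u)\mapsto \msS(u)$. By the very definition in the statement, $\pi(\wt\gamma(\msc(u)))=\msd(u)$. Applying $\pi$ entrywise to the previous identity gives exactly \eqref{QSRS:2}.

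There is no genuine obstacle in this argument: the only thing to check is that $\wt\gamma$ is compatible with the surrounding scalar factors, which is immediate since $Q$, $R(2u-\ka)$ and $p(u)$ lie outside $\mcX\mcB(\mcG)$. Note that $\wt\gamma$ need not descend to $\mcB(\mcG)$, but this is irrelevant: we only need to apply $\wt\gamma$ once at the level of $\mcX\mcB(\mcG)$ and then project, and the projection of the resulting identity is well-defined. This is precisely the setup that makes $\msd(u)$ a well-defined series in $\mcB(\mcG)[[u^{-1}]]$ and will allow it to play the role of a quantum contraction of $\msS(u)$ in the subsequent analysis.
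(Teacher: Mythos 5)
Your argument is exactly the paper's proof: apply the automorphism $\wt\gamma$ to each part of \eqref{QSRS:1} and take the image in $\mcB(\mcG)$, where $\msd(u)$ is by definition the image of $\wt\gamma(\msc(u))$. The extra details you supply (that $Q$, $R(2u-\ka)$ and $p(u)$ are untouched by $\wt\gamma$, and that $\wt\gamma$ need not descend to the quotient) are correct and only make explicit what the paper leaves implicit.
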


\begin{proof}
Apply the automorphism $\wt\gamma$ to each part of \eqref{QSRS:1} and take their image in the algebra $\mcB(\mcG)$.  
\end{proof}

\begin{thrm} \label{T:d(u)}
The coefficients of the quantum contraction $\msd(u)$ generate the whole centre $\mcZ\mcB(\mcG)$ of $\mcB(\mcG)$.
\end{thrm}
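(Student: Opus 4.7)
The plan is to establish the explicit functional identity
\[ \msw(u)\,\msd(u) = \msw(u-\ka) \]
in $\mcB(\mcG)[[u^{-1}]]$, and then to invert this relation so as to recover the $\msw_{2k}$ from the coefficients of $\msd(u)$. Since $\mcZ\mcB(\mcG) = \mcW\mcB(\mcG)$ is generated by the algebraically independent central elements $\msw_{2k}$, $k \geq 1$, by Corollaries \ref{C:ZX(g,G)} and \ref{C:ZB(G)}, this single identity simultaneously places the coefficients of $\msd(u)$ inside the centre and shows that they generate all of it.

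To derive the identity, use Proposition \ref{ssw}, $\msS(u)\,\msS(-u) = \msw(u)\cdot I$, in two guises: first as $\msS_1^{-1}(-u) = \msw(u)^{-1}\,\msS_1(u)$, and second (after the substitution $u \mapsto u - \ka$ together with the even-ness $\msw(-u) = \msw(u)$) as $\msS_2^{-1}(\ka - u) = \msw(u-\ka)^{-1}\,\msS_2(u-\ka)$, both interpreted as formal power series in $u^{-1}$. Substituting the first into the defining relation \eqref{QSRS:2} of $\msd(u)$ yields
\[ p(u)\,\msw(u)\,\msd(u)\,Q \;=\; Q\,\msS_1(u)\,R(2u-\ka)\,\msS_2(u-\ka). \]
On the other hand, \eqref{QSRS:1} reduces in $\mcB(\mcG)$ to $Q\,\msS_1(u)\,R(2u-\ka)\,\msS_2^{-1}(\ka-u) = p(u)\,Q$ because $\msc(u) = 1$ there by Corollary \ref{C:XB->B} (alternatively, one can transport Lemma \ref{L:QSRS} via the isomorphism $\phi$). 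Substituting the second expression for $\msS_2^{-1}(\ka-u)$ into this relation produces
\[ Q\,\msS_1(u)\,R(2u-\ka)\,\msS_2(u-\ka) \;=\; p(u)\,\msw(u-\ka)\,Q. \]
Comparing the two expressions for $Q\,\msS_1(u)\,R(2u-\ka)\,\msS_2(u-\ka)$ and cancelling the scalar $p(u)\,Q$ yields the claimed identity.

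For the inversion step, compare coefficients of $u^{-n}$ on both sides of $\msw(u)\,\msd(u) = \msw(u-\ka)$ after expanding $(u-\ka)^{-2k}$ as a power series in $u^{-1}$. The even-ness of $\msw(u)$ forces $\msd_1 = \msd_2 = 0$ and $\msd_3 = 2\ka\,\msw_2$; more generally, the coefficient of $u^{-(2k+1)}$ takes the form
\[ \msd_{2k+1} \;=\; 2k\,\ka\,\msw_{2k} + R_k, \]
where $R_k$ is a polynomial in the $\msw_{2l}$ and $\msd_{2j+1}$ with $l,j < k$. Since $\ka = N/2 \mp 1$ is nonzero in every case under consideration, this recursion inverts to express each $\msw_{2k}$ as a polynomial, with coefficients in $\C[\ka^{\pm 1}]$, in $\msd_3,\msd_5,\ldots,\msd_{2k+1}$. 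By induction, the coefficients of $\msd(u)$ generate the full subalgebra $\mcW\mcB(\mcG) = \mcZ\mcB(\mcG)$.

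I do not anticipate a serious obstacle: once the key substitution $\msS^{-1}(\ka-u) = \msw(u-\ka)^{-1}\msS(u-\ka)$ is in hand, both the identity and its inversion are essentially power-series bookkeeping. The only subtlety is to interpret every shifted argument $\msS(\pm u + \text{shift})$ and $\msw(\pm u + \text{shift})$ consistently as a formal series in $u^{-1}$, which is automatic because all the relevant series have invertible leading term.
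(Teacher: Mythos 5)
Your proof is correct, and it takes a genuinely different route from the paper's. The paper transports the defining identity \eqref{QSRS:2} through the isomorphism $\phi:\mcB(\mcG)\to X(\mfg,\mcG)^{tw}$, rewrites everything in terms of the $T$-matrices, and uses \eqref{TT} together with Lemma \ref{L:QPRP} to compute $d(u)=\phi(\msd(u))$ explicitly as a ratio of shifts of the series $q(u)$; it then concludes by appealing to the fact that the coefficients of $q(u)$ generate $ZX(\mfg,\mcG)^{tw}$. You instead stay entirely inside $\mcB(\mcG)$: combining \eqref{QSRS:1} (with $\msc(u)=1$) and \eqref{QSRS:2} with the unitarity-type relation $\msS(u)\msS(-u)=\msw(u)\cdot I$ of Proposition \ref{ssw} yields the clean intrinsic identity $\msw(u)\msd(u)=\msw(u-\ka)$, which is consistent with the paper's intermediate formula $d(u)=\dfrac{y(u-3\ka/2)\,y(-u+3\ka/2)}{y(u+\ka/2)\,y(-u-\ka/2)}=\dfrac{q(u-\ka)}{q(u+\ka)}=\dfrac{w(u-\ka)}{w(u)}$. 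Your approach buys two things: it avoids the $RTT$-computation altogether, and it makes explicit the inversion step (recovering $\msw_{2k}$ from $\msd_3,\msd_5,\dots,\msd_{2k+1}$ using $\ka\neq 0$) that the paper leaves implicit in the phrase "the same is true for the series $d(u)$" --- a step that genuinely requires $\ka\neq 0$ and is worth spelling out, as you do. The substitutions $\msS^{-1}(-u)=\msw(u)^{-1}\msS(u)$ and $\msS^{-1}(\ka-u)=\msw(u-\ka)^{-1}\msS(u-\ka)$ are legitimate formal-power-series manipulations (leading terms are invertible, and $\msw(u)$ is central so it passes freely through $R(2u-\ka)$ and $\msS_2$), and your coefficient extraction $\msd_1=\msd_2=0$, $\msd_{2k+1}=2k\ka\,\msw_{2k}+R_k$ checks out, so the appeal to Corollaries \ref{C:ZX(g,G)} and \ref{C:ZB(G)} completes the argument.
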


\begin{proof} 
Set $d(u) = \phi^{-1}(\msd(u))$ and let us apply the isomorphism $\phi: \mcB(\mcG) \to X(\mfg,\mcG)^{tw}$ to the left-hand side of \eqref{QSRS:2} to obtain
\eqa{
& Q\,T^t_1(u+\ka/2)^{-1}\,\mcG_1(u)\,T^{-1}_1(-u-\ka/2)\,R(2u-\ka)\,T_2(u-3\ka/2)\,\mcG_2(u-\ka)\,T_2^{t}(-u+3\ka/2) \el
& = Q\,T^t_1(u+\ka/2)^{-1}\,T_2(u-3\ka/2)\,\mcG_1(u)\,R(2u-\ka)\,\mcG_2(u-\ka)\,T^{-1}_1(-u-\ka/2)\,T_2^{t}(-u+3\ka/2) , \label{P36:1}
}
where we have used $\mcG^{-1}(-u)=\mcG(u)$ and the identity
$$
T^{-1}_1(-u-\ka/2)\,R(2u-\ka)\,T_2(u-3\ka/2) = T_2(u-3\ka/2)\,R(2u-\ka)\,T^{-1}_1(-u-\ka/2) ,
$$
which is obtained by taking the inverse of \eqref{RTT}, multiplying both sides with $T_2(v)$ and substituting $u\to -u-\ka/2$, $v\to u-3\ka/2$. Now recall that $Q\,T_2^{t}(u)=Q\,T_1(u)$. Then, by \eqref{TT}, it follows that \eqref{P36:1} is equal to
$$
p(u)\,\frac{y(u-3\ka/2)\,y(-u+3\ka/2)}{y(u+\ka/2)\,y(-u-\ka/2)}\,Q = p(u)\,\frac{q(u-\ka)}{q(u)}\,Q ,
$$
which yields, after comparing with the right-hand side of \eqref{QSRS:2} and using the symmetry $q(u-\ka)=q(-u)$,
\eq{
d(u) = \frac{q(-u)}{q(u)}  . \label{c=q/q}
}
Now since the coefficients of $q(u)$ generate the whole centre $ZX(\mfg,\mcG)^{tw}$, the same is true for the series $d(u)$ and, by the isomorphism $\phi$, for $\msd(u)$.
\end{proof}

\begin{crl}
We have $\mcX\mcB(\mcG) \cong \mcZ\mcX(\mcG) \ot \mcZ\mcB(\mcG) \ot \mcU\mcB(\mcG)$ and $\mcZ\mcX(\mcG) \ot \mcZ\mcB(\mcG)$ is the centre of $\mcX\mcB(\mcG)$.
\end{crl}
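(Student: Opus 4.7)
The plan is to chain together the decomposition theorems already established for the various algebras in the paper, then use triviality of the centre of the ordinary twisted Yangian to identify the centre of $\mcX\mcB(\mcG)$.

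First, by Theorem~\ref{T:XS=ZX*S} the extended reflection algebra decomposes as
\[
\mcX\mcB(\mcG) \;\cong\; \mcZ\mcX(\mcG)\ot \wt{\mcB}(\mcG),
\]
and the quotient map $\mcX\mcB(\mcG)\onto\mcB(\mcG)$ (by $\msc(u)=1$) restricts to an isomorphism $\wt{\mcB}(\mcG)\iso\mcB(\mcG)$. Next, Theorem~\ref{T:BGdecomp} together with Corollary~\ref{C:ZB(G)} gives
\[
\mcB(\mcG)\;\cong\;\mcW\mcB(\mcG)\ot\wt{\mcU\mcB}(\mcG)\;\cong\;\mcZ\mcB(\mcG)\ot\mcU\mcB(\mcG),
\]
where in the second step we use that $\mcW\mcB(\mcG)$ is the full centre $\mcZ\mcB(\mcG)$ and that the quotient by the unitarity ideal restricts to an isomorphism $\wt{\mcU\mcB}(\mcG)\iso\mcU\mcB(\mcG)$. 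Splicing these two decompositions yields the first claimed isomorphism
\[
\mcX\mcB(\mcG)\;\cong\;\mcZ\mcX(\mcG)\ot\mcZ\mcB(\mcG)\ot\mcU\mcB(\mcG).
\]

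For the identification of the centre, I would argue as follows. The factor $\mcZ\mcX(\mcG)$ lies in the centre of $\mcX\mcB(\mcG)$ by Theorem~\ref{T:c(u)}. The factor $\mcZ\mcB(\mcG)\cong\mcW\mcB(\mcG)$ is generated by the coefficients of $\msw(u)$, which under the isomorphism $\phi$ of Theorem~\ref{T:BGdecomp} correspond to $w(u)=z(-u-\ka/2)z(u-\ka/2)$; these lie inside $ZX(\mfg)$ and, transported back through the chain of isomorphisms, commute with every element of $\mcX\mcB(\mcG)$, so $\mcZ\mcB(\mcG)$ is also central in $\mcX\mcB(\mcG)$. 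Thus $\mcZ\mcX(\mcG)\ot\mcZ\mcB(\mcG)$ is contained in the centre of $\mcX\mcB(\mcG)$. For the reverse containment, observe that the centre of a tensor product of $\C$-algebras is the tensor product of their centres, so
\[
Z\bigl(\mcX\mcB(\mcG)\bigr)\;\cong\;\mcZ\mcX(\mcG)\ot\mcZ\mcB(\mcG)\ot Z\bigl(\mcU\mcB(\mcG)\bigr).
\]
By Theorem~\ref{T:Y(g,G)} we have $\mcU\mcB(\mcG)\cong Y(\mfg,\mcG)^{tw}$, and Corollary~\ref{C:ZX(g,G)}\,(\textit{1}) states that the centre of $Y(\mfg,\mcG)^{tw}$ is trivial. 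This collapses the rightmost factor, producing the desired equality $Z(\mcX\mcB(\mcG))=\mcZ\mcX(\mcG)\ot\mcZ\mcB(\mcG)$.

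There is no real obstacle here: every ingredient has been set up in the preceding theorems and corollaries. The only delicate point worth double-checking is the standard fact that $Z(A\ot B\ot C)=Z(A)\ot Z(B)\ot Z(C)$ in the required generality, which holds because we are tensoring over $\C$ and the algebras involved all admit the PBW-type bases established in Theorem~\ref{Y:PBW} and Corollaries~\ref{X:PBW} and \ref{XX:PBW}, guaranteeing freeness of the tensor factors.
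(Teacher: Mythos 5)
Your proposal is correct and follows essentially the same route as the paper, whose proof is simply the citation of Theorems \ref{T:XS=ZX*S} and \ref{T:BGdecomp}: you chain the decompositions $\mcX\mcB(\mcG)\cong\mcZ\mcX(\mcG)\ot\mcB(\mcG)$ and $\mcB(\mcG)\cong\mcZ\mcB(\mcG)\ot\mcU\mcB(\mcG)$, then identify the centre via $Z(A\ot B)=Z(A)\ot Z(B)$ and the triviality of the centre of $Y(\mfg,\mcG)^{tw}\cong\mcU\mcB(\mcG)$ from Corollary \ref{C:ZX(g,G)}. The only difference is that you make explicit the centre computation that the paper leaves implicit; your appeal to the standard fact about centres of tensor products over $\C$ is fine (it needs no PBW input, only linear algebra over a field).
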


\begin{proof}
This follows from Theorem \ref{T:XS=ZX*S} and Theorem \ref{T:BGdecomp}.
\end{proof}



\end{document}